\theoremstyle{plain}
\newtheorem{prop}{Proposition}[section]
\newtheorem{lemma}[prop]{Lemma}
\newtheorem{thm}[prop]{Theorem}
\newtheorem{rem}[prop]{Remark}
\newtheorem*{cond}{Condition}
\theoremstyle{definition}
\newtheorem{defi}{Definition}
\newcommand{\abs}[1]{\ensuremath{\left|#1\right|}}
\newcommand{\beq}{\begin{equation}}
\newcommand{\eeq}{\end{equation}}
\newcommand{\beqs}{\begin{equation*}}
\newcommand{\eeqs}{\end{equation*}}
\newcommand{\na}{\nabla}
\newcommand{\R}{\mathbb{R}}
\newcommand{\N}{\mathbb{N}}
\newcommand{\ov}{\overline}
\newcommand{\bea}{\begin{equation}\begin{array}{rcl}}
\newcommand{\eea}{\end{array}\end{equation}}
\newcommand{\beas}{\begin{equation*}\begin{array}{rcl}}
\newcommand{\eeas}{\end{array}\end{equation*}}
\newcommand{\norm}[1]{\ensuremath{\left\|#1\right\|}}
\def\eps{\varepsilon}
\def\phi{\varphi}
\begin{document}
\title{Selfsimilar expanders of the harmonic map flow}
\author{Pierre Germain and Melanie Rupflin}
\begin{abstract} We study the existence, uniqueness, and stability of self-similar expanders of the harmonic map heat flow in equivariant settings. We show that there exist selfsimilar solutions to any admissiable initial data and that their uniqueness and stability properties are essentially determined by the energy-minimising properties of the so called equator maps.
\end{abstract}
\date{\today}
\maketitle

\section{Introduction}

\subsection{The harmonic map heat flow and its solutions}
The harmonic map heat flow is defined as the negative gradient flow of the Dirichlet energy of maps between manifolds. For a map $u(x,t)$ from $\mathbb{R}^d\times [0,\infty) $ to a manifold $N$, which we see as embedded in some Euclidean space with second fundamental form $\Gamma$, this equation reads
$$
\left\{ \begin{array}{l}
\partial_tu-\Delta u=\Gamma(u)(\na u,\na u) \quad \text{ on } \R^d\times [0,\infty) \\
u(t=0) = u_0. \end{array} \right.
$$
Choosing $u_0 \in H^1$ (finite energy data), Struwe~\cite{Struwe85}, Chen~\cite{Chen} (see also Rubinstein, Sternberg and Keller~\cite{RSK}), and Chen-Struwe~\cite{Chen-Struwe} were able to build up weak solutions. In the critical dimension $d=2$ the question of uniqueness of weak solutions has been analysed by Freire \cite{Freire}, Topping \cite{Topping}, Bertsch, dal Passo and Van der Hout \cite{Bertsch} and the second author of this paper \cite{Rupflin}. On the other hand, the question of  uniqueness is still open in the supercritical dimensions $d\geq 3$ that we consider here. On the one hand, examples of non-uniqueness have been obtained by Coron \cite{Coron} and Hong \cite{Hong}. On the other hand uniqueness can be obtained by working at the scaling of the equation: Koch and Lamm~\cite{Koch-Lamm} proved local well-posedness for data which are close in $L^\infty$ to a uniformly continuous map; Wang~\cite{Wang2} obtained local well-posedness for data small enough in $BMO$; finally Lin and Wang~\cite{Lin-Wang} showed uniqueness in $\mathcal{C}([0,T],W^{1,n})$.

\subsection{Equivariant setting}
We shall assume that the target manifold is spherically symmetric, more precisely that it admits coordinates $(s,\omega) \in \mathbb{R} \times \mathbb{S}^{n-1}$ in which its metric reads  $ds^2 + g^2(s)d\omega^2$. We shall further assume that the solution map is equivariant, namely in these coordinates $u(t,x) = \left( h(t,|x|), \chi\left( \frac{x}{|x|} \right) \right)$, where $\chi$ is a $k$-eigenmap, see section~\ref{sotr} for the details. Then the above equation reduces to a scalar one:
$$ 
\left\{ \begin{array}{l}
h_t-h_{rr}-\frac{d-1}{r}h_r+\frac{k}{r^2}[gg'](h)=0 \\
h(t=0) = h_0.
\end{array} \right.
$$
The archetype of such a situation is of course $N$ being the $d$ dimensional sphere, in which case $g = \sin$, $k=1$, $\chi = Id$, and the ansatz reads $u(t,x) = \left( h(t,|x|), \frac{x}{|x|} \right)$. The equator of the sphere corresponds to the solution $h \equiv \frac{\pi}{2}$; it is a trivial solution of the harmonic map heat flow. In our more general equivariant framework, an equator of a rotationally symmetric manifold is a lateral sphere of $N$ with locally maximal diameter; it corresponds to the constant in time solution of the harmonic map flow given by $h \equiv s^\star$, $s^\star$ a local maximum of $g^2$. 

\subsection{Obtained results: existence and uniqueness of self-similar solutions}
We investigate the above equation with data of the type $h \equiv s\in\R$; the expected solutions are self-similar, i.e.~of the type 
$$
h(x,t) = \psi \left( \frac{|x|}{\sqrt{t}} \right).
$$
We first establish (in theorem~\ref{THM1}) the existence of such a self-similar profile for any $s$. The next question is that of uniqueness; roughly speaking, we are able to prove the equivalence of the two following statements (see theorems~\ref{THM2} and~\ref{THM1} for the details):
\begin{itemize}
\item For any given $s$, there exists a unique self-similar profile.
\item The equator map $h \equiv s^\star$ minimises the Dirichlet energy on the unit ball among all functions in the same equivariance class and with prescribed value $h = s^\star$ on the boundary of the ball.
\end{itemize}
This equivalence stated above can be established either by ODE, or variational methods; we follow both paths, which yield complementary results. We would like to mention that parts of the above result were known to Angenent, Ilmanen and Velazquez (unpublished work, announced in \cite{Ilmanen}). Also, Biernat and Bizon~\cite{BBizon} obtained numerical and analytical results for the above problem.

\subsection{Implications for the uniqueness of solutions to the Cauchy problem} The self-similar solutions we consider are (locally in space) of finite energy; actually, they barely miss the conditions for which uniqueness or local well-posedness was stated above, thus proving the optimality of our results. 

Another non-uniqueness result for harmonic maps from $\mathbb{R}^3$ to the sphere is due to Coron~\cite{Coron}, see also Hong~\cite{Hong}. These arguments are
are more indirect and lead to only two (genuinely) different solutions, as opposed to ours, which yield a precise description of the non-unique solutions and a large number of genuinely different solutions. Though Coron's approach is very different from ours, both, interestingly enough, rely on the energy-minimising properties of certain harmonic maps.

Lastly, incoming self-similar solutions $u(x,t)=v(\frac{x}{\sqrt{-t}})$, $t<0$ have drawn a lot of attention, since they provide instances of singularity formation, or blow up, from smooth data: see Ilmanen~\cite{Ilmanen}, Fan~\cite{Fan} and Gastel~\cite{Gastel}. A combination of their results and ours yields, in some cases, non-unique continuations after the blow up time. Biernat and Bizon~\cite{BBizon} studied the question of continuation if the blow up forms along a certain profile which is numerically stable; 
they gathered analytic and numerical evidence for unique continuation in that case.

\subsection{Related results: wave maps and nonlinear heat equation}
A result similar to the one above is known for the wave map equation: see Shatah~\cite{Shatah}, Cazenave, Shatah and Tahvildar Zadeh~\cite{CSTZ}, and the first author of the present article~\cite{Germain}.

The equivalence stated above is also reminiscent of the situation for the nonlinear heat equation with power nonlinearity:
$$
\left\{ \begin{array}{l}
\partial_t v-\Delta v=|v|^\alpha v \quad \text{ on } \R^d\times [0,\infty) \\
v(t=0) = v_0. \end{array} \right.
$$
In the supercritical range, i.e.~ for $\alpha > \frac{4}{d-2}$, the equation with self-similar data $v_0 = \frac{\ell}{|x|^\alpha}$ yields self-similar solutions $v(t,x) = t^{-1/\alpha} \psi \left( \frac{r}{\sqrt t} \right)$. With respect to the issue of uniqueness it turns out that there are deep analogies between this equation and the harmonic map heat flow: the analog of the equator map is the stationary solution $\frac{\beta}{|x|^{2/\alpha}}$, with $\beta = \left( \frac{2}{\alpha} \left( d-2-\frac{2}{d} \right) \right)^{1/\alpha}$, and it is stable if and only if self-similar solutions are unique: this is the case if $\alpha > \frac{4}{d-4-2 \sqrt{d-1}}$. For this and related results we refer to \cite{HW, Weissler, PTW, EK, GV, SW, Naito, Naito2}.

\subsection{Obtained results: stability of self-similar solutions}
In theorems~\ref{chien},~\ref{thm:lin-stab}, and~\ref{THM3}, we examine the stability of our self-similar solutions, with respect to small perturbations of the data at time 0, and at time 1. We are not able to give a complete picture, but we can characterise to a large extent stable and unstable settings. The methods employed are spectral (in particular the analysis of Sturm-Liouville problems) for the linearised problem, and resort to nonlinear analysis for the full equation.

\section{Statement of the results}

\label{sotr}

\subsection{The problem under study}
We consider selfsimilar weak solutions of the harmonic map heat flow 
\begin{equation}
\label{HF}
\partial_tu-\Delta u=\Gamma(u)(\na u,\na u) \quad \text{ on } \R^d\times [0,\infty)
\end{equation}
from Euclidean space $\R^d$ into a smooth target manifold $N$.

\bigskip
 We focus here on expanding selfsimilar solutions
$$u(x,t)=v(\textstyle{\frac{x}{\sqrt{t}}}),\quad t>0,\, x\in\R^d$$
for a suitable map $v:\R^d\to N$. By the translation invariance of \eqref{HF} these maps represent all solutions of \eqref{HF} which are selfsimilar in forward time up to translations in space-time. Such solutions in the natural energy-space 

\begin{equation}
 \label{space}
(u,u_t)\in L_{loc}^\infty([0,\infty))\dot{H}_{loc}^1(\R^d)\times L^2_{loc}(\R^d\times [0,\infty))
\end{equation}
of \eqref{HF} exist only in supercritical dimensions $d\geq 3$.

These self-similar maps correspond to data which are homogeneous of degree 0
\begin{equation}
\label{IV}
u(t=0)(x)=u_0(\textstyle{\frac{x}{\abs{x}}}),\quad x\in\R^d\setminus\{0\}.
\end{equation}
Our aim in the present article will be to understand the existence, uniqueness, and stability properties of the Cauchy problem for \eqref{HF} with homogeneous data.

\subsection{Geometric setting}
\label{setting}
We consider maps from a fixed Euclidean space $\R^d$, $d\geq 3$, into a smooth rotationally symmetric target manifold $N^n$ without boundary. We introduce coordinates $(s,\omega) \in \R\times \mathbb{S}^{n-1}$ on $N$ in which the metric is given by
\begin{equation*}
ds^2+g^2(s)d\omega_{n-1}^2.
\end{equation*}
Here $d\omega_{n-1}^2$ denotes the standard metric of the sphere $\mathbb{S}^{n-1}$ and $g$ shall be a smooth function, symmetric with respect to each point $p$ where $g(p)=0$. For these special values $p$ of the lateral coordinate which represent the poles of $N$, it is necessary to assume that $\abs{g'(p)}=1$ in order to obtain a smooth manifold. The coordinate $s$ and the function $g$ are of course periodic if $N$ is compact.

Observe that the (intrinsic) diameter of the \textit{lateral sphere}
$$C_s:=\{(s,\omega):\, \omega\in \mathbb{S}^{d-1}\},\quad s\in\R$$ is equal to $\pi\abs{g(s)}$. 
We therefore call $C_{s^\star}$ an \textit{equator} of $N$ if $s^\star$ is a local maximum of $g^2$. Similarly, we call a lateral sphere whose diameter is locally minimal but positive a \textit{minimal sphere}.

We consider for the moment both compact and non-compact target manifolds $N$, but we want to assume throughout this work that 
\begin{equation}
 \sup_{s\in \R} \abs{\frac{d^2}{ds^2}(g^2)(s)}+\frac{g^2(s)}{1+s^2}<\infty.
\label{g}
\end{equation}
For simplicity, we also exclude targets for which $g'$ has roots with multiplicity greater than one or for which the function $s\mapsto \frac{d^2}{ds^2}(g^2)(s)$ is constant on an interval of positive length.
\\

We consider maps from $\R^d$ to $N$ with the following type of symmetry.

\begin{defi} Let $d, n\in \N$.
\begin{enumerate}
 \item[(i)] We call a map $\chi:\mathbb{S}^{d-1}\to \mathbb{S}^{n-1}$ a \textit{($k$-)eigenmap}, if $\chi$ is an eigenfunction of the negative Laplacian $-\Delta_{\mathbb{S}^{d-1}}$ with constant energy density
\begin{equation*}
\abs{\na\chi}^2=k.
\end{equation*}
\item[(ii)] Let $N^n$ be a rotationally symmetric manifold and let $\chi:\mathbb{S}^{d-1}\to \mathbb{S}^{n-1}$ be an eigenmap. 
We say that a map $u:\R^d\to N^n$ is \textit{$\chi$-equivariant} if there exists a function $h:[0,\infty)\to \R$ such that 
$$u(x)=R_\chi h(x):=(h(\abs{x}), \chi(\textstyle{\frac{x}{\abs{x}}}))$$
with respect to the rotationally symmetric coordinates introduced on $N$.
\end{enumerate}
\end{defi}

The equation~\eqref{HF} becomes in equivariant coordinates
\begin{equation}
\label{equiv}
h_t-h_{rr}-\frac{d-1}{r}h_r+\frac{k}{r^2}G(h)=0
\end{equation}
(where $G:=gg'$), see Lemma~\ref{lemma:equi}.

Let us remark that the spectrum of the negative Laplacian on the sphere $\mathbb{S}^{d-1}$
$$\{l(d-2+l):\, l\in\N\}$$
contains no eigenvalues smaller than $d-1$. An example of a $(d-1)$-eigenmap is of course the identity $id:\mathbb{S}^{d-1}\to \mathbb{S}^{d-1}$ with the corresponding equivariant maps being the corotational maps $x\mapsto (h(\abs{x}),\frac{x}{\abs{x}})$.
The components of general eigenmaps with eigenvalue $\lambda_l=l(d-2+l)$ are given by the restriction of $l$-homogeneous, harmonic polynomials to the sphere, see \cite{Eells+Rato}, chapter VIII.
\\

\subsection{Equator maps and their minimising properties}

Given any equator $C_{s^\star}$ of $N$ and any eigenmap $\chi$, we define the corresponding \textit{equator map} by
$$u^\star=u_{\chi,s^\star}^\star:=R_\chi h^\star $$
for the constant function $h^\star\equiv s^\star$.
Note that this equator map and its properties depend both on the eigenmap $\chi$ and on the value of $s^\star$.

\begin{defi}
\label{def1}
Let $C_{s^\star}$ be an equator of a rotationally symmetric manifold $N$ and let $\chi$ be an eigenmap. 
We say that the equator map $u_{\chi,s^\star}^\star$ is \textit{$\chi$-energy-minimising} if it minimises the Dirichlet energy 
$$E(u,B_1(0))=\frac12 \int_{B_1(0)}\abs{\na u}^2 dx$$
in the set  
\begin{equation*}
\mathcal F_{\chi,s^\star}:=\{R_\chi h:\,\, h:[0,1]\to \R \text{ with } h(1)=s^\star\} 
\end{equation*}
of $\chi$-equivariant functions with the same boundary data. 
\end{defi}
Notice that we do not demand that the equator map $u_{\chi,s^\star}^\star$ be energy-minimising in the larger class of maps 
$$F=\{u\in H^1(B_1,N):\, u\vert_{\partial B_1}=u_{\chi,s^\star}^\star\vert_{\partial B_1}\}.$$ 
We cannot exclude the possibility of symmetry breaking in the sense that 
$$\inf_{v\in F} E(v,B_1)< \inf_{v\in \mathcal F_{\chi,s^\star}} E(v,B_1).$$
An example for such an occurrence in a related context of $G$-equivariant harmonic map was given by Gastel \cite{Gastel-G-harmonic} based on the analysis of singularities by Brezis, Coron and Lieb \cite{Brezis-Coron-Lieb}. 
\\

The following proposition provides a simple criterion to test whether or not a given equator map is $\chi$-energy-minimising.
\begin{prop}
\label{Prop3.1}
Let $d\geq 3$, let $N^n$ be a smooth, rotationally symmetric manifold and let $\chi:\mathbb{S}^{d-1}\to \mathbb{S}^{n-1}$ be a $k$-eigenmap. Let $C_{s^\star}$ be an equator of $N$ and recall $G:=g\cdot g'$.
\begin{enumerate}
\item[(i)] If
\begin{equation}
-4kG'(s^\star) < (d-2)^2,
\end{equation}
the equator map is \underline{locally} (i.e. for small perturbations) $\chi$-energy-minimising.
\item[(ii)] If 
\begin{equation}
\label{Theta}
-4kG'(s^\star)> (d-2)^2,
\end{equation}
the equator map $u_{\chi,s^\star}^\star$ is not (locally) $\chi$-energy-minimising.
\item[(iii)] Suppose that 
\begin{equation*}
-4kG'(s)\leq (d-2)^2 \text{ for } s\in[s^\star-S,s^\star+S]
\end{equation*}
where $S:=\frac{2\sqrt{k}}{d-2}\cdot \norm{g}_\infty$. Then $u_{\chi,s^\star}^\star$ is \underline{globally} $\chi$-energy-minimising.
\end{enumerate}
\end{prop}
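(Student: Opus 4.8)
\emph{Reduction to a scalar problem.} The plan is to reduce the question to a one-dimensional variational problem on $[0,1]$ and to compare the resulting potential term with the sharp Hardy inequality on $B_1$. For $u=R_\chi h$ one has $\abs{\na u}^2=h_r^2+\tfrac{k}{r^2}g^2(h)$, so that, up to the positive constant $\abs{\mathbb{S}^{d-1}}/2$, minimising $E(\cdot,B_1)$ over $\mathcal F_{\chi,s^\star}$ is the same as minimising
$$\mathcal E(h):=\int_0^1\Big(h_r^2+\tfrac{k}{r^2}g^2(h)\Big)r^{d-1}\,dr$$
over $h$ with $h(1)=s^\star$; the equator map is $h\equiv s^\star$, with $\mathcal E(s^\star)=\tfrac{k}{d-2}g^2(s^\star)$. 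Writing a competitor (which we may assume of finite energy, so that $\phi$ lies in the radial part of $H_0^1(B_1)$) as $h=s^\star+\phi$ with $\phi(1)=0$ and using $h_r=\phi_r$, everything reduces to the sign of
$$\mathcal E(h)-\mathcal E(s^\star)=\int_0^1\phi_r^2\,r^{d-1}\,dr+k\int_0^1\big(g^2(s^\star+\phi)-g^2(s^\star)\big)r^{d-3}\,dr.$$
The only analytic input will be the sharp weighted Hardy inequality $\int_0^1\phi_r^2 r^{d-1}\,dr\ge\big(\tfrac{d-2}{2}\big)^2\int_0^1\phi^2 r^{d-3}\,dr$ for $\phi\in H_0^1(B_1)$, together with the classical facts that the constant $(d-2)^2/4$ is optimal and is \emph{not} attained (indeed it is already the best constant on smooth radial functions compactly supported in the interior of $B_1$).

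\emph{The global case (iii).} I would do this first, as it is the cleanest. I claim the hypothesis implies the \emph{global} bound $g^2(s)-g^2(s^\star)\ge-\tfrac{(d-2)^2}{4k}(s-s^\star)^2$ for every $s\in\R$. On $I:=[s^\star-S,s^\star+S]$ the condition $-4kG'(s)\le(d-2)^2$ says exactly that $f(s):=g^2(s)+\tfrac{(d-2)^2}{4k}s^2$ is convex; since $G(s^\star)=0$ we have $f'(s^\star)=\tfrac{(d-2)^2}{2k}s^\star$, and the supporting-line inequality $f(s)\ge f(s^\star)+f'(s^\star)(s-s^\star)$ rearranges precisely to the claimed bound for $s\in I$. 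For $s\notin I$ one has $\abs{s-s^\star}>S$, hence
$$-\tfrac{(d-2)^2}{4k}(s-s^\star)^2<-\tfrac{(d-2)^2}{4k}S^2=-\norm{g}_\infty^2\le-g^2(s^\star)\le g^2(s)-g^2(s^\star),$$
where the precise value $S=\tfrac{2\sqrt k}{d-2}\norm g_\infty$ is used exactly so that $\tfrac{(d-2)^2}{4k}S^2=\norm g_\infty^2\ge g^2(s^\star)$. Inserting this pointwise bound into the identity above and applying Hardy gives $\mathcal E(h)-\mathcal E(s^\star)\ge\int_0^1\phi_r^2 r^{d-1}\,dr-\tfrac{(d-2)^2}{4}\int_0^1\phi^2 r^{d-3}\,dr\ge0$, i.e. global $\chi$-minimality. (In the degenerate case $\norm g_\infty=\infty$ the hypothesis forces $f$ to be globally convex and the supporting-line bound already holds for all $s$.)

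\emph{The local statements (i) and (ii).} For (i), Taylor expansion of $g^2$ at $s^\star$ (using $G(s^\star)=0$ and $\tfrac12(g^2)''(s^\star)=G'(s^\star)$, together with smoothness of $g$ near $s^\star$) gives $g^2(s^\star+\phi)-g^2(s^\star)=G'(s^\star)\phi^2+O(\abs\phi^3)$, so for $\norm\phi_\infty$ small
$$\mathcal E(h)-\mathcal E(s^\star)\ge\int_0^1\phi_r^2 r^{d-1}\,dr+kG'(s^\star)\int_0^1\phi^2 r^{d-3}\,dr-C\norm\phi_\infty\int_0^1\phi^2 r^{d-3}\,dr.$$
Since $-4kG'(s^\star)<(d-2)^2$, i.e. $kG'(s^\star)>-(d-2)^2/4$, Hardy bounds the first two terms below by $\delta\int_0^1\phi_r^2 r^{d-1}\,dr$ for some $\delta>0$, and Hardy again absorbs the cubic error once $\norm\phi_\infty$ is small enough; hence $\mathcal E(h)\ge\mathcal E(s^\star)$ for all competitors $L^\infty$-close to $s^\star$ (the same argument works in stronger topologies). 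For (ii), since $(d-2)^2/4$ is the optimal Hardy constant but is not attained, there is a smooth radial $\phi$ compactly supported in the interior of $B_1$ with $\int_0^1\phi_r^2 r^{d-1}\,dr<-kG'(s^\star)\int_0^1\phi^2 r^{d-3}\,dr$; then $h_t:=s^\star+t\phi$ satisfies $h_t(1)=s^\star$ and, by Taylor expansion, $\mathcal E(h_t)-\mathcal E(s^\star)=t^2\big(\int_0^1\phi_r^2 r^{d-1}\,dr+kG'(s^\star)\int_0^1\phi^2 r^{d-3}\,dr\big)+O(t^3)<0$ for $t$ small, while $\norm{h_t-s^\star}$ is arbitrarily small in any norm, so the equator map is not locally $\chi$-minimising.

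\emph{Where the work is.} The energy reduction and the Taylor/Hardy bookkeeping are routine; the one genuinely clever point --- and where I expect the real work --- is in (iii): recognising the hypothesis as convexity of $g^2+\tfrac{(d-2)^2}{4k}s^2$ on $I$ and checking that the value of $S$ is exactly what is needed to propagate the supporting-line estimate from $I$ to all of $\R$ via the trivial bound $g^2\ge0$. A secondary subtlety is the step in (ii) from ``the second variation has a negative direction'' to ``there is an admissible nearby competitor of strictly smaller energy'', which is legitimate only because the near-optimiser for Hardy can be chosen smooth and compactly supported, so that the cubic remainder is harmless.
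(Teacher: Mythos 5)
Your proposal is correct and follows the paper's own strategy. The core of the paper's proof of (iii) is exactly the pointwise bound $k\bigl[g^2(s)-g^2(s^\star)\bigr]\geq -\tfrac{(d-2)^2}{4}(s-s^\star)^2$ for all $s$, obtained on $[s^\star-S,s^\star+S]$ from a quadratic Taylor expansion of $g^2$ with Lagrange remainder and observed to be trivial outside that interval (because $\tfrac{(d-2)^2}{4k}S^2=\|g\|_\infty^2\geq g^2(s^\star)$); this is then fed into the sharp Hardy inequality $\int_0^1 w^2 r^{d-3}\,dr\leq\tfrac{4}{(d-2)^2}\int_0^1|w'|^2r^{d-1}\,dr$ for $w\in H^1_{rad}(B_1)$, $w(1)=0$. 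Your convexity/supporting-line reformulation of the on-interval step is equivalent to the paper's Taylor-with-remainder argument, and your remark that the case $\|g\|_\infty=\infty$ reduces to global convexity is a small but welcome clarification that the paper leaves implicit. For parts (i) and (ii) the paper merely records that they ``follow directly from Hardy's inequality'' and defers to a reference; your second-variation argument — expanding $g^2$ to third order at $s^\star$ to absorb the error term for $L^\infty$-small perturbations in (i), and using that the optimal Hardy constant is approached but not attained on smooth functions compactly supported in the interior of $B_1$ to produce an admissible competitor of strictly lower energy in (ii) — supplies exactly those missing details and is the standard way to carry them out.
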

Applying the above criterion to the case where the target manifold is the sphere $\mathbb{S}^{d}$ with the standard metric $ds^2 + \sin(s)^2d\omega_{d-1}^2$, and the maps are corotational $\chi=Id$, gives the well-known result: the equator map $R_{\chi} \frac{\pi}{2}$ is energy-minimising if and only if $d\geq 7$.

\subsection{Existence and uniqueness results}

We are able to prove existence of solutions to \eqref{HF} for homogeneous data in essentially all cases; notice that we are dealing with infinite energy solutions, thus the existence theorems by Chen \cite{Chen} and Chen and Struwe \cite{Chen-Struwe} do not apply here.
The question of uniqueness is much more interesting; roughly speaking, we shall prove that solutions to the Cauchy problem for \eqref{HF} with homogeneous data are unique if and only if the equator map is energy-minimising. More precise formulations of this idea are contained in the two following theorems.

\bigskip

We begin with a very general setting $(N,\chi)$, where the equator map is not $\chi$-energy-minimising. 
\begin{thm}
\label{THM2}
Let $d\geq 3$, let $N^n$ be a rotationally symmetric manifold such that \eqref{g} is satisfied and let $\chi:\mathbb{S}^{d-1}\to \mathbb{S}^{n-1}$ be a fixed eigenmap. Assume that $N$ has an equator map $u_{\chi, s^\star}^\star$ which is not $\chi$-energy-minimising.

Then there exists a selfsimilar and $\chi$-equivariant weak solution $u\in H_{loc}^1(\R^d\times [0,\infty))$ of the initial value problem \eqref{HF}, \eqref{IV} that is not constant in time for the initial data $u_0=u_{\chi, s^\star}^\star$.
\end{thm}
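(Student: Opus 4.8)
The plan is to reduce \eqref{HF} to an ODE for the self-similar profile and to produce a nonconstant solution by minimising a truncated Gaussian-weighted energy, the assumed failure of energy-minimality of the equator map being the source of the required instability.

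\emph{Step 1: self-similar reduction.} Inserting $u(x,t)=R_\chi\psi(|x|/\sqrt t)$ into \eqref{equiv}, one finds that the profile $\psi=\psi(y)$, $y=|x|/\sqrt t$, must solve
\[
\psi''+\Big(\frac{d-1}{y}+\frac y2\Big)\psi'=\frac{k}{y^2}G(\psi),\qquad y\in(0,\infty);
\]
the homogeneous data $u(\cdot,0)=u^\star$ forces $\psi(y)\to s^\star$ as $y\to\infty$, and $u\in H^1_{loc}(\R^d\times[0,\infty))$ amounts to local finiteness near $y=0$ of the weighted energy
\[
\mathcal E(\psi)=\int_0^\infty\Big(\frac12(\psi')^2+\frac{k}{2y^2}g^2(\psi)\Big)y^{d-1}e^{y^2/4}\,dy.
\]
Since $s^\star$ is a local maximum of $g^2$ we have $G(s^\star)=\frac12(g^2)'(s^\star)=0$, so $\psi\equiv s^\star$ solves the profile equation; we must build a \emph{nonconstant} solution. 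The profile equation is the Euler--Lagrange equation of $\mathcal E$, whose critical points are the equivariant harmonic maps of $\R^d$ for the Gaussian density $e^{|x|^2/4}$; but $\mathcal E(s^\star)=+\infty$ because of the weight, so one cannot directly minimise $\mathcal E$ among profiles with $\psi(\infty)=s^\star$.

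\emph{Step 2: truncated minimisation and the competitor.} For $\rho\ge1$ I would minimise $\mathcal E_\rho(\psi):=\int_0^\rho(\cdots)\,y^{d-1}e^{y^2/4}\,dy$ over $\psi\colon[0,\rho]\to\R$ with $\psi(\rho)=s^\star$. On the bounded interval $[0,\rho]$ the weight is bounded, so the direct method yields a minimiser $\psi_\rho$; it solves the profile equation on $(0,\rho)$, is smooth there and bounded, and $\psi_\rho(0)$ is a critical point of $g^2$ (forced at the regular singular point $y=0$). To see $\psi_\rho\not\equiv s^\star$, invoke the hypothesis: choose $h_0\in\mathcal F_{\chi,s^\star}$ with $E(R_\chi h_0,B_1)<E(u^\star,B_1)$ and test $\mathcal E_\rho$ against the competitor equal to $h_0(\cdot/\eps)$ on $[0,\eps]$ and to $s^\star$ on $[\eps,\rho]$. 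A scaling computation $y=\eps r$ gives
\[
\mathcal E_\rho(\text{competitor})-\mathcal E_\rho(s^\star)=c\,\eps^{d-2}\big(E(R_\chi h_0,B_1)-E(u^\star,B_1)\big)+o(\eps^{d-2}),\qquad c>0,
\]
which is negative for $\eps$ small. Hence $\mathcal E_\rho(\psi_\rho)<\mathcal E_\rho(s^\star)$, and the energy gap can be taken $\ge\delta_0>0$ uniformly in $\rho\ge1$.

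\emph{Step 3: passage to the limit.} Sending $\rho\to\infty$ and extracting a limit $\psi_\infty$ of $\psi_\rho$ locally uniformly on $[0,\infty)$, ODE stability shows $\psi_\infty$ solves the profile equation on $(0,\infty)$; once one checks $\psi_\infty\not\equiv s^\star$ and $\psi_\infty(y)\to s^\star$ as $y\to\infty$, the map $u(x,t):=R_\chi\psi_\infty(|x|/\sqrt t)$ is the asserted nonconstant self-similar weak solution of \eqref{HF}, \eqref{IV} with data $u^\star$.

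\emph{Main obstacle.} The delicate point is precisely this passage to the limit. Because $\mathcal E_\rho(s^\star)\to+\infty$, the bound $\mathcal E_\rho(\psi_\rho)\le\mathcal E_\rho(s^\star)$ is not uniform in $\rho$, so one needs a priori bounds on $\psi_\rho$ — uniform $L^\infty$ and energy bounds on fixed compact subsets of $(0,\infty)$ — and, more delicately, one must rule out loss at infinity, i.e.\ ensure $\psi_\infty$ terminates at $s^\star$ rather than at some other critical point of $g^2$ (e.g.\ a pole); here one exploits the structure of the profile equation near the irregular singular point $y=\infty$, where $(y^{d-1}e^{y^2/4}\psi')'=ky^{d-3}e^{y^2/4}G(\psi)$ forces every bounded solution to converge to a constant, together with the uniform energy gap of Step 2. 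A complementary route avoids the truncation altogether: shoot from $y=0$ at the critical point $a\ne s^\star$ of $g^2$ singled out by the low-energy competitor $h_0$, follow the one-parameter family of profiles leaving $a$, and use an intermediate-value argument in the shooting parameter to land exactly on $s^\star$ as $y\to\infty$ — the failure of energy-minimality being exactly what forces the necessary overshoot.
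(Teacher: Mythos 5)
Your overall strategy is variational, as in the paper, and your competitor construction in Step~2 (rescaling the low-energy competitor provided by the failure of energy-minimality so that the weight $e^{y^2/4}$ becomes negligible on its support) is precisely the scaling device the paper uses. The global structure, however, is genuinely different, and the gap you flag at the end is real and not easily closed in the form you present.

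The paper does not truncate. Instead of minimising $\mathcal E$ itself (infinite at $\psi\equiv s^\star$), it minimises the \emph{renormalised} functional
$$
\overline E(f)=\int_0^\infty\Big[\abs{f'}^2+\frac{k}{r^2}\big(g^2(s^\star+f)-g^2(s^\star)\big)\Big]\,r^{d-1}e^{r^2/4}\,dr,
$$
where $f=\psi-s^\star$, on the closure $\overline{\mathcal F}$ (in the weighted $H^1$ norm) of the compactly supported radial functions. Because $g'(s^\star)=0$, the integrand is quadratic to leading order, $\overline E(0)=0$, and the weighted Hardy inequality
$\int f^2(1+r^2)r^{d-3}e^{r^2/4}\,dr\lesssim\int\abs{f'}^2r^{d-1}e^{r^2/4}\,dr$
gives coercivity and weak lower semicontinuity of $\overline E$ on the whole half-line. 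Your Step~2 already shows, in effect, that $\inf\overline E<0$; the paper then concludes directly that a global minimiser $f\ne0$ exists, with $\psi=s^\star+f$ automatically tending to $s^\star$ at infinity because membership in $\overline{\mathcal F}$ forces $f(r)\to0$. This kills all three of the difficulties you list (uniform local bounds, no loss at infinity, nonconstancy) in one stroke.

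Your truncated-minimisation route, by contrast, leaves the passage $\rho\to\infty$ genuinely open: the uniform bound is on the \emph{difference} $\mathcal E_\rho(\psi_\rho)-\mathcal E_\rho(s^\star)\le-\delta_0$, not on $\psi_\rho$ itself, and turning that into local compactness plus convergence to $s^\star$ at infinity (rather than escape to a pole or a neighbouring critical point) requires exactly the Hardy-based control that the renormalisation packages cleanly. The alternative shooting argument you sketch is closer to the paper's proof of Theorem~\ref{THM1}, which needs the structural hypotheses (C1), (C2) absent from the present theorem; it would not recover Theorem~\ref{THM2} at its stated level of generality (in particular for non-compact targets). In short: right family of ideas, but you are missing the renormalisation $f=\psi-s^\star$ and the weighted Hardy coercivity that make a single global minimisation possible, and your substitute (truncation and limit) has an unbridged gap at infinity.
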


We shall next impose the following restrictions on the function $g$ representing the metric of $N$. 

\begin{cond}[C1]
Let $C_{s^\star}$ be an equator of a compact, rotationally symmetric manifold $N$ and let $s_1<s^\star<s_2$ be the local minima of $g^2$ to the left and to the right of $s^\star$, i.e.~the local minima of $g^2$ such that $g^2\vert_{[s_1,s^\star]}$ is increasing while $g^2\vert_{[s^\star,s_2]}$ is decreasing.

We then demand that
\begin{equation*}
G'(s^\star)=\min_{s\in[s_1,s_2]} G'(s)
\end{equation*} 
(recall $G(s):=g'(s)g(s)$).
\end{cond}

For manifolds that contain a minimal sphere $C_{s_0}$ we furthermore impose 

\begin{cond}[C2]
Let $k$ be any given eigenvalue of $-\Delta_{\mathbb{S}^{d-1}}$. We say that a rotationally symmetric manifold $N$ fulfils condition (C2) (for $k$) if for each minimal sphere $C_{s_0}$ of $N$ 
$$G'(s_0)\geq \frac{d-1}{k}.$$ 
\end{cond}

Conditions (C1) and (C2) are fulfilled for a wide variety of rotationally symmetric manifolds, in particular for round spheres and for rotationally symmetric ellipsoids. 
\\

\begin{thm}
\label{THM1}
Let $d\geq 3$, let $N^n$ be a compact, rotationally symmetric manifold and let $\chi:\mathbb{S}^{d-1}\to \mathbb{S}^{n-1}$ be an eigenmap.
\begin{enumerate}
\item[(i)] There exists a selfsimilar and equivariant weak solution of \eqref{HF} for any admissible initial data, i.e. for every map $u_0(x)=(s,\chi(\frac{x}{\abs{x}}))$, $s\in \R$. 
\item[(ii)] Assume that all equator maps of the manifold $M$ are $\chi$-energy minimising and that conditions (C1) and (C2) are satisfied. Then the solution of (i) is unique in the class of all equivariant and selfsimilar weak solutions.
\item[(iii)] Assume that the manifold $N$ has an equator $C_{s^\star}$ such that $$-4kG'(s^\star)>(d-2)^2,$$ i.e.~such that the corresponding equator map is not even locally energy minimising. Then given any number $K\in\N$, there exists a neighbourhood $U_K$ of $s^\star$ such that the initial value problem \eqref{HF}, \eqref{IV} has at least $K$ different weak solutions which are $\chi$-equivariant and selfsimilar for each initial data $u_0(x)=(s,\frac{x}{\abs{x}})$ with $s\in U_K$.
\end{enumerate}
\end{thm}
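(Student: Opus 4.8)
The plan is to pass from the PDE to an ODE boundary value problem for the selfsimilar profile and then analyse it by phase-plane and comparison arguments. Writing a $\chi$-equivariant selfsimilar solution as $u(x,t)=R_\chi h$, $h(t,r)=\psi(r/\sqrt t)$, equation~\eqref{equiv} is equivalent to
\[
\psi''+\Big(\frac{d-1}{\rho}+\frac{\rho}{2}\Big)\psi'=\frac{k}{\rho^2}G(\psi),\qquad\rho>0,
\]
with the homogeneous data $u_0=(s,\chi(x/\abs{x}))$ corresponding to $\psi(\rho)\to s$ as $\rho\to\infty$; an analysis of the regular singular point $\rho=0$ shows that for $R_\chi\psi$ to lie in the energy class~\eqref{space} near the origin, $\psi$ must extend smoothly to $0$ with $\psi(0)$ a critical point of $g^2$ (a pole, an equator, or a minimal sphere), since otherwise the source term produces a logarithmic singularity. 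Thus (i)--(iii) become existence, uniqueness, and multiplicity statements for this two-point problem. For (i) I would shoot from infinity: the solutions asymptotic to a prescribed $s$ at $\rho=\infty$ form a one-parameter family (the parameter being the amplitude of an exponentially decaying subdominant mode); integrating these inward --- using that $N$ is compact, so that $G=gg'$ is smooth, bounded and periodic and the trajectories persist for all $\rho>0$ while staying controlled --- a continuity/intermediate-value argument on the shooting parameter yields one trajectory meeting the admissibility condition at $\rho=0$, hence the required solution $u(x,t)=R_\chi\psi(\abs{x}/\sqrt t)$ for every $s$.

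For the uniqueness statement (ii) I would pass to the logarithmic variable $\tau=\log\rho$, in which $\Psi(\tau):=\psi(e^\tau)$ solves $\Psi_{\tau\tau}+(d-2)\Psi_\tau+\halb e^{2\tau}\Psi_\tau=kG(\Psi)$. As $\tau\to-\infty$ this is the autonomous damped pendulum with potential $-\tfrac k2 g^2$, whose wells are the equators and whose hilltops are the poles and minimal spheres, while for $\tau\gtrsim 0$ the term $\halb e^{2\tau}\Psi_\tau$ is an overwhelming damping that freezes $\Psi$ at the asymptotic value $s$. The characteristic exponents of the linearisation of the autonomous system at an equator $s^\star$ are $\tfrac12\big({-(d-2)}\pm\sqrt{(d-2)^2+4kG'(s^\star)}\big)$, which by Proposition~\ref{Prop3.1} are \emph{real} exactly when the equator map is $\chi$-energy-minimising; since poles and minimal spheres are always hyperbolic saddles, the hypothesis that all equators are $\chi$-energy-minimising makes every well overdamped, so every admissible trajectory is monotone, leaving the unstable manifold of an adjacent hilltop and descending into the neighbouring well without overshoot. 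Condition~(C2) forces the indicial exponent at a minimal sphere to be $\ge 1$ and lets one exclude minimal spheres as origin values of genuine profiles, while condition~(C1), $G'(s^\star)=\min_{[s_1,s_2]}G'$, ensures that the linearisation at the equator governs the comparison over the whole fundamental domain $[s_1,s_2]$. Granting this, for each admissible $s$ the shooting map $c\mapsto\psi_c(\infty)$ along the one-dimensional unstable manifold of the adjacent pole is a strictly monotone bijection onto the interval between that pole and $s^\star$, so there is exactly one profile (the appropriate constant when $s$ is itself critical), which is (ii).

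For the multiplicity statement (iii), the hypothesis $-4kG'(s^\star)>(d-2)^2$ is, by Proposition~\ref{Prop3.1}(ii), precisely the statement that those exponents are \emph{complex}, so $s^\star$ is a stable spiral of the autonomous system; equivalently, after the standard change of dependent variable removing the first-order term, the linearisation of the profile ODE at $\psi\equiv s^\star$ becomes $\eta''+Q(\rho)\eta=0$ with $\rho^2 Q(\rho)\to\tfrac14+\beta^2$ as $\rho\to0$, where $\beta^2:=\tfrac14\big({-4kG'(s^\star)}-(d-2)^2\big)>0$, so by the classical oscillation criterion $\eta$ has infinitely many zeros accumulating at $0$. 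Since the equator map is then not $\chi$-energy-minimising, Theorem~\ref{THM2} provides a non-constant profile with $\psi^\star(\infty)=s^\star$ that emanates from a pole and spirals into $s^\star$ in the $(\Psi,\Psi_\tau)$ plane. Following the one-parameter shooting family $\psi_c$ issuing from that pole (so that $\psi_c(\rho)\sim c\,\rho^{\alpha_+}$ near $0$, with $\alpha_+>0$ the positive indicial exponent there), for large $c$ the trajectory is carried close to $s^\star$ before the freezing regime $\tau\sim 0$; consequently $\psi_c(\infty)$ tracks successive points of the spiral and oscillates about $s^\star$ with amplitude tending to $0$ as $c\to\infty$, the profiles $\psi_c$ being pairwise distinct since they wind a different number of times about the focus. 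Hence, given $K\in\N$, once $\abs{s-s^\star}$ is smaller than the amplitude of the $K$-th excursion of $c\mapsto\psi_c(\infty)$, the equation $\psi_c(\infty)=s$ has at least $K$ solutions in $c$; taking $U_K$ to be the corresponding neighbourhood of $s^\star$ produces the asserted $K$ distinct $\chi$-equivariant selfsimilar solutions.

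The step I expect to be the main obstacle is the uniqueness part (ii): turning ``all wells overdamped $\Rightarrow$ all profiles monotone $\Rightarrow$ exactly one profile per datum'' into rigorous comparison and maximum-principle arguments that hold globally on the compact target --- which is exactly where conditions (C1) and (C2) are needed --- together with a careful exclusion of weak solutions that fail to be regular at the origin. A secondary difficulty, common to (i) and (iii), is the quantitative matching between the autonomous regime near $\tau=-\infty$ and the freezing regime $\tau\gtrsim0$: the shooting argument in (i) and the oscillation count in (iii) both rely on controlling a trajectory uniformly across this transition.
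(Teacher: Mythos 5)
Your proposal reproduces the paper's overall strategy -- reduce to the profile ODE \eqref{EX}, analyse its regular singular point at $\rho=0$, and combine shooting with comparison and oscillation arguments -- but there are several places where what you treat as a routine preliminary is in fact a substantive step, and one place where you have misread a hypothesis.

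First, your opening claim that "for $R_\chi\psi$ to lie in the energy class near the origin, $\psi$ must extend smoothly to $0$ with $\psi(0)$ a critical point of $g^2$ (a pole, an equator, or a minimal sphere)" is both incomplete and too generous. The condition $\psi\in H^1_{rad}$ does \emph{not} a priori force boundedness near $0$ (it permits $\psi\sim r^{-\alpha}$ for $\alpha<\tfrac{d-2}{2}$), so a Frobenius analysis at a \emph{presumed} finite value $\psi(0)$ does not by itself rule out singular admissible profiles. The paper devotes Lemma~\ref{lemma5.2} (unbounded solutions have $|h'|\gtrsim r^{1-d}$ near $0$ via the monotone quantity $\widetilde V$), Lemma~\ref{lemma5.3} (eventual monotonicity near $0$), and the case analysis of Proposition~\ref{total} to make this classification rigorous, and the conclusion there is strictly stronger than yours: a \emph{non-constant} admissible profile must start at a pole or minimal sphere -- an equator is excluded as an origin value (Case 2 of Proposition~\ref{total}), which your list overlooks. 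Second, your reading of (C2) is off: the paper uses $G'(s_0)\ge\frac{d-1}{k}$ at minimal spheres to guarantee $\gamma\ge 1$ so that the boundary point lemma gives \emph{uniqueness of $h_a$ for given Frobenius data $a$} (Proposition~\ref{Prop6}); it is not there to "exclude minimal spheres as origin values" -- those are precisely the allowed origin values.

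Third, for (i) you shoot from $\rho=\infty$, parametrising by the subdominant exponentially decaying mode, and invoke an unspecified intermediate-value argument to produce a trajectory that is admissible at $\rho=0$. This is considerably more delicate than the paper's direction: shooting from the origin via the family $(h_a)_{a\ge 0}$ of Proposition~\ref{Prop6}, admissibility at $\rho=0$ holds by construction, and the existence statement reduces to showing that the continuous map $L(a)=\lim_{r\to\infty}h_a(r)$ has image containing $[s_0,s^\star)$, which follows from comparison with the harmonic-map profile $\bar h$ (Lemma~\ref{lemma7}, Proposition~\ref{PropKaul}) and the maximum principle Lemma~\ref{lemma4}. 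In your direction it is unclear what scalar function of the shooting parameter changes sign, since "being bounded and regular at $\rho=0$" is a codimension-one but topologically unstructured condition. For (ii), you correctly identify the crux and defer it; the paper's resolution is the weighted comparison principle Proposition~\ref{CompPrinc}, whose hypotheses are precisely $4k\theta\le(d-2)^2$ on the fundamental interval (this is where (C1) enters), together with the classification Proposition~\ref{total} to exclude stray solutions. For (iii), your phase-plane spiral picture is essentially the paper's argument in disguise -- the winding number about the focus corresponds to the intersection count $I(a)=\#\{r:h_a(r)=s^\star\}$ -- but the paper also needs the quantitative freezing bound of Lemma~\ref{lemmaR} (at most one crossing of $s^\star$ on $[R,\infty)$) to control how the winding number changes and to establish that the sets $S_{2k}\cup S_{2k+1}$ are genuine neighbourhoods of $s^\star$; your heuristic "the freezing regime $\tau\gtrsim 0$" gestures at this but does not supply it.
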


\begin{rem}\label{rem:mon}
All solutions obtained in theorems~\ref{THM2} and~\ref{THM1} satisfy the monotonicity formula of Struwe \cite{Struwehighdim}. For (constant in time) equivariant harmonic maps this follows since the maps are stationary harmonic (see Lemma 7.4.1 in Lin and Wang~\cite{LW}). For more general selfsimilar solutions the monotonicity formula can be shown using the asymptotics of solutions of \eqref{EX}.
\end{rem}

\subsection{Stability at time $t=1$}

In the previous section, we characterised precisely the existence and uniqueness properties of self-similar solutions to the harmonic map heat flow. Our aim now is to study their stability: we focus in this section on the effect of a perturbation at time $t=1$, and in the next on a perturbation occurring at time $t=0$.

Let $\psi$ be one of the self-similar profiles whose existence has been established, and consider a perturbation $u=R_\chi \left[\psi\left(\frac{\cdot}{\sqrt{t}}\right) + f\right]$. For data given at $t=1$, the Cauchy problem becomes
\begin{equation}
\label{ecureuil}
\left\{
\begin{array}{l}
f_t-f_{rr}-\frac{d-1}{r}f_r+\frac{k}{r^2}\left[ G\left( f+ \psi\left(\frac{\cdot}{\sqrt{t}}\right)\right) - G\left( f+ \psi\left(\frac{\cdot}{\sqrt{t}}\right)\right) \right]=0 \\ f(t=0)=f_0 
\end{array} 
\right. .
\end{equation}
By scaling invariance, it is of course equivalent to study the problem from $t=1$ or any other positive time.

Suppose first that $\psi$ is provided by Theorem~\ref{THM2}. The proof given in section~\ref{section:Non-unique} will give that $\psi$ minimises the functional 
$$
\overline{E}(f):=\int_0^\infty \big[\abs{f'}^2+\frac{k}{r^2}\big(g^2(s^\star+f)-g^2(s^\star)\big)\big]\,r^{d-1}e^{r^2/4}\,dr,
$$
which is very reminiscent of the well-known monotonicity formula for the harmonic map heat flow~\cite{Struwehighdim}. Thus our first stability result essentially corresponds to a forward time version of the monotonicity formula.

\begin{thm}
\label{chien}
Let $\psi$ be given by Theorem~\ref{THM2}, and let $f$ solve~(\ref{ecureuil}). Then $\overline{E}\left( f(\sqrt{t} \cdot) + \psi \right)$ is a decreasing function of time.
\end{thm}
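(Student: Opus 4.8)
\medskip
\noindent\textbf{Proof strategy for Theorem~\ref{chien}.}
The plan is to pass to parabolic self-similar variables, in which $\overline E$ becomes (up to a positive factor) the Lyapunov functional of a weighted gradient flow, and then to read off the monotonicity from the associated energy identity; the genuine work lies in justifying this formal computation at spatial infinity, where the Gaussian weight grows faster than any exponential.

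First I would absorb $\psi$ into the unknown. Writing $h:=\psi(\cdot/\sqrt t)+f$ and using that $\psi(\cdot/\sqrt t)$ is itself a selfsimilar solution of \eqref{equiv}, the perturbation equation \eqref{ecureuil} is equivalent to $h$ being a solution of \eqref{equiv}. Introducing $\rho:=r/\sqrt t$, $\tau:=\log t$ and $v(\rho,\tau):=h(\sqrt t\,\rho,t)=\psi(\rho)+f(\sqrt t\,\rho,t)$, one has $\overline E\big(v(\cdot,\tau)\big)=\int_0^\infty\big[|v_\rho|^2+\tfrac{k}{\rho^2}(g^2(v)-g^2(s^\star))\big]\,\rho^{d-1}e^{\rho^2/4}\,d\rho$, which is exactly the quantity in the theorem, while a routine chain rule turns \eqref{equiv} into
\[
v_\tau=v_{\rho\rho}+\Big(\frac{d-1}{\rho}+\frac{\rho}{2}\Big)v_\rho-\frac{k}{\rho^2}G(v).
\]
Since $\frac{d}{dt}=\frac1t\frac{d}{d\tau}$ and $t>0$, it suffices to prove that $\tau\mapsto\overline E\big(v(\cdot,\tau)\big)$ is non-increasing.

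Next I would exploit the variational structure. Setting $w(\rho):=\rho^{d-1}e^{\rho^2/4}$ one has $w'/w=\frac{d-1}{\rho}+\frac{\rho}{2}$, so the right-hand side above equals $w^{-1}(w\,v_\rho)_\rho-\frac{k}{\rho^2}G(v)$; since $G=\halb(g^2)'$ and the additive constant $g^2(s^\star)$ plays no role in variations, this is $-\halb$ times the gradient of $\overline E$ for the inner product $\langle a,b\rangle_w:=\int_0^\infty ab\,w\,d\rho$. Concretely, differentiating $\overline E(v(\cdot,\tau))$ in $\tau$, integrating the diffusion term by parts, and inserting the equation yields
\[
\frac{d}{d\tau}\,\overline E\big(v(\cdot,\tau)\big)=-2\int_0^\infty |v_\tau|^2\,w\,d\rho+\Big[\,2\,w\,v_\rho\,v_\tau\,\Big]_{\rho=0}^{\rho=\infty}.
\]

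It then remains to kill the boundary contributions and to legitimise differentiating under the integral sign. At $\rho=0$ the term vanishes since $w(0)=0$ while $v$, being the radial profile of a map smooth across the origin, has bounded derivatives there. The boundary term at $\rho=\infty$ is the genuine obstacle: the weight $\rho^{d-1}e^{\rho^2/4}$ grows super-exponentially, so one needs very strong decay of $v-s^\star$ and of $v_\rho,\,v_\tau$ as $\rho\to\infty$ — strong enough not merely to annihilate the boundary term but also to ensure $\overline E(v(\cdot,\tau))<\infty$ and to allow the $\tau$-differentiation under the integral. Here I would invoke the sharp asymptotics of the selfsimilar profile $\psi$ established in the proof of Theorem~\ref{THM2} (the $\rho\to\infty$ behaviour of solutions of \eqref{EX}; cf.\ Remark~\ref{rem:mon}), the fact that $g^2(v)-g^2(s^\star)=O\big((v-s^\star)^2\big)$ near $s^\star$ (because $G(s^\star)=0$ at an equator), parabolic regularity for $f$, and a weighted-energy (or comparison) argument showing that the perturbation $f$ inherits Gaussian-type decay from its data. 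Once one has, for every $N$, bounds of the form $|v-s^\star|+|v_\rho|+|v_\tau|=O\big(\langle\rho\rangle^{-N}e^{-\rho^2/4}\big)$ as $\rho\to\infty$, all integrability and boundary issues disappear, the displayed identity gives $\tfrac{d}{d\tau}\overline E\le 0$, and hence $\overline E\big(f(\sqrt t\,\cdot)+\psi\big)$ is decreasing in $t$ (strictly, unless $f\equiv 0$). I expect this decay-and-regularity bookkeeping near $\rho=\infty$ to be the main difficulty; the gradient-flow identity itself is routine.
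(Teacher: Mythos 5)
Your proof follows the same route as the paper: passing to self-similar variables $\rho=r/\sqrt t$, $\sigma=\log t$, recognising that the transformed equation is the $L^2(d\mu)$-gradient flow of $\overline E$ (with $d\mu=\rho^{d-1}e^{\rho^2/4}\,d\rho$), and reading off monotonicity from the energy identity $\partial_\sigma\overline E(w)=-c\,\|\partial_\sigma w\|_{L^2(d\mu)}^2$ after an integration by parts. The paper's proof stops at exactly that formal computation; you devote the bulk of your argument to the analytic bookkeeping (vanishing of the boundary term at $\rho=\infty$, differentiation under the integral sign, decay of $v-s^\star$, $v_\rho$, $v_\tau$ against the super-exponential weight). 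That extra care is not wasted: the crude bound $|h'(r)|\lesssim r^{-3}$ of Lemma~\ref{lemma2} by itself does not make $\overline E$ finite against the weight $\rho^{d-1}e^{\rho^2/4}$, and the theorem is only meaningful under an implicit hypothesis that $f(\sqrt t\,\cdot)$ lies in (or stays in) the weighted space $\overline{\mathcal F}$ where $\overline E$ is defined; your proposal makes that dependence explicit. The one small discrepancy is a harmless constant: the correctly-normalised identity carries a factor $2$, i.e.\ $\partial_\sigma\overline E=-2\|\partial_\sigma w\|_{L^2(d\mu)}^2$, as you have it, rather than the factor-free form displayed in the paper; this does not affect the conclusion.
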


Even though $\overline{E}$ is only minimised at $\psi$, it is not clear to us to what extent $\overline{E}\left( f(\sqrt{t} \cdot) + \psi \right)$ controls $f$.

\bigskip

Consider now a general profile $\psi$, given by Theorem~\ref{THM1}; we want to investigate linear stability. The linearised version of~(\ref{ecureuil}) reads
\begin{equation}
\label{LIN}
\left\{
\begin{array}{l}
f_t-f_{rr}-\frac{d-1}{r}f_r+\frac{k}{r^2} G'\left( \psi\left(\frac{\cdot}{\sqrt{t}}\right)\right) f=0 \\ f(t=1)=f_0 
\end{array} 
\right. .
\end{equation}

A spectral analysis of the above problem in self-similar variables will lead to the following theorem.
\begin{thm}
\label{thm:lin-stab}
Let $\psi$ be given by Theorem~\ref{THM1}, and $v$ solve~(\ref{LIN}).
\begin{itemize}
\item[(i)] If $\psi$ is monotone,
$$
\|v(t)\|_{L^2(e^{\frac{r^2}{4t}}r^{d-1}dr)} \leq t^{\frac{d}{2}-2} \|v_0\|_{L^2(e^{\frac{r^2}{4}}r^{d-1}dr)}.
$$
In particular, if $d\leq 4$, $\displaystyle \|v\|_{L^2(e^{\frac{r^2}{4t}}r^{d-1}dr)}$ is decreasing.

\item[(ii)] If $\psi$ has $K$ local extrema, there exists $\gamma>0$ and $K$ linearly independent data $v_0$ such that
$$
\|v\|_{L^2(e^{\frac{r^2}{4t}}r^{d-1}dr)} \gtrsim t^{\gamma + \frac{d}{2}-2}.
$$
In particular, if $d \geq 4$, this corresponds to a growing norm.
\end{itemize}
\end{thm}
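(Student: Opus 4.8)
The natural framework is to pass to self-similar variables: set $\tau = \log t$ and $y = r/\sqrt{t}$, and write the solution as $v(r,t) = t^{a}\, w(y,\tau)$ for a suitable exponent $a$ to be chosen. In these coordinates the linear equation~(\ref{LIN}) becomes autonomous in $\tau$, of the form $w_\tau = -\mathcal{L} w$ where $\mathcal{L}$ is a Schrödinger-type (Sturm–Liouville) operator on $[0,\infty)$,
$$
\mathcal{L} w = -w_{yy} - \frac{d-1}{y}w_y - \frac{y}{2}w_y + \frac{k}{y^2}G'(\psi(y))\, w - c\, w,
$$
self-adjoint with respect to the weight $\rho(y)\,dy := y^{d-1}e^{y^2/4}\,dy$; the constant $c$ absorbs the power $a$ we pulled out, and the term $-\tfrac{y}{2}w_y$ comes from differentiating $w(r/\sqrt t,\log t)$ in $t$. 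The $L^2(e^{r^2/(4t)}r^{d-1}dr)$-norm of $v(t)$ then translates, up to an explicit power of $t$, into the $L^2(\rho\,dy)$-norm of $w(\tau)$, and the estimates claimed in (i) and (ii) become statements about the bottom of the spectrum of $\mathcal{L}$: part (i) is the assertion that $\mathcal{L} \geq 0$ (equivalently the first eigenvalue is nonnegative) when $\psi$ is monotone, while part (ii) is the assertion that $\mathcal{L}$ has at least $K$ negative eigenvalues (counted with the right exponent $\gamma$) when $\psi$ has $K$ local extrema.

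For part (i), the key observation is that the self-similar profile equation is itself $-\mathcal{L}_0 \psi' = 0$-like: differentiating the ODE satisfied by $\psi$ (which governs the self-similar profiles of Theorem~\ref{THM1}) with respect to $y$ shows that $\psi'$ is a (generalized) solution of $\mathcal{L}\psi' = 0$ for the correct choice of the constant $c$, i.e. for the exponent matching $t^{d/2-2}$ in the statement. If $\psi$ is monotone then $\psi'$ has no zeros on $(0,\infty)$, so by the oscillation/Sturm theory for Sturm–Liouville operators, the eigenfunction $\psi'$ corresponds to the ground state and the corresponding eigenvalue is the smallest one; since that eigenvalue is $0$, we get $\mathcal{L}\geq 0$, hence $\|w(\tau)\|_{L^2(\rho)}$ is non-increasing, which gives the stated bound after undoing the substitution. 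One has to be a little careful about the behaviour of $\psi'$ at $y=0$ and $y=\infty$ to know it really is an $L^2(\rho)$ eigenfunction (or at least that it serves as the relevant nodeless solution for Sturm comparison); the decay $e^{-y^2/4}$ built into the weight makes the endpoint at infinity a limit-point case and is favourable, and the boundary condition at $0$ is dictated by the equivariance class.

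For part (ii), when $\psi$ has $K$ local extrema, $\psi'$ has $K$ interior zeros, so by the same Sturm oscillation theory $\psi'$ is (morally) the $(K{+}1)$-st eigenfunction with eigenvalue $0$; therefore $\mathcal{L}$ has exactly $K$ eigenvalues strictly below $0$, say $-\gamma_1 \leq \cdots \leq -\gamma_K < 0$. Taking $\gamma = \gamma_K > 0$ and letting $v_0$ range over the corresponding $K$-dimensional eigenspace (or rather its image under the change of variables) produces data for which $\|w(\tau)\|_{L^2(\rho)}$ grows like $e^{\gamma\tau}$, i.e. $\|v\|_{L^2(e^{r^2/(4t)}r^{d-1}dr)} \gtrsim t^{\gamma + d/2 - 2}$, and these $K$ data are linearly independent because the eigenfunctions are. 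The statements about the sign being a genuine growth when $d\geq 4$ (resp. decay when $d\leq 4$ in part (i)) are then immediate from the sign of the exponent $d/2-2$. The main obstacle I expect is the spectral-theoretic bookkeeping at the two singular endpoints: verifying that $\mathcal{L}$ with the $y^{d-1}e^{y^2/4}$ weight is essentially self-adjoint (or choosing the correct self-adjoint extension) and that the oscillation theorem applies in this singular, weighted setting, so that "the number of zeros of $\psi'$ equals the index of the zero eigenvalue'' is rigorous — in particular ruling out that $\psi'$ fails to be in the form domain, and controlling the contribution of the region near $y=0$ where the potential $\tfrac{k}{y^2}G'(\psi)$ is singular. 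Once that framework is in place, everything else is standard Sturm–Liouville theory plus unwinding the self-similar change of variables.
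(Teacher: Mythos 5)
Your overall framework is the same as the paper's: pass to self-similar variables, recognise the resulting operator as a singular Sturm--Liouville operator in the weighted space $L^2(e^{\rho^2/4}\rho^{d-1}d\rho)$, invoke oscillation theory to count eigenvalues below the threshold, and relate that count to the number of local extrema of $\psi$. The paper carries this out via Proposition~\ref{Prop:EV}, Lemma~\ref{lemma:komp} (discreteness of the spectrum via a compactness argument for the Friedrichs extension), Lemma~\ref{asymptEF} (asymptotics of solutions of the eigenvalue ODE), and Lemma~\ref{lemma:Sturm-Liou} (the Sturm count). Your worries about the endpoints and about whether the would-be zero mode lies in the Hilbert space are well placed: the paper sidesteps the latter by never requiring the zero mode to be in $L^2(\mu)$, using instead the solution $v_{E_0}$ fixed by its boundary behaviour at the origin and counting its zeros.

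There is, however, a genuine error in the way you identify the zero mode. You propose that differentiating the profile ODE
$$\psi''+\Big(\frac{d-1}{\rho}+\frac{\rho}{2}\Big)\psi'-\frac{k}{\rho^2}G(\psi)=0$$
in $\rho$ exhibits $\psi'$ as a solution of the linearised eigenvalue equation for some constant shift $c$. That is not the case: carrying out the differentiation leaves residual terms
$$A\psi' = \tfrac12\psi'-\frac{d-1}{\rho^2}\psi'+\frac{2k}{\rho^3}G(\psi),$$
which is not a constant multiple of $\psi'$. The correct generalised zero mode is $\rho\,\psi'(\rho)$, and it does not come from spatial differentiation at all; it comes from the \emph{time-translation} symmetry of the heat flow: $u(x,t+\eps)=R_\chi\psi\big(\tfrac{x}{\sqrt{t+\eps}}\big)$ is again a solution, and differentiating in $\eps$ at $\eps=0$ produces, in self-similar coordinates, the identity $A\big(\rho\psi'(\rho)\big)=\rho\psi'(\rho)$. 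So the relevant solution of the eigenvalue ODE at the threshold is $v_1=\rho\psi'$, and the threshold eigenvalue is exactly $E=1$ for the unshifted operator (equivalently, the shift you call $a$ must be taken to be $1$, not left unspecified). This is what produces the exponent $t^{\frac d2-2}$ in the statement: the factor $t^{-1}$ from the spectral gap $E\geq1$ combines with the Jacobian factor coming from the change of variables. Since $\rho\psi'$ and $\psi'$ have the same zeros on $(0,\infty)$, your Sturm-count conclusion survives once the zero mode is corrected, but as written the key step of your argument would not go through.
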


\subsection{Stability at time $t=0$}

We focus in this section on the effect of a perturbation at time $t=0$.

We start with the most simple type of self-similar solutions: the maps which are constant in time, mapping $\R^d$ onto an equator or a minimal sphere. If $u = R_\chi (f+s^\star)$, the equation under study is
\begin{equation}\label{perturbed}
\left\{
\begin{array}{l}
f_t-f_{rr}-\frac{d-1}{r}f_r+\frac{k}{r^2}\left[G(f+s^\star)-G(s^\star)\right]]=0 \\ f(t=0)=f_0 
\end{array} 
\right.
\end{equation}

\begin{thm} 
\label{THM3}
Let $d\geq 3$, let $N^n$ be a rotationally symmetric manifold such that \eqref{g} is satisfied and let $\chi:\mathbb{S}^{d-1}\to \mathbb{S}^{n-1}$ be a fixed eigenmap. Suppose $s^\star$ is such that $G(s^\star)=0$; it corresponds to a constant solution of~\eqref{HF} given by $u_{\chi, s^\star}^\star$. Consider the perturbed equation~(\ref{perturbed}).
\begin{itemize}
\item[(i)] If
$$
kG'(s^\star) > -\frac{(d-2)^2}{4},
$$
this equation is linearly stable (in more precise terms: the Cauchy problem associated with the linear part of equation \eqref{perturbed} is globally well-posed in $L^2$, and the $L^2$ norm is decreasing).
\item[(ii)] If
$$
k \displaystyle \inf_{\R} G' > -\frac{(d-2)^2}{4},
$$
there exists a global weak solution $f$ to the above equation, satisfying 
\begin{equation}
\label{tomahawk}
\displaystyle \|f\|_{L^\infty ([0,\infty),L^2(\mathbb{R}^d))}^2 + \|\nabla f\|_{L^2 ([0,\infty), L^2(\mathbb{R}^d))}^2 \lesssim \|f_0\|_2^2.
\end{equation}
\item[(iii)] If
$$G'(s^\star)>0,$$
i.e.~if $s^\star$ is the coordinate of a pole or a minimal sphere, the above equation is globally well-posed in $L^\infty$ for small data
 (in more precise terms: if $f_0$ is small in $L^\infty$, there exists a solution $f$ of \eqref{perturbed} in $L^\infty([0,\infty), L^\infty)$, which is unique in a small enough ball and depends continuously on $f_0$).
\end{itemize}
\end{thm}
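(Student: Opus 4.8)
The three parts have quite different flavors, so I would treat them separately. For part (i), the linearized equation is
$$f_t - f_{rr} - \tfrac{d-1}{r}f_r + \tfrac{k G'(s^\star)}{r^2} f = 0,$$
which is a linear heat-type equation with an inverse-square potential of constant strength $\mu := k G'(s^\star)$. The natural tool is the Hardy inequality on $\R^d$,
$$\int_{\R^d} \frac{|v|^2}{|x|^2}\,dx \le \frac{4}{(d-2)^2}\int_{\R^d}|\nabla v|^2\,dx,$$
so that the quadratic form $\int |\nabla f|^2 + \mu |x|^{-2} f^2$ is nonnegative (in fact coercive) precisely when $\mu > -\tfrac{(d-2)^2}{4}$. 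Multiplying the equation by $f$ (in the equivariant $r^{d-1}\,dr$ measure, which is exactly $L^2(\R^d)$ for equivariant maps) and integrating gives $\tfrac12 \tfrac{d}{dt}\|f\|_2^2 = -\int|\nabla f|^2 - \mu\int |x|^{-2}f^2 \le 0$. Global well-posedness in $L^2$ then follows from the Lax--Milgram/Hille--Yosida machinery for the associated nonnegative self-adjoint operator; I would just cite the standard theory of Schrödinger operators with inverse-square potentials in the subcritical regime. The one point requiring a little care is the behavior at $r=0$: one should check that in the equivariance class no boundary term at the origin is produced in the integration by parts, which follows from $d\ge 3$ and the finite-energy constraint.

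For part (ii), the equation is now nonlinear but the nonlinearity $G(f+s^\star)-G(s^\star)$ has the favorable sign structure coming from $G'\ge \tfrac{-(d-2)^2}{4k}$ everywhere. I would run a Galerkin or time-discretization scheme to produce a weak solution, and the a~priori estimate~\eqref{tomahawk} is again an energy estimate: multiply by $f$, integrate over $\R^d$, and use that
$$\int \frac{k}{r^2}\big[G(f+s^\star)-G(s^\star)\big] f\,dx \ge k \inf_\R G' \int \frac{f^2}{r^2}\,dx \ge -\frac{(d-2)^2}{4}\int\frac{f^2}{r^2}\,dx,$$
since $\big[G(a+s^\star)-G(s^\star)\big]a \ge (\inf G')\,a^2$ by the mean value theorem. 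Combining with Hardy gives $\tfrac12\tfrac{d}{dt}\|f\|_2^2 + c\|\nabla f\|_2^2 \le 0$ for some $c>0$, and integrating in time yields~\eqref{tomahawk}. The main obstacle here is the usual one for global weak solutions of supercritical parabolic systems: passing to the limit in the nonlinear term. Since we are in the equivariant class and the nonlinearity is a function (not a derivative-quadratic term as in the full harmonic map flow), the compactness is much easier — the bound on $\|\nabla f\|_{L^2_tL^2_x}$ plus the equation gives a bound on $f_t$ in a negative Sobolev space, and Aubin--Lions yields strong $L^2_{loc}$ convergence, enough to pass to the limit in $G(f+s^\star)$ by continuity of $G$ and the growth bound~\eqref{g}. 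So I do not expect a serious difficulty, only some bookkeeping.

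Part (iii) is the genuinely different one: here $G'(s^\star)>0$, so the potential is repulsive and we expect honest well-posedness, but the statement is in $L^\infty$ (scaling-critical) for small data, so energy methods are not the right tool. The strategy is a fixed-point argument on the Duhamel formula
$$f(t) = e^{t\mathcal L}f_0 - \int_0^t e^{(t-\tau)\mathcal L}\Big[\tfrac{k}{r^2}\big(G(f+s^\star)-G(s^\star)-G'(s^\star)f\big)\Big]\,d\tau,$$
where $\mathcal L$ now includes the subprincipal term $-\tfrac{k G'(s^\star)}{r^2}$, i.e. $\mathcal L = \Delta - kG'(s^\star)|x|^{-2}$. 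The key analytic input I would need is a pointwise kernel bound (Gaussian-type upper bound) for the semigroup $e^{t\mathcal L}$ of the inverse-square Schrödinger operator with a \emph{positive} coupling — this is classical (Liskevich--Semenov, Milman--Semenov, or the Davies--Simon heat-kernel bounds for Schrödinger operators with inverse-square potentials); from it one reads off that $e^{t\mathcal L}$ is bounded on $L^\infty$ uniformly in $t$, and crucially that $\|e^{(t-\tau)\mathcal L}\,|x|^{-2}v\|_{L^\infty}\lesssim (t-\tau)^{-1}\|v\|_{L^\infty}$ — but that $(t-\tau)^{-1}$ is not integrable, so a naive estimate fails. The resolution is to exploit the repulsive potential more carefully: the heat kernel of $\mathcal L$ with $kG'(s^\star)>0$ decays like $|x|^{-\alpha}$ near the origin for a suitable $\alpha=\alpha(d,kG'(s^\star))>0$, which produces an extra integrable power of $|x|$ that cancels the $|x|^{-2}$ singularity; more precisely, with a weighted norm $\|f\| = \sup_t \||x|^{\beta} f(t)\|_{L^\infty}$ for a small $\beta$, one gets $\|e^{(t-\tau)\mathcal L}|x|^{-2}v\| \lesssim (t-\tau)^{-1+\delta}\|v\|$ with $\delta>0$, which \emph{is} integrable. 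Since the nonlinearity $G(f+s^\star)-G(s^\star)-G'(s^\star)f = O(f^2)$ is at least quadratic (using smoothness of $g$), the contraction mapping argument in a small ball closes. I expect this part to be the main obstacle of the whole theorem: getting the right weighted $L^\infty$ space and the matching smoothing estimate for the inverse-square semigroup is the technical heart, whereas parts (i) and (ii) are essentially applications of the Hardy inequality combined with standard parabolic existence theory. Uniqueness in the small ball and continuous dependence on $f_0$ then follow from the standard Lipschitz estimates in the same norm.
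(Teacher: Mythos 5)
Parts (i) and (ii) match the paper's route: Hardy's inequality makes the quadratic form coercive, yielding the $L^2$-decay for (i), and the same estimate applied to the nonlinear equation (via Taylor and $k\inf G'>-(d-2)^2/4$) gives the a priori bound \eqref{tomahawk}; an approximation scheme plus Aubin--Lions compactness passes to the limit. The paper's specific approximation is to cut off the singular potential with a factor $\chi(r/\epsilon)$, where you propose Galerkin; both are standard and the rest is identical.

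For part (iii) your route is genuinely different from the paper's, and considerably heavier. You want sharp heat-kernel asymptotics of $e^{-tH_c}$, $H_c=-\Delta+c\,r^{-2}$ with $c=kG'(s^\star)>0$, and a weighted $L^\infty$ space to convert the borderline $(t-\tau)^{-1}$ into something integrable. The paper sidesteps all of this with a maximum-principle argument: write the equation as $f_t+H_cf=F$ with $F=\frac{f^2}{r^2}J(f)$, $J$ smooth and bounded, and observe the elementary identity $H_c\big(\tfrac1c\big)=\tfrac1{r^2}$. Hence $\tfrac{1}{c}\|r^2F\|_{L^\infty_{t,x}}$ is a stationary supersolution, and since the heat kernel of $H_c$ is positive and dominated by that of $e^{t\Delta}$, the maximum principle gives $\|f\|_{L^\infty_{t,x}}\lesssim\|f_0\|_{L^\infty}+\|r^2F\|_{L^\infty_{t,x}}$. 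This is exactly the estimate needed to run the contraction on a small $L^\infty$ ball, since the nonlinearity contributes $\|r^2\cdot\tfrac{f^2}{r^2}J(f)\|_{L^\infty}\lesssim\|f\|_{L^\infty}^2$. In other words, the paper exploits $c>0$ through the supersolution $1/c$ rather than through kernel decay near the origin.

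A further caution about your plan: the weighted norm $\sup_t\||x|^\beta f(t)\|_{L^\infty}$ with $\beta>0$ does not control $\|f\|_{L^\infty}$ near $x=0$ (the weight vanishes there), so as stated the fixed point would not land in the space claimed by the theorem; you would need a two-sided weight or an additional $L^\infty$ component, which in turn requires matching kernel bounds. The paper's comparison-principle argument avoids this entirely and is both more elementary and tighter to the actual claim.
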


\begin{rem} \begin{enumerate} \item The weak solutions in (ii) do not share - even locally - the same functional setup as the Struwe solutions: whereas the former give $f \in L^\infty_t L^2$, the latter would roughly correspond to $f \in L^\infty_t \dot{H}^1$.

\item Notice that the spaces (for the data as well as the solution) for which well-posedness is proved in (ii) and (iii) are at the same scaling as the equation: their norms are invariant by the scaling which leaves the equation invariant. We do not claim any optimality for these spaces: there should exist larger spaces in which the equation is well-posed.

\item The above theorem gives sufficient conditions on $G'(s^\star)$ for various kinds of stability results to hold true. We ask in Subsection~\ref{instab} whether they are also necessary. The answer is shown to be yes for (i) and (ii).
\end{enumerate}
\end{rem}
For non-constant selfsimilar solutions we obtain the following stability result
\begin{thm}
\label{THM4}
In the setting which was just described, consider the perturbed equation for a self-similar profile: If $u(t) =R_\chi \left[\psi\left(\frac{\cdot}{\sqrt{t}}\right) + f\right]$ solves \eqref{HF} then $f$ solves
$$
\left\{
\begin{array}{l}
f_t-f_{rr}-\frac{d-1}{r}f_r+\frac{k}{r^2}\left[ G\left( f+ \psi\left(\frac{\cdot}{\sqrt{t}}\right)\right) - G\left( \psi\left(\frac{\cdot}{\sqrt{t}}\right)\right) \right]=0 \\ f(t=0)=f_0 
\end{array} 
\right. .
$$
Suppose that $\psi$ is such that $G'(\psi(r)) > 0$ for all $r>0$. Then the above equation is globally well-posed in $L^\infty$, namely, for $f_0$ small enough in $L^\infty$, there exists a solution $u$ in $L^\infty_t L^\infty$, which is unique in a small enough ball, and depends continuously on the data.
\end{thm}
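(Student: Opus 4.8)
The plan is to set up a fixed-point argument in $L^\infty_t L^\infty_x$ at the scaling of the equation, exploiting the sign condition $G'(\psi(r))>0$ to obtain a favorable (dissipative) linear part. First I would rewrite the equation for $f$ in self-similar variables $y = x/\sqrt{t}$, $\tau = \log t$, so that the profile $\psi$ becomes stationary; the unknown becomes $w(y,\tau) = f(\sqrt{t}\,y, t)$ and the equation takes the autonomous form $w_\tau = \mathcal{L}w + \mathcal{N}(w)$, where $\mathcal{L}w = w_{rr} + \frac{d-1}{r}w_r + \frac{r}{2}w_r - \frac{k}{r^2}G'(\psi)\,w$ is the linearized operator in self-similar coordinates and $\mathcal{N}(w) = -\frac{k}{r^2}\big[G(\psi+w) - G(\psi) - G'(\psi)w\big]$ collects the quadratic-and-higher terms. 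Since $\psi$ is a self-similar profile whose existence was established in Theorem~\ref{THM1}, it is smooth on $(0,\infty)$ with controlled behavior at $0$ and $\infty$ (in particular $G'(\psi)$ is bounded and, near $r=0$, $G(\psi(r))/r^2$ is regular because $\psi(0)$ is a pole value where $G$ vanishes to first order), so these terms are well-defined.

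The key step is to establish a good pointwise estimate for the linear semigroup $e^{\tau \mathcal{L}}$ on $L^\infty$. Here the hypothesis $G'(\psi(r)) > 0$ for all $r>0$ is decisive: the zeroth-order potential $-\frac{k}{r^2}G'(\psi) \le 0$ acts as a damping term, so by a maximum-principle (or Feynman--Kac / comparison) argument the solution operator of $w_\tau = \mathcal{L}w$ is a contraction on $L^\infty$, i.e.\ $\|e^{\tau\mathcal{L}}w_0\|_\infty \le \|w_0\|_\infty$, and more precisely one expects decay governed by the spectral gap of $\mathcal{L}$ coming from the positive potential plus the drift. One has to be slightly careful near $r=0$: the potential $-k G'(\psi(0))/r^2$ is singular, but since $G'(\psi(0)) > 0$ the singularity has the favorable sign (it is a repulsive/confining inverse-square potential in the Hardy sense), which only helps the contraction property. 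I would phrase this as: the heat kernel $p_\tau(r,\rho)$ of $\mathcal{L}$ satisfies $\int_0^\infty p_\tau(r,\rho)\,\rho^{d-1}\,d\rho \le 1$ uniformly, which is exactly what a Duhamel/contraction scheme needs.

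With the linear estimate in hand, the nonlinear step is routine: write the Duhamel formula $w(\tau) = e^{\tau\mathcal{L}}w_0 + \int_0^\tau e^{(\tau-\sigma)\mathcal{L}}\mathcal{N}(w(\sigma))\,d\sigma$, and use that $\mathcal{N}$ is at least quadratic, so on a small ball $\|w\|_{L^\infty_\tau L^\infty_r} \le \delta$ one has $\|\mathcal{N}(w)\|_\infty \lesssim \|w\|_\infty^2$ and, crucially, $\|\mathcal{N}(w_1) - \mathcal{N}(w_2)\|_\infty \lesssim (\|w_1\|_\infty + \|w_2\|_\infty)\|w_1 - w_2\|_\infty$; the factor $\frac{1}{r^2}$ in $\mathcal{N}$ is compensated by the vanishing of $G(\psi+w)-G(\psi)-G'(\psi)w$ together with the regularity of $\psi$ at the origin (this is precisely where one re-uses the argument already employed for Theorem~\ref{THM3}(iii), the constant-profile case). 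A standard contraction-mapping argument on the ball of radius $C\|w_0\|_\infty$ in $L^\infty_\tau L^\infty_r$ then yields a unique solution for $\|w_0\|_\infty$ small, together with Lipschitz (hence continuous) dependence on the data. Undoing the change of variables $\tau = \log t$ gives a solution $f \in L^\infty_t L^\infty_x$ on $[0,\infty)$ (note $\tau \to -\infty$ corresponds to $t \to 0$, so the data at $t=0$ sits at $\tau = -\infty$; one should formulate the iteration on $\tau \in (-\infty, \infty)$ or equivalently directly in the $(x,t)$ variables using the inhomogeneous kernel), establishing the claim.

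The main obstacle I anticipate is the uniform control of the linear propagator near $r = 0$ and near $r = \infty$ simultaneously: the inverse-square potential at the origin and the Gaussian-type drift weight at infinity must both be handled without losing the contraction constant $1$ on $L^\infty$. I expect this to be dispatched by a careful maximum-principle argument comparing with an explicit supersolution, or equivalently by invoking the corresponding estimate for the model constant-profile operator (Theorem~\ref{THM3}(iii)) and treating $G'(\psi(r)) - G'(\psi(0))$ as a bounded, decaying perturbation of the potential; the sign condition guarantees no spectral instability is introduced along the way.
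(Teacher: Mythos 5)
Your overall plan---reduce to a Picard/Duhamel iteration in $L^\infty_tL^\infty_x$ and re-use the mechanism of Theorem~\ref{THM3}(iii)---is indeed what the paper has in mind (it simply declares that the same argument applies and skips the proof). However, the proposal has one genuine gap and one unnecessary detour.

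\emph{The gap.} The linear estimate you isolate, namely that $e^{\tau\mathcal{L}}$ is an $L^\infty$-contraction (equivalently $\int p_\tau(r,\rho)\rho^{d-1}d\rho\le 1$), is \emph{not} sufficient to close the Duhamel iteration. The point is that for $w\in L^\infty$ the nonlinearity
$\mathcal{N}(w)=-\frac{k}{\rho^2}\bigl[G(\psi+w)-G(\psi)-G'(\psi)w\bigr]$
satisfies $|\mathcal{N}(w)|\lesssim \|w\|_\infty^2/\rho^2$ and is \emph{unbounded} near $\rho=0$: the quadratic vanishing in $w$ and the regularity of $\psi$ at the origin do not cancel the factor $\rho^{-2}$, since $w$ carries no decay at $0$. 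Thus $\|e^{(\tau-\sigma)\mathcal{L}}\mathcal{N}(w(\sigma))\|_\infty\le\|\mathcal{N}(w(\sigma))\|_\infty$ gives nothing. What is actually needed is the sharper inhomogeneous estimate from Lemma~\ref{estim}: if $\rho^2F\in L^\infty$, then the solution of $\partial_\tau f-\mathcal{L}f=F$, $f(0)=0$, obeys $\|f\|_\infty\lesssim\|\rho^2F\|_\infty$. This is precisely where the hypothesis $\inf_{r>0}G'(\psi(r))>0$ is used: with $c_0:=k\inf G'\circ\psi>0$, the constant $\frac{1}{c_0}\|\rho^2F\|_\infty$ is a supersolution because
$\bigl(-\Delta+\tfrac{k}{\rho^2}G'(\psi)\bigr)\bigl(\tfrac{1}{c_0}\bigr)=\tfrac{k G'(\psi)}{c_0\rho^2}\ge \tfrac{1}{\rho^2}$,
and the maximum principle gives the bound. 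This is the direct analogue of $H_c(1/c)=1/r^2$ in the constant-profile case, and it---not the $L^\infty$-contractivity of the propagator---is ``exactly what the Duhamel scheme needs.''

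\emph{The detour.} The change to self-similar variables $(\rho,\tau)=(r/\sqrt t,\log t)$ is unnecessary and introduces the issue you yourself flag, namely that the initial data sits at $\tau=-\infty$. Working in $(r,t)$ directly, as the paper does for Theorem~\ref{THM3}(iii), the potential $V(r,t)=\frac{k}{r^2}G'\bigl(\psi(r/\sqrt t)\bigr)$ is time-dependent, but the constant supersolution and maximum-principle argument above apply verbatim since $V(r,t)\ge c_0/r^2$ uniformly in $t$; the resulting Lemma~\ref{estim}-type bound then feeds into the same contraction-mapping argument with no change of variables required. If you repair the linear step as indicated and drop (or carefully justify the $\tau\to-\infty$ limit in) the self-similar reformulation, the argument becomes complete.
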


This theorem follows by the same arguments as Theorem~\ref{THM3}, (iii), thus we skip its proof.
\\

\textbf{Notation}
The notation $A \lesssim B$ means: there exists a constant $C$ such that $A \leq CB$. \\

\textbf{Acknowledgement} The research of the second author was partially supported by the Swiss National Science Foundation and The Leverhulme Trust.

\section{Preliminaries}

\subsection{Weak solutions of the harmonic map flow in the equivariant setting}
\label{section:equiv}
Let $N^n$ be a rotationally symmetric manifold, let $g:\R\to\R$ be the function describing the metric of $N$ and let 
$\chi:S^{d-1}\to S^{n-1}$ be an eigenmap to eigenvalue $k\in\N$. A short calculation shows that the Dirichlet energy of an equivariant map $v=R_\chi h$ is given by 
$$E(v,B_R(0))=\frac12\int_{B_R(0)}\abs{\na v}^2 dx=\frac{c_d}2\int_0^R[\abs{h'}^2+\frac k{r^2}g^2(h)]r^{d-1}\, dr$$
for $c_d=\abs{S^{d-1}}$, the Hausdorff-measure of the $d-1$ dimensional unit sphere. 

In view of condition \eqref{g} the set of functions $h$ which induce equivariant maps with locally finite energy can be described by
\begin{defi}
Given $d\in\N$ and a ball $B_R=B_R(0)\subset \R^d$ we define
$$H^1_{rad}(B_R):=\{h:[0,R]\to \R: \, \int_0^R(\abs{h'}^2+\frac{h^2}{r^2} r^{d-1} \,dr<\infty\},$$
and set 
$$H_{rad}^1(\R^d):=\bigcap\limits_{R>0} H^1_{rad}(B_R).$$
\end{defi}
Observe that the equivariant function $R_\chi h:\R^d\to N$ is an element of $H_{loc}^1(\R^d)$ but not necessarily of $H^1(\R^d)$ if $h\in H_{rad}^1(\R^d)$. Let us also remark that the global energies $E(u(t),\R^d)$ of solutions of the harmonic map heat flow \eqref{HF} are in general infinite.
\\

Direct computations (see e.g.~\cite{Germain}) lead to the following characterisation of equivariant weak solutions of the harmonic map heat flow.
\begin{lemma}\label{lemma:equi} Consider a rotationally symmetric manifold $N^n$ with metric described by $g\in C^1(\R)$ and let $\chi:S^{d-1}\to S^{n-1}$ be a $k$-eigenmap.
\begin{enumerate}
\item[(i)] Let $u$ be an element of the energy-space \eqref{space} of the form $u=R_{\chi} h$ for a function $h:\R_0^+\times \R_0^+\to \R$. Then $u$ is a weak solution of \eqref{HF} if and only if $h$ solves the scalar partial differential equation
\begin{equation}
\label{PDE1}
h_t-[h_{rr}+\frac{d-1}{r}h_r-\frac{k}{r^2}G(h)]=0 \text{ on }\R_0^+\times \R_0^+
\end{equation}
in the sense of distributions.
\item[(ii)] Let $u$ be an element of the energy-space \eqref{space} that is of the form $u(x,t)=R_\chi h(\frac{x}{\sqrt{t}}),\, t>0$, for some  $h:\R_0^+\to \R$.\\
Then $u$ is a weak solution of \eqref{HF} if and only if $h$ solves the differential equation 
\begin{equation}
h''+\big(\frac{d-1}r+\frac r2\big)h'-\frac k{r^2}G(h)=0 \text{ on } (0,\infty).
\label{EX}
\end{equation}
\end{enumerate}
\end{lemma}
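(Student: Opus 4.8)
The plan is to reduce both parts to a single pointwise (distributional) identity for the tension field of an equivariant map, obtained by a computation in the coordinates $(s,\omega)$ on $N$. Recall the extrinsic weak formulation: a map $u$ in the energy space \eqref{space} is a weak solution of \eqref{HF} precisely when the distribution $T:=\partial_tu-\Delta u-\Gamma(u)(\na u,\na u)$ vanishes on $\R^d\times(0,\infty)$; this is meaningful since $\partial_tu\in L^2_{loc}$, $\Delta u\in H^{-1}_{loc}$, and $\Gamma(u)(\na u,\na u)\in L^1_{loc}$ because $\abs{\na u}^2\in L^1_{loc}$. Writing $\Delta u=\tau(u)+(\Delta u)^\perp$ with $(\Delta u)^\perp$ the component normal to $T_uN$, the Gauss equation gives $(\Delta u)^\perp=-\Gamma(u)(\na u,\na u)$, so in fact $T=\partial_tu-\tau(u)$. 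Hence $u$ is a weak solution of \eqref{HF} if and only if $\partial_tu=\tau(u)$ as distributions, and it remains to compute $\tau(R_\chi h)$.

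For this I would use the standard expression of the tension field in local coordinates. For $u=R_\chi h$ the system $\partial_tu=\tau(u)$ decouples along the two factors of the warped metric $ds^2+g^2(s)\,d\omega_{n-1}^2$: using the explicit Christoffel symbols of this metric together with $\abs{\na_{\mathbb{S}^{d-1}}\chi}^2=k$, one finds that the $\partial_s$-component reads $h_t=h_{rr}+\tfrac{d-1}{r}h_r-\tfrac{k}{r^2}G(h)$, which is exactly \eqref{PDE1}, while the component tangent to the lateral sphere $\mathbb{S}^{n-1}$ is a multiple of the tension field $\tau(\chi)$ of $\chi$ viewed as a map $\mathbb{S}^{d-1}\to\mathbb{S}^{n-1}$. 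Since an eigenmap satisfies $-\Delta_{\mathbb{S}^{d-1}}\chi=k\,\chi=\abs{\na\chi}^2\chi$, it is a harmonic map into $\mathbb{S}^{n-1}$, so $\tau(\chi)\equiv 0$ and the $\mathbb{S}^{n-1}$-component of $T$ vanishes identically. Therefore $T=0$ iff \eqref{PDE1} holds in $\mathcal{D}'\big((0,\infty)\times(0,\infty)\big)$. Everything here is legitimate at the distributional level because $u\in$\eqref{space} forces $h\in H^1_{rad}(\R^d)$, so $h$ — hence $G(h)$ and $g^2(h)$ — is continuous on $(0,\infty)$; moreover, since $d\geq 3$ the origin has vanishing $H^1$-capacity (an $H^{-1}$-distribution cannot be supported on $\{0\}\times(0,\infty)$), so passing between $\R^d\setminus\{0\}$ and $\R^d$ imposes no extra condition at $r=0$. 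This proves (i).

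Part (ii) then follows by inserting the self-similar ansatz $h(r,t)=\psi(\rho)$, $\rho=r/\sqrt t$, into (i). Using $\partial_t[\psi(\rho)]=-\tfrac{\rho}{2t}\psi'(\rho)$, $\partial_r[\psi(\rho)]=t^{-1/2}\psi'(\rho)$ and $\partial_{rr}[\psi(\rho)]=t^{-1}\psi''(\rho)$, substitution into the left-hand side of \eqref{PDE1} gives
\begin{equation*}
\partial_t[\psi(\rho)]-\partial_{rr}[\psi(\rho)]-\tfrac{d-1}{r}\partial_r[\psi(\rho)]+\tfrac{k}{r^2}G(\psi(\rho))=-\frac1t\Big[\psi''(\rho)+\big(\tfrac{d-1}{\rho}+\tfrac{\rho}{2}\big)\psi'(\rho)-\tfrac{k}{\rho^2}G(\psi(\rho))\Big].
\end{equation*}
As $t>0$ and $\rho\mapsto r$ is a smooth diffeomorphism for each fixed $t$, the left side vanishes as a distribution on $(0,\infty)\times(0,\infty)$ iff $\psi$ solves \eqref{EX} on $(0,\infty)$, which is the claim.

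The only genuinely delicate point is the vanishing of the $\mathbb{S}^{n-1}$-component of $\tau(R_\chi h)$: this is exactly where the eigenmap hypothesis on $\chi$ enters, and one has to keep careful track of how the mixed $s$–$\omega$ Christoffel terms recombine so that the surviving angular contribution is a multiple of $\tau(\chi)$, with no leftover $h'$ cross-terms. Everything else — the Gauss equation, the coordinate formula for the tension field, and the distributional bookkeeping justified by the $H^1_{rad}$ regularity — is routine; indeed the same computation is carried out in~\cite{Germain} in a closely related setting.
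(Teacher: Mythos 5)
Your argument is correct and proceeds by exactly the route the paper implicitly relies on: the paper does not present a proof but says the lemma follows from ``direct computations (see e.g.~\cite{Germain})'', and your derivation --- tension field of an equivariant map into the warped product, vanishing of the angular component because an eigenmap with constant energy density is harmonic into $\mathbb{S}^{n-1}$, the capacity argument near $r=0$, and the self-similar substitution --- is precisely that computation. The step you flag as delicate (cancellation of the mixed $s$--$\omega$ Christoffel contributions) is in fact automatic, since $\partial_i h$ is radial while $\partial_i\chi$ is tangent to spheres, so the cross-terms $g^{ij}\partial_i h\,\partial_j\chi^b$ vanish pointwise and the angular part of $\tau(R_\chi h)$ is simply $r^{-2}\tau(\chi)$; the rest of your reasoning and the change of variables for part (ii) are all correct.
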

Remark that we can rewrite equation \eqref{EX} in divergence-form as
\begin{equation}
 \label{div-form}
\frac{d}{dr}\Big(r^{d-1}e^{r^2/4} h'(r)\Big)=kr^{d-3}e^{r^2/4}G(h).
\end{equation}
It can be easily checked that a selfsimilar map $u(x,t)=R_\chi h(\frac{x}{\sqrt{t}})$ is an element of the energy-space if and only if $h\in H_{rad}^1(\R^d)$ and
\begin{equation}
\label{2.4}
\int_0^1(\sqrt{t})^{d-4}\int_1^{R/\sqrt{t}}\abs{h'}^2 r^{d+1}\,dr\,dt<\infty.
\end{equation}
At first glance the assumption $h\in H^1_{rad}(\R^d)$ imposes only a mild constraint on the behaviour of $h$ near $r=0$ while the condition \eqref{2.4} seems to seriously restrict the allowed behaviour at infinity. We will see later that the converse is true for solutions of equation \eqref{EX}. Indeed, the first derivative of each solution of \eqref{EX} decays sufficiently fast for \eqref{2.4} to be fulfilled, but most solutions of \eqref{EX} blow up as $r\to 0$ in such a way that $h\notin H^1_{rad}(\R^d)$. 
\\

Let us finally remark that the trace of a selfsimilar map $u(x,t)=R_\chi h(\frac{x}{\sqrt{t}})$ on the time slice $\R^d\times\{0\}$ is given by $u(x,0)=(s, \chi(\frac{x}{\abs{x}}))$ if $h$ converges to $s\in\R$ as $r\to \infty$.

\subsection{Characterisation of energy-minimising equator maps}
\label{section:class}
As remarked in \cite{Germain}, the criterion given in Proposition \ref{Prop3.1} is closely related to the value of the optimal constant in the Hardy inequality.
\begin{lemma}
\label{lemma:Hardy}
Let $d\geq 3$. Then $C_H=\frac{4}{(d-2)^2}$ is the optimal constant such that the Hardy inequality 
\begin{equation}
\label{Hardy}
\int_0^1w^2 r^{d-3}\,dr\leq C_H\int_0^1\abs{w'}^2r^{d-1}\,dr
\end{equation}
holds true for all $w\in H^1_{rad}(B_1)$ with $w(1)=0$.
\end{lemma}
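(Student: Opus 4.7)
The plan is to establish the Hardy inequality first, and then show sharpness by producing a near-extremising sequence whose quotient tends to $\frac{4}{(d-2)^2}$.

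For the inequality, I would first reduce to the dense class of smooth functions supported away from the endpoints, so that any boundary term vanishes trivially. For such a test function $w$, the key identity is
\begin{equation*}
\int_0^1 w^2 r^{d-3}\,dr = \frac{1}{d-2}\int_0^1 w^2 \frac{d}{dr}\bigl(r^{d-2}\bigr)\,dr = -\frac{2}{d-2}\int_0^1 w\,w'\, r^{d-2}\,dr,
\end{equation*}
obtained by integration by parts. Splitting the weight as $r^{d-2}=r^{(d-3)/2}\cdot r^{(d-1)/2}$ and applying Cauchy--Schwarz to the right-hand side gives
\begin{equation*}
\int_0^1 w^2 r^{d-3}\,dr \;\leq\; \frac{2}{d-2}\left(\int_0^1 w^2 r^{d-3}\,dr\right)^{\!1/2}\!\left(\int_0^1 \abs{w'}^2 r^{d-1}\,dr\right)^{\!1/2}\!.
\end{equation*}
Dividing by the first factor on the right and squaring yields \eqref{Hardy} with constant $C_H=4/(d-2)^2$. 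A standard density argument then extends the inequality to all $w\in H^1_{rad}(B_1)$ with $w(1)=0$; the natural approximation is by truncations $w_\eta(r):=\eta_\eta(r) w(r)$ where $\eta_\eta$ cuts off a neighbourhood of $r=0$, and the finiteness of $\int_0^1 w^2 r^{d-3}\,dr$ guarantees that one can choose a sequence of cut-off radii along which the boundary contribution at $0$ vanishes.

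For sharpness, I would test the inequality against the family
\begin{equation*}
w_\beta(r) := r^{\beta}-1,\qquad \beta\in\bigl(-\tfrac{d-2}{2},0\bigr),
\end{equation*}
which lies in $H^1_{rad}(B_1)$ (since $2\beta+d-2>0$) and satisfies $w_\beta(1)=0$. A direct computation gives
\begin{equation*}
\int_0^1 w_\beta^2\, r^{d-3}\,dr = \frac{1}{2\beta+d-2}-\frac{2}{\beta+d-2}+\frac{1}{d-2},\quad \int_0^1 |w_\beta'|^2\, r^{d-1}\,dr = \frac{\beta^2}{2\beta+d-2}.
\end{equation*}
Letting $\beta\to -\frac{d-2}{2}$, both integrals diverge as $\frac{c_\beta}{2\beta+d-2}$, and the ratio $\bigl(\int w_\beta^2 r^{d-3}\bigr)\big/\bigl(\int |w_\beta'|^2 r^{d-1}\bigr)\to 1/\beta^2\to 4/(d-2)^2$. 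This shows that no smaller constant can work, so $C_H=4/(d-2)^2$ is optimal.

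The main delicate point is the density/boundary step: one must justify that $w^2 r^{d-2}\to 0$ along some sequence $r\to 0$ for $w\in H^1_{rad}$ with $w(1)=0$. This follows from the integrability $\int_0^1 w^2 r^{d-3}\,dr <\infty$ (which forces $\liminf_{r\to 0} w^2 r^{d-2}=0$), and replacing $w$ by the truncations $w_\eta$ before passing to the limit keeps the left-hand side monotonically convergent by Fatou while the right-hand side is controlled. Apart from this, the proof is essentially a one-line Cauchy--Schwarz after the correct integration by parts, and the extremising family $r^\beta-1$ is the natural perturbation of the formal minimiser $r^{-(d-2)/2}$.
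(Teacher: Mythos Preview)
Your proof is correct and self-contained. The paper itself does not give a proof of this lemma: it simply writes ``For a proof of this result we refer to \cite{Vazquez-Zuazua}.'' So there is no argument in the paper to compare against; you have supplied the standard integration-by-parts plus Cauchy--Schwarz argument, together with the natural extremising family $r^\beta-1$, and both parts are sound. One small remark: your density step uses that $\int_0^1 w^2 r^{d-3}\,dr<\infty$, and in this paper that finiteness is built into the \emph{definition} of $H^1_{rad}(B_1)$, so there is no circularity.
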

For a proof of this result we refer to \cite{Vazquez-Zuazua}.

\begin{proof}[Proof of Proposition \ref{Prop3.1}]
The proof of (i) and (ii) follow directly from Hardy's inequality, as can be seen in \cite{Germain}.

Let us therefore assume that 
$$kG' (s)\geq \frac{(d-2)^2}{4}=-C_H^{-1} \text{ for all } s\in [s^\star-S,s^\star+S]$$ where $S:=\sqrt{kC_H}\cdot \norm{g}_\infty =\frac{2\sqrt{k}}{d-2}\cdot \norm{g}_\infty$. Using the quadratic Taylor expansion of $g^2$ around $s^\star$, we find for these values of $s$
$$k\big[g^2(s)-g^2(s^\star)\big]\geq -C_H^{-1}(s-s^\star)^2.$$
Remark that this estimate is trivially true if $\abs{s-s^\star}\geq S$.

The above Hardy inequality thus implies that for all $h\in H_{rad}^1(B_1)$ with $h(1)=s^\star$  
\begin{align*}
\int_0^1\big[\abs{h'}^2+\frac k{r^2}\big(g^2(h)-g^2(s^\star)\big)\big]r^{d-1}\, dr&\geq\, \int_0^1\big[\abs{(h-s^\star)'}^2-\frac{C_H^{-1}}{r^2}(h-s^\star)^2\big]r^{d-1}\,dr\\
&\geq\, 0
\end{align*}
and thus that $E(R_\chi h\,B_1)\geq E(u^\star,B_1)$.
\end{proof}

\section{The variational approach: proof of Theorem \ref{THM2}}

\label{section:Non-unique}
We prove the first non-uniqueness result, Theorem \ref{THM2} by variational methods. Contrary to the arguments used for the proof of Theorem \ref{THM1}, we do not require any restrictions on the manifold $N$ other than the general assumption \eqref{g}. Theorem \ref{THM2} is thus valid also for a large class of non-compact rotationally symmetric target manifolds.

\subsection{The variational problem}

Let $N$ be a rotationally symmetric manifold, $\chi$ an eigenmap and let $C_{s^\star}$ be an equator of $N$. Assume that the equator map $u^\star_{\chi, s^\star}$ is not $\chi$-energy-minimising. According to the discussion in section \ref{section:equiv} we need to establish the existence of a non-constant solution $h\in H^1_{rad}(\R^d)$ to equation \eqref{EX} with $\lim_{r\to \infty}h(r)=s^\star$ which satisfies condition \eqref{2.4}.\\

We consider the set 
$$\mathcal F:= \{f\in H^1_{rad}(\R^d): \text{supp}(f)\subset\subset [0,\infty)\}$$
and take its closure $\ov{\mathcal{F}}$ with respect to the norm
$$\norm{f}^2:=\int (\abs{f'}^2+\frac{\abs{f}^2}{r^2})r^{d-1}e^{r^2/4}\,dr.$$

Let us remark that condition \eqref{2.4} is trivially fulfilled for elements of $\ov{\mathcal F}$ and that functions in $\ov{\mathcal F}$ converge to zero as $r\to \infty$. 
In view of the divergence form \eqref{div-form} of equation \eqref{EX} we consider the variational integral
\begin{equation}
\label{defE}
\overline{E}(f):=\int_0^\infty \big[\abs{f'}^2+\frac{k}{r^2}\big(g^2(s^\star+f)-g^2(s^\star)\big)\big]\,r^{d-1}e^{r^2/4}\,dr
\end{equation}
on the reflexive space $(\ov{\mathcal{F}},\norm{\,\cdot\,})$ ($\overline{E}$ is finite on $\mathcal{F}$ since $g'(s^\star)=0$). We prove that this functional has the following properties
\begin{enumerate}
\item $\overline{E}(\cdot)$ is weakly lower semi-continuous and bounded from below on $(\ov{\mathcal{F}},\norm{\,\cdot\,})$.
\item If the equator map $u_{s^\star,\chi}^\star$ is not $\chi$-energy-minimising, then 
$$\inf\limits_{f\in\overline{\mathcal{F}}} \overline{E}(f)<0=\overline{E}(0).$$
\end{enumerate}
We therefore find that $\overline{E}$ achieves its global minimum for a function $f\in\ov{\mathcal{F}}$ that is not identically zero. Consequently $s^\star+f$ is a non-constant solution of \eqref{EX} that induces a selfsimilar weak solution of the harmonic map flow for initial data $u_0=u_{\chi,s^\star}^\star$ different from the time-independent equator map. 

It remains to prove the above claims about $\overline{E}$. 

\subsection{Proof of claim 1}

We use that $C_1:=\sup_{s\in \R} \abs{\frac{d^2}{ds^2} g^2(s)}<\infty$ by assumption \eqref{g} and estimate
$$g^2(s^\star+f)-g^2(s^\star)\geq -\min(C_1 f^2,g^2(s^\star)).$$
Given any $R>0$ and any 
$f\in \ov{\mathcal{F}}$, we thus obtain
\begin{align*}
 \overline{E}(f)&=\int_0^\infty \big[\abs{f'}^2+\frac{k}{r^2}\big(g^2(s^\star+f)-g^2(s^\star)\big)\big]\,r^{d-1}e^{r^2/4}\,dr\\
&\geq\int_0^\infty\abs{f'}^2r^{d-1}e^{r^2/4}\,dr-kC_1\int_R^\infty f^2r^{d-3}e^{r^2/4}\, dr\\
&\qquad -kg^2(s^\star)\int_0^Rr^{d-3}e^{r^2/4}\, dr\\
&\geq\,\int_0^\infty\abs{f'}^2r^{d-1}e^{r^2/4}\,dr-CR^{-2}\int_R^\infty f^2r^{d-1}e^{r^2/4}\, dr-C(R)
\end{align*}
for a constant $C(R)$ independent of $f$.

In the weighted space $\ov{\mathcal{F}}$ the Hardy inequality 
\begin{equation}
\label{Hardy2}
\int_0^\infty f^2(1+r^2)r^{d-3}e^{r^2/4}\, dr \leq C \int_0^\infty\abs{f'}^2r^{d-1}e^{r^2/4} \,dr
\end{equation}
holds true for a universal constant $C=C(d)$, see e.g. \cite{Vazquez-Zuazua}. Choosing the number $R>0$ in the above estimate large enough, we thus obtain a uniform lower bound for $\overline{E}$ on $\ov{\mathcal{F}}$. 
\\

Remark now that inequality \eqref{Hardy2} shows furthermore that an equivalent norm to $\norm{\,\cdot\,}$ on $\ov{\mathcal{F}}$ is given by $\vert\vert\vert f \vert\vert\vert^2:= \int \abs{f'}^2r^{d-1}e^{r^2/4}\,dr$. The weak lower semi-continuity of $\ov{E}$ then follows from the estimate
$$\int_{R}^\infty (g^2(s^\star+f)-g^2(s^\star))r^{d-3} e^{r^2/8}\leq \frac{C_1}{R^2} \norm{f}^2$$ 
and the lemma of Fatou applied on finite intervals $[0,R]$. \\

\subsection{Proof of claim 2}

In order to prove property (2) for $\ov{E}$, we define a family of weighted energies $(E_\lambda)_{\lambda\in[0,1]}$ on the space $\overline{\mathcal{F}}$ by 
$$E_\lambda(f):=\int_0^\infty \big[\abs{f'}^2+\frac{k}{r^2}\big(g^2(s^\star+f)-g^2(s^\star)\big)\big]r^{d-1}e^{\lambda r^2/4}\, dr.$$
Note the scaling 
$$E_\lambda(h)=\lambda^{-\frac{d-2}{2}}\cdot E_1\big(h(\frac{\cdot}{\sqrt{\lambda}})\big)=\lambda^{-\frac{d-2}{2}}\cdot\ov{E}\big(h(\frac{\cdot}{\sqrt{\lambda}})\big).$$
Since the equator map $u^\star=u_{s^\star,\chi}^\star$ is by assumption not energy-minimising, there exists a function $h\in H_{rad}^1(B_1)$ with $h(1)=0$ and  
$$E(R_\chi (s^\star+h),B_1)-E(u^\star,B_1)=\frac{c_d}{2}\int_0^1 \big[\abs{h'}^2+\frac{k}{r^2}\big(g^2(s^\star+h)-g^2(s^\star)\big)\big]r^{d-1}\, dr <0.$$ 

Extending $h$ by zero on $[1,\infty)$, we thus obtain that $E_0(h)<0$ and by continuity of $\lambda\mapsto E_{\lambda}(h)$ also $E_{\lambda}(h)<0$ for $\lambda>0$ small. Consequently 
$$\inf\limits_{f\in\overline{\mathcal F}} \ov{E}(f) = \lambda^{\frac{d-2}{2}}\inf\limits_{f\in\overline{\mathcal F}} E_\lambda(f)<0$$
as claimed.

\section{Properties of the associated ordinary differential equation}

\label{section:general}

\subsection{Existence, uniqueness, and asymptotic behaviour}
In this section we collect several important properties of solutions to the differential equation \eqref{EX} characterising selfsimilar solutions in the equivariant setting. We shall assume from now on that $N$ is compact and thus in particular that $g$, $g'$ and $g''$ are bounded periodic functions on $\R$.
\\

We first show that the behaviour of arbitrary solutions $h$ of \eqref{EX} for $r\to \infty$ can be described by
\begin{lemma}\label{lemma2} $ $
\begin{enumerate}
	\item[(i)] Let $h$ be any solution of \eqref{EX}. Then there exists a constant $C=C(h)$ such that 
	\begin{equation*}
	\abs{h'(r)}\leq \frac{C}{r^3}\text{ for all } r\geq 1.
	\end{equation*}
	\item[(ii)] This inequality holds true with a universal constant  
$\ov{C}=\ov{C}(g,k)$ for all solutions $h$ of \eqref{EX} with $\lim_{r\to 0}r\cdot h'(r)=0$.
\end{enumerate}
\end{lemma}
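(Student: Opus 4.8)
The plan is to exploit the divergence form \eqref{div-form}, namely $\frac{d}{dr}\bigl(r^{d-1}e^{r^2/4}h'(r)\bigr)=kr^{d-3}e^{r^2/4}G(h)$, and the fact that $G=gg'$ is bounded (since $N$ is compact). For part (i), fix a solution $h$ and set $\Phi(r):=r^{d-1}e^{r^2/4}h'(r)$. Integrating the identity from a fixed $r_0\geq 1$ to $r$ gives $\Phi(r)=\Phi(r_0)+k\int_{r_0}^r t^{d-3}e^{t^2/4}G(h(t))\,dt$, so $\abs{\Phi(r)}\leq \abs{\Phi(r_0)}+k\norm{G}_\infty\int_{r_0}^r t^{d-3}e^{t^2/4}\,dt$. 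The point is that $\int^r t^{d-3}e^{t^2/4}\,dt$ is comparable, for large $r$, to $r^{d-4}e^{r^2/4}$ (integrate by parts once, or simply note $\frac{d}{dt}\bigl(t^{d-4}e^{t^2/4}\bigr)=\tfrac12 t^{d-3}e^{t^2/4}+(d-4)t^{d-5}e^{t^2/4}$, the first term dominating). Hence $\abs{\Phi(r)}\lesssim_{h} r^{d-4}e^{r^2/4}$ for $r$ large, and dividing by $r^{d-1}e^{r^2/4}$ yields $\abs{h'(r)}\lesssim_h r^{-3}$. A slight care is needed because $\abs{\Phi(r_0)}$ and the implicit large-$r$ threshold depend on $h$; on the compact interval $[1,r_0]$ (or wherever the asymptotic regime has not yet kicked in) one simply absorbs $\sup_{[1,r_0]}\abs{h'}$ into the constant $C(h)$, which is allowed since (i) only claims an $h$-dependent constant.

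For part (ii), the improvement is to pin down $\Phi(r_0)$ — equivalently $\lim_{r\to 0}\Phi(r)$ — in terms of $g$ and $k$ only. Under the hypothesis $\lim_{r\to 0}r\,h'(r)=0$ we have $\lim_{r\to 0}\Phi(r)=\lim_{r\to 0}r^{d-1}e^{r^2/4}h'(r)=0$ when $d\geq 3$ (since $r^{d-1}e^{r^2/4}=r\cdot r^{d-2}e^{r^2/4}$ with $r^{d-2}e^{r^2/4}\to 0$ for $d=3$ bounded, and $\to 0$ for $d\geq 3$... more precisely $r^{d-2}\to 0$ for $d\ge 3$; one writes $\Phi(r)=(r\,h'(r))\cdot r^{d-2}e^{r^2/4}$ and both factors are controlled). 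Therefore $\Phi(r)=k\int_0^r t^{d-3}e^{t^2/4}G(h(t))\,dt$ with no free constant, giving $\abs{\Phi(r)}\leq k\norm{G}_\infty\int_0^r t^{d-3}e^{t^2/4}\,dt$. The remaining issue is that for small $r$, $\int_0^r t^{d-3}e^{t^2/4}\,dt$ behaves like $r^{d-2}$ (when $d\geq 3$ this integral converges at $0$), so $\abs{h'(r)}\leq k\norm{G}_\infty\, r^{1-d}e^{-r^2/4}\int_0^r t^{d-3}e^{t^2/4}\,dt$, and one checks this quantity is bounded by $\ov C(g,k)\,r^{-3}$ for $r\geq 1$ by splitting the integral at, say, $r=1$ and using the large-$r$ asymptotics of $\int_1^r t^{d-3}e^{t^2/4}\,dt\sim r^{d-4}e^{r^2/4}$ as above, the small-$r$ piece $\int_0^1$ contributing a fixed constant times $r^{1-d}e^{-r^2/4}$ which for $r\ge 1$ is itself $\lesssim r^{-3}$.

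The main obstacle — really the only subtle point — is the asymptotic evaluation of the weighted integral $\int^r t^{d-3}e^{t^2/4}\,dt$ and making sure all constants in part (ii) genuinely depend only on $g$ and $k$ (and $d$), not on the particular solution. This is handled cleanly by the integration-by-parts identity $\int_1^r t^{d-3}e^{t^2/4}\,dt = 2r^{d-4}e^{r^2/4} - 2\cdot 1^{d-4}e^{1/4} - 2(d-4)\int_1^r t^{d-5}e^{t^2/4}\,dt$, which shows $\int_1^r t^{d-3}e^{t^2/4}\,dt \leq 2r^{d-4}e^{r^2/4}$ for $r$ large once $d\geq 3$ (the last integral is lower-order and of the same sign considerations), and a crude bound for the remaining finite-$r$ range. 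I would also remark that part (ii) uses only $\norm{G}_\infty$ and the structural constant $d$, so $\ov C=\ov C(g,k)$ as stated; and that the hypothesis $\lim_{r\to0}rh'(r)=0$ is exactly what makes the boundary term at $0$ vanish, which is what upgrades the $h$-dependent constant of (i) to a universal one.
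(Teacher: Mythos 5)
Your proof is correct, but it goes by a genuinely different route than the paper's. You integrate the divergence form \eqref{div-form} directly, writing $\Phi(r)=r^{d-1}e^{r^2/4}h'(r)$, bounding $\Phi'$ by $k\norm{G}_\infty r^{d-3}e^{r^2/4}$, and then invoking the asymptotic $\int_1^r t^{d-3}e^{t^2/4}\,dt\lesssim_d r^{d-4}e^{r^2/4}$; the hypothesis $\lim_{r\to0}rh'=0$ kills the boundary term $\Phi(0^+)$ and upgrades the constant to a universal one. The paper instead introduces the Lyapunov quantity $V(r)=r^2\abs{h'}^2-kg^2(h)$, checks $V'\leq 0$ directly from \eqref{EX}, and then derives a Gronwall-type inequality $F'+\tfrac r2 F\leq C r^{-3}$ for $F:=r^2\abs{h'}^2=V+kg^2(h)$, integrating to \eqref{estf}; here the monotonicity of $V$ is what converts the hypothesis $rh'\to 0$ into the a priori bound $F(1)\leq k\norm{g}_\infty^2$. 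Both arguments ultimately need the same kind of weighted-integral asymptotic (the paper needs $\int_1^r t^{-3}e^{t^2/4}\,dt\lesssim r^{-4}e^{r^2/4}$, you need the analogue with exponent $d-3$). Your approach is arguably more elementary and transparent for this lemma in isolation, but the paper's choice is not accidental: the monotone quantity $V$ (and its cousin $\widetilde V$) is the structural tool reused throughout Section 5, e.g.\ in \eqref{V2}, Lemma~\ref{lemma5.2}, and Lemma~\ref{lemma5.3}, so establishing its monotonicity here pays dividends later. One small imprecision worth noting: your integration-by-parts justification of $\int_1^r t^{d-3}e^{t^2/4}\,dt\leq 2r^{d-4}e^{r^2/4}$ via the identity with the remainder term $-2(d-4)\int_1^r t^{d-5}e^{t^2/4}\,dt$ works immediately for $d\geq 4$ (the remainder has the favourable sign), but for $d=3$ the sign flips and the bound does not follow in one step — the estimate is still true (iterate once more, or absorb the smaller-order term for $t\geq 2$), but the phrase ``of the same sign considerations'' does not actually apply there.
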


\begin{proof}[Proof of Lemma \ref{lemma2}]

The quantity
\begin{equation}\label{V}
V(r)=V(h)(r):=r^2\abs{h'(r)}^2-kg^2(h(r))
\end{equation}
is decreasing for any non-constant solution $h$ of \eqref{EX}
with
\begin{align}
V'(r)&=\,-r^2\abs{h'(r)}^2\big[\frac{2(d-2)}{r}+r\big].
\label{V'1}
\end{align}
The possible behaviour of $F(r):=V(r)+kg^2(h(r))=r^2\abs{h'(r)}^2$ is thus constrained by
\begin{align*}
F'(r)+rF(r)&\leq 2kG'(h)\cdot h'(r)\leq \frac{r}2F(r)+\frac{C}{r^3}.
\end{align*}
Integrating the above inequality we then obtain that 
\beq \label{estf}
F(r)\leq (e^{1/4}F(1)+C)\cdot e^{-r^2/4}+\frac{C}{r^4}\eeq
which leads to the desired estimate of statement (i). This estimate is independent of the solution $h$ if $rh'(r)\to 0$ as $r\to 0$ since in this case $0\geq V(0)\geq V(r)\geq F(r)-k\norm{g}_{\infty}^2$ for every $r>0$, and thus in particular $|F(1)| \leq k \|g\|_\infty^2$.
\end{proof}
An important consequence of Lemma \ref{lemma2} is that each solution $h$ of \eqref{EX} converges as $r\to \infty$ in such a way that condition \eqref{2.4} is satisfied. In order to find selfsimilar solutions of the harmonic map heat flow we can therefore concentrate on finding solutions of \eqref{EX} that are elements of $H^1_{rad}(\R^d)$.\\

\begin{prop}
\label{Prop6}
Let $s_0$ be a local minimum of $g^2$ and let $a\in\R$. 
Then there exists a solution $h_a\in C^2((0,\infty))\cap C^0([0,\infty))$ of equation \eqref{EX} such that 
\begin{equation} \label{AB}
h_a(0)=s_0\text{ and } \lim\limits_{r\to 0}r^{-\gamma}(h_a(r)-s_0)=a,
\end{equation}
where $\gamma=\frac12(\sqrt{(d-2)^2+4kG'(s_0)}-(d-2))$. Furthermore, $r^{1-\gamma}h_a'(r)\to \gamma a$ as $r\to 0$, and $h_a\in H^1_{rad}(\R^d)$.
If condition (C2) is satisfied this solution is uniquely determined by \eqref{AB}.
\end{prop}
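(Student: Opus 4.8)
The plan is to construct $h_a$ by a shooting/fixed-point argument near $r=0$, where the equation \eqref{EX} has a regular-singular point. First I would linearise: near $s_0$ we have $G(s)=G'(s_0)(s-s_0)+O((s-s_0)^2)$ since $G(s_0)=g'(s_0)g(s_0)=0$ (as $s_0$ is a critical point of $g^2$, hence of $g$ unless $g(s_0)=0$, but in either case $G(s_0)=0$). Writing $h=s_0+w$, the linearised equation is the Euler-type ODE $w''+\frac{d-1}{r}w'-\frac{k}{r^2}G'(s_0)w=0$, whose indicial equation $\rho(\rho-1)+(d-1)\rho-kG'(s_0)=0$ has roots $\gamma$ (the positive one, as defined in the statement) and $\gamma_-=-\frac12(\sqrt{(d-2)^2+4kG'(s_0)}+(d-2))<0$; note $\gamma\geq 0$ precisely because $s_0$ is a \emph{local minimum} of $g^2$, which gives $G'(s_0)\geq 0$ on a manifold without the degeneracies excluded in Section~\ref{setting}, so the discriminant is $\geq (d-2)^2$ and $\gamma$ is real.

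The main step is to set up the integral equation. Using the divergence form \eqref{div-form}, or more directly rewriting \eqref{EX} around the singular point, one variation-of-parameters representation is
$$
h_a(r)=s_0+ar^\gamma+\frac{k}{\gamma-\gamma_-}\int_0^r\Big(\frac{s^{\gamma_-}r^\gamma-s^{\gamma}r^{\gamma_-}}{\cdot}\Big)s^{d-3}\big[G(h_a(s))-G'(s_0)(h_a(s)-s_0)\big]\,ds + (\text{lower order from }\tfrac r2 h'),
$$
where I would absorb the $\frac r2 h'$ term into the same Picard scheme since it is subleading as $r\to 0$. I would then run a contraction mapping argument on a small interval $[0,r_0]$ in a weighted space with norm $\sup_{r\leq r_0} r^{-\gamma}|h(r)-s_0-ar^\gamma|$ (plus a matching norm on the derivative), using that the nonlinear remainder $G(h)-G'(s_0)(h-s_0)=O((h-s_0)^2)$ gains a power of $r^\gamma$, and that the kernel contributes an extra $r^2$ from $s^{d-3}\,ds$ integrated against $s^\gamma$. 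This produces a local solution with the prescribed asymptotics \eqref{AB} and the companion statement $r^{1-\gamma}h_a'(r)\to\gamma a$; continuation to all of $(0,\infty)$ follows from the compactness of $N$ (so $g,g',g''$ bounded) which rules out finite-time blow-up, and membership in $H^1_{rad}(\R^d)$ near $0$ follows since $|h-s_0|\lesssim r^\gamma$ and $|h'|\lesssim r^{\gamma-1}$ give $\int_0(|h'|^2+h^2/r^2)r^{d-1}\,dr<\infty$ exactly when $d-2+2\gamma-2>-1$, i.e. $2\gamma>-d+1$, which holds since $\gamma\geq 0<\frac{d-1}{2}$; the behaviour at $\infty$ is covered by Lemma~\ref{lemma2}.

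For uniqueness under (C2), the subtlety is that when $\gamma=0$ (the two indicial roots would collide only if $G'(s_0)=0$ and then a logarithm appears, but generically $\gamma>0$) or more precisely when $\gamma$ is small, two solutions with the same leading coefficient $a$ could differ by a term behaving like the \emph{other} homogeneous solution $r^{\gamma_-}$, which is ruled out by demanding $h\in C^0([0,\infty))$, but they could a priori also differ at the next order. Here condition (C2), namely $G'(s_0)\geq\frac{d-1}{k}$, forces $4kG'(s_0)\geq 4(d-1)$ hence $\gamma\geq\frac12(\sqrt{(d-2)^2+4(d-1)}-(d-2))=\frac12(\sqrt{d^2}-(d-2))=1$, so $\gamma\geq 1$; this is exactly the margin that makes the weighted contraction above a genuine contraction on the full space of $C^0$ solutions (not just those with prescribed subleading data), because then the correction terms are $o(r^\gamma)$ uniformly. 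I expect the uniqueness part to be the main obstacle: one must show that any two $C^0$ solutions with the same $a$ coincide, which I would do by a Gronwall argument on the difference $w=h_a^{(1)}-h_a^{(2)}$ in the weighted norm, where the condition $\gamma\geq 1$ guarantees the relevant integral operator has norm $<1$ on $[0,r_0]$ and hence $w\equiv 0$ there, after which ODE uniqueness on $(0,\infty)$ finishes the argument.
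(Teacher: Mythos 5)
Your existence construction essentially mirrors the paper's, which invokes standard Frobenius/Picard arguments at the regular singular point $r=0$ and refers to \cite{Dis}, Appendix B.1, for the details. The indicial analysis, the observation that a local minimum of $g^2$ gives $G'(s_0)=\tfrac12(g^2)''(s_0)\geq 0$ and hence a real exponent $\gamma$, and the gain of a power of $r^\gamma$ from the quadratic remainder are all correct. (The arithmetic in your $H^1_{rad}$ check should read $2\gamma+d-3>-1$, i.e.\ $\gamma>1-d/2$, rather than $2\gamma>-d+1$, but the conclusion stands.)

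The uniqueness step is where the paper takes a genuinely different route and where your argument is underdeveloped. The paper does not run a Gronwall or contraction argument. It observes that (C2) forces $\gamma\geq 1$ and then applies a boundary point lemma (Theorem 1.4 of \cite{Protter-Weinberger}) to the rescaled difference $f(r)=r^{1-\gamma}\bigl(h_1(r)-h_2(r)\bigr)$: since both $h_i$ satisfy \eqref{AB} with the same $a$, one has $h_1-h_2=o(r^\gamma)$, hence $f=o(r)$, so $f(0)=f'(0)=0$, and the boundary point lemma then forces $f\equiv 0$. Your justification for where $\gamma\geq 1$ enters --- that it ``makes the weighted contraction a genuine contraction on the full space of $C^0$ solutions'' because the ``correction terms are $o(r^\gamma)$ uniformly'' --- does not isolate the mechanism: the quadratic remainder gains a factor $r^\gamma$ for every $\gamma>0$, so as written your argument gives no reason why the integral operator should fail to contract for $0<\gamma<1$. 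In addition, a competitor solution is only assumed continuous at $r=0$, so you do not a priori control the difference of derivatives $h_1'-h_2'$ in the weighted $C^1$-type norm on which your fixed-point scheme is based; a bootstrap step using the ODE is needed to recover $h_1'-h_2'=O(r^{\gamma-1})$ before the Gronwall estimate can close. Neither gap is necessarily fatal, but the precise role of $\gamma\geq 1$ remains opaque in your scheme, whereas in the paper's boundary-point-lemma approach it is used explicitly to obtain the required vanishing of $f$ and $f'$ at the origin.
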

Let us remark that the solutions $(h_a)$ of \eqref{EX} constructed in Proposition \ref{Prop6} 
induce a one-parameter family of selfsimilar weak solutions of the harmonic map flow. In fact, as we will prove in section \ref{section:THM1}, the only other solutions of \eqref{EX} which induce selfsimilar weak solutions of \eqref{HF} are the constant functions $h=s^\star$, for $C_{s^\star}$ an equator of $N$. 

This proposition can be obtained by well known methods in the theory of ordinary differential equations and is presented in detail in \cite{Dis}, appendix B.1. The assumption (C2) is necessary only for the proof of the uniqueness aspect and implies that the exponent $\gamma\geq1$ This allows us to apply a boundary point lemma such as Theorem 1.4 of \cite{Protter-Weinberger} to the rescaled difference $f(r)=r^{1-\gamma}(h_1-h_2)$ of two solutions of \eqref{EX}. We obtain that if $f(0)=f'(0)=0$ then $f$ must identically vanish and thus the two solutions coincide.

A good way to analyse the behaviour of the solutions $h_a$ is to compare them with the corresponding solutions of the equation 
\begin{equation}
\label{HM}
h''+\frac{d-1}rh'-\frac{k}{r^2} G(h)=0
\end{equation}
which represents the harmonic map equation in the equivariant setting. We let $\bar h$ be the solution of \eqref{HM} determined by 
\beq\bar h(0)=s_0 \text{ and } \lim\limits_{r\to 0}r^{-\gamma}(\bar h(r)-s_0)=1. \label{ABhbar}\eeq

The qualitative behaviour of these solutions was described by J\"ager and Kaul in \cite{Kaul} for the special case of  corotational harmonic maps from $\R^d$ to $S^d$. Based on their methods we obtain the following result.
\begin{prop}
\label{PropKaul}
Let $N$ be a compact, rotationally symmetric manifold and let $\chi$ be a $k$-eigenmap. Given any local minimum $s_0$ of $g^2$ we let $s^\star>s_0$ be the local maximum of $g^2$ to the right of $s_0$. Then the behaviour of the solution $\bar h$ of \eqref{HM} satisfying \eqref{ABhbar} can be described as follows.
\begin{enumerate}
\item[(i)] If $-4kG'(s^\star)\leq (d-2)^2$ and if condition (C1) is satisfied, then $\bar h$ is increasing and converges to $s^\star$ as $r\to \infty$. 
\item[(ii)] Otherwise $\bar h$ still converges to a local extremum $\tilde s$ of $g^2$ (not necessarily equal to $s^\star$). The convergence is monotone if $-4kG'(s)\leq (d-2)^2$ in a neighbourhood of $\tilde s$, while $\bar h$ oscillates around the level $s=\tilde s$ infinitely many times if $-4kG'(\tilde s)> (d-2)^2$.
\end{enumerate}
\end{prop}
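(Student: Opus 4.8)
The plan is to follow the strategy of Jäger and Kaul, adapting it from the corotational $S^d$ case to the general equivariant setting, the key tool being the monotonicity of the quantity $V(r)=r^2|\bar h'(r)|^2-kg^2(\bar h(r))$ along the harmonic map equation~\eqref{HM} (which is exactly the $r\to\infty$ formal limit of~\eqref{EX}, and for which $V$ is in fact non-increasing, as in Lemma~\ref{lemma2}). First I would record the short-time behaviour of $\bar h$ near $r=0$: by the analogue of Proposition~\ref{Prop6} for~\eqref{HM}, the solution determined by~\eqref{ABhbar} exists, is $C^2$ on $(0,\infty)$, and satisfies $\bar h(r)-s_0\sim r^\gamma$ with $\gamma>0$, so $\bar h$ leaves the minimal level $s_0$ monotonically increasing and $rV(r)\to -kg^2(s_0)<0$. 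Since $V$ is non-increasing, $V(r)\le -kg^2(s_0)<0$ for all $r>0$; in particular $k|g(\bar h(r))|^2\ge kg^2(s_0)+r^2|\bar h'|^2>g^2(s_0)$, so $\bar h$ can never return to a level where $g^2$ equals $g^2(s_0)$, and $\bar h$ stays trapped in the region between $s_0$ and the next value to the right where $g^2$ again takes the value $g^2(s_0)$.

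Next I would show $\bar h$ has a limit as $r\to\infty$ and that the limit is a critical point of $g^2$. The bound $r^2|\bar h'|^2 = V(r)+kg^2(\bar h)\le k\|g\|_\infty^2$ gives $|\bar h'(r)|\le C/r$, and combined with the divergence form of~\eqref{HM}, i.e. $\frac{d}{dr}(r^{d-1}\bar h')=kr^{d-3}G(\bar h)$, one gets $\bar h'\in L^2(r^{d-1}\,dr)$ near infinity from integrating $V'$, hence $\bar h(r)\to\tilde s$ for some $\tilde s$, and passing to the limit in~\eqref{HM} forces $G(\tilde s)=0$, i.e. $\tilde s$ is a critical point of $g^2$; since $\bar h$ is trapped above $g^2$-level $g^2(s_0)$, $\tilde s$ is in fact a local extremum of $g^2$ that is a local maximum (the trapping rules out $\tilde s=s_0$ and any other minimum at the same level). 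This already yields the first sentence of (ii). For the oscillation dichotomy near $\tilde s$, I would linearise~\eqref{HM} about $\tilde s$: writing $G(h)\approx G'(\tilde s)(h-\tilde s)$, the equation becomes an Euler-type ODE $w''+\frac{d-1}{r}w'-\frac{k G'(\tilde s)}{r^2}w=0$ with indicial exponents the roots of $\mu(\mu+d-2)=kG'(\tilde s)$; these are real iff $(d-2)^2+4kG'(\tilde s)\ge 0$, i.e. $-4kG'(\tilde s)\le(d-2)^2$, in which case $\bar h$ approaches $\tilde s$ monotonically (a standard Sturm-type argument promotes the linear monotone behaviour to the nonlinear solution once it is close enough, using again the sign of $V$ and the divergence form), whereas if $-4kG'(\tilde s)>(d-2)^2$ the exponents are complex and a Sturm comparison with the oscillatory Euler equation shows $\bar h-\tilde s$ changes sign infinitely often.

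Finally, for part (i), assume $-4kG'(s^\star)\le(d-2)^2$ and condition (C1), and suppose toward a contradiction that $\bar h$ is not increasing on all of $(0,\infty)$: then there is a first $r_0>0$ with $\bar h'(r_0)=0$ and $\bar h(r_0)=:s_1\in(s_0,s^\star]$. At such a point $V(r_0)=-kg^2(s_1)$, and from the divergence form $\frac{d}{dr}(r^{d-1}\bar h')|_{r_0}=kr_0^{d-3}G(s_1)$; since $\bar h$ was increasing on $(0,r_0)$ and this is the first critical point, $G(s_1)\le 0$, forcing $s_1$ to lie in a region where $g^2$ is non-increasing, hence $s_1\ge s^\star$, hence $s_1=s^\star$; but then $V(r_0)=-kg^2(s^\star)$ while for $r>r_0$ the solution would have to move toward some other extremum, and monotonicity of $V$ together with condition (C1) — which says $G'$ is minimised over $[s_1,s_2]$ precisely at $s^\star$, controlling how $G$ changes sign around the equator — rules out $\bar h$ leaving the level $s^\star$ again, pinning $\bar h\equiv s^\star$ for $r\ge r_0$, contradicting uniqueness for~\eqref{HM} with non-constant data. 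Hence $\bar h$ is strictly increasing on $(0,\infty)$, its limit $\tilde s$ exists by the previous paragraph, and being $\ge s_0$ and a local maximum reached monotonically from the left with no intervening extremum (again by (C1) and the $V$-trapping), it must equal $s^\star$.

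\textbf{Main obstacle.} The delicate point is part (i): ruling out that $\bar h$ overshoots $s^\star$, or gets stuck oscillating around, or converges to, some other local extremum — this is exactly where condition (C1) enters, and making the monotonicity argument airtight (in particular handling the borderline case $-4kG'(s^\star)=(d-2)^2$, where the indicial exponents of the linearisation coincide and an extra logarithmic term appears) requires the careful Jäger–Kaul phase-plane/comparison analysis rather than a soft argument. The oscillation statement in (ii) is more robust, resting on the Euler-equation comparison, but still needs care to transfer the linear behaviour to the nonlinear solution near $\tilde s$.
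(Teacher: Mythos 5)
The paper does not actually supply a proof of Proposition~\ref{PropKaul}: it states that the qualitative behaviour "was described by J\"ager and Kaul in \cite{Kaul}" and that "based on their methods we obtain the following result," deferring details to the thesis \cite{Dis}. Your proposal is therefore a reconstruction of the J\"ager--Kaul-style argument, and the structure (monotonicity of $V(r)=r^2|\bar h'|^2-kg^2(\bar h)$ for \eqref{HM}, trapping in $\{g^2>g^2(s_0)\}$, convergence via the $L^2$ bound on $r^{1/2}\bar h'$ coming from $V'=-2(d-2)r|\bar h'|^2$, divergence-form argument to force $G(\tilde s)=0$, and Euler/Sturm comparison near $\tilde s$ for the oscillation dichotomy in (ii)) is the right toolbox and essentially matches what the cited references do. Part (ii) as sketched is sound modulo routine care; your claim that $\tilde s$ must be a local \emph{maximum} is a slight over-claim that the paper does not make, but it is inessential.

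The genuine gap is in part (i), and it is the part you yourself flag as "the delicate point." Your contradiction argument begins by declaring that the first critical radius $r_0$ satisfies $\bar h(r_0)=s_1\in(s_0,s^\star]$, and then deduces from $G(s_1)\le 0$ that $s_1=s^\star$. But the containment $s_1\le s^\star$ is precisely what needs to be proved (ruling out overshoot), not something you may posit; and in fact your own computation shows it is false: at a first local maximum one has $G(s_1)<0$ strictly (otherwise $G(s_1)=0$ forces $\bar h\equiv s_1$ by ODE uniqueness), so $s_1$ lies \emph{strictly above} $s^\star$, in the region where $g^2$ is decreasing. So the scenario you contradict never occurs, the scenario that does occur ($s_1>s^\star$) is never addressed, and notably the two hypotheses $-4kG'(s^\star)\le(d-2)^2$ and (C1) are not actually used anywhere in the contradiction. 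What is needed at this step is a genuine comparison argument near and above $s^\star$ -- e.g.\ a rescaling comparison with the equidimensional Euler equation $w''+\frac{d-1}{r}w'-\frac{k\theta}{r^2}w=0$ with $\theta:=\max\{-G'(s):s\in[s_1,s_2]\}$, whose non-oscillatory regime $4k\theta\le(d-2)^2$ is exactly what (C1) together with $-4kG'(s^\star)\le(d-2)^2$ guarantees (this is the same mechanism as Proposition~\ref{CompPrinc}) -- to show $\bar h$ cannot cross $s^\star$ at all. As written, the argument for (i) is circular and the borderline case $-4kG'(s^\star)=(d-2)^2$, which you correctly identify as subtle, is not touched.
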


Noticing that the rescaled solution $H_a(r) := h_a(a^{-1/\gamma}r)$ solves
$$
\left\{
\begin{array}{l} H_a''(r) + \left( \frac{d-1}{r} + \frac{r}{2a^{1/\gamma}}\right) H'_a(r) - \frac{k}{r^2}G(H_a(r)) = 0\\
\lim_{r\rightarrow 0} r^{-\gamma} (H_a(r) - s_0) = 1 \end{array} \right.,
$$
we obtain by continuous dependence of solutions of differential equations on the coefficients the following lemma.

\begin{lemma}
\label{lemma7}
Let $(h_a)$ be the family of solutions to equation \eqref{EX} constructed in Proposition \ref{Prop6} and let $\bar h$  be the solution of \eqref{HM} satisfying \eqref{ABhbar}. 
\begin{itemize}
\item[(i)] Given any numbers $R_0>0$ and $\eps>0$, there exists $a_0>0$ such that
$$\sup\limits_{a\geq a_0}\sup\limits_{r\in [0,R_0]} \abs{h_a(a^{-1/\gamma}r)-\bar h(r)}<\eps.$$  
\item[(ii)] The map $\R\ni a\mapsto h_a(R)$ is continuous for every $R\in[0,\infty]$. 
\end{itemize}
\end{lemma}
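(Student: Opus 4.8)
The statement to be proved is Lemma~\ref{lemma7}: part (i) is a uniform (in $a$) continuous-dependence statement saying that the rescaled solutions $h_a(a^{-1/\gamma}r)$ converge to the harmonic-map profile $\bar h$ on compact sets as $a \to \infty$, and part (ii) is a continuity statement for the map $a \mapsto h_a(R)$, including $R = \infty$.

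The key observation, already recorded in the excerpt, is that $H_a(r) := h_a(a^{-1/\gamma}r)$ solves the ODE
$$
H_a''(r) + \Big(\frac{d-1}{r} + \frac{r}{2a^{1/\gamma}}\Big) H'_a(r) - \frac{k}{r^2}G(H_a(r)) = 0,
\qquad \lim_{r\to 0} r^{-\gamma}(H_a(r) - s_0) = 1,
$$
whereas $\bar h$ solves the $a = \infty$ version of exactly this problem, namely \eqref{HM} with the same initial asymptotics \eqref{ABhbar}. So (i) is literally a statement that solutions of a family of ODEs depending on the parameter $1/a^{1/\gamma} \in [0,\infty)$ depend continuously on that parameter, uniformly on compact $r$-intervals, at the parameter value $0$. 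The only subtlety is that the ODE is singular at $r = 0$ (the $\frac{d-1}{r}$ and $\frac{k}{r^2}G$ terms blow up) and the "initial condition'' is imposed as an asymptotic growth rate $r^{-\gamma}$ rather than a value at a point.

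\textbf{Step 1: reformulate as a regular problem away from the singular point.} I would first use Proposition~\ref{Prop6} (and the construction in \cite{Dis}, appendix B.1): near $r=0$ each $H_a$ is determined by its asymptotics, with $H_a(r) = s_0 + r^\gamma + o(r^\gamma)$ and $H_a'(r) = \gamma r^{\gamma-1} + o(r^{\gamma-1})$, and the same for $\bar h$. The construction by Picard iteration / contraction mapping near $r = 0$ produces, for each small $\delta > 0$, values $H_a(\delta), H_a'(\delta)$ that converge to $\bar h(\delta), \bar h'(\delta)$ as $a \to \infty$, locally uniformly in $\delta$ — this is because the fixed-point problem depends continuously (indeed smoothly, and uniformly on a neighbourhood of $r=0$) on the parameter $1/a^{1/\gamma}$, the coefficient of the extra drift term $\frac{r}{2a^{1/\gamma}} H'$. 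So on the singular piece $[0,\delta]$ the desired estimate holds by inspection of the construction.

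\textbf{Step 2: propagate by standard continuous dependence on $[\delta, R_0]$.} On $[\delta, R_0]$ the ODE is regular (coefficients smooth, $r$ bounded away from $0$), and $G = gg'$ is globally Lipschitz since $N$ is compact. Given $\eps > 0$, choose $\delta$ so small that $|H_a(r) - \bar h(r)| < \eps/2$ on $[0,\delta]$ for all $a \ge a_0$, and also so that $|H_a(\delta) - \bar h(\delta)| + |H_a'(\delta) - \bar h'(\delta)|$ is small; then the standard Gronwall-type continuous-dependence estimate for ODEs — here depending on two things simultaneously, the initial data at $r=\delta$ and the parameter $1/a^{1/\gamma}$ multiplying the bounded-on-$[\delta,R_0]$ term $\frac{r}{2}H'$ — gives $|H_a(r) - \bar h(r)| < \eps$ on all of $[0,R_0]$ for $a$ large. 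One has to be slightly careful that the Gronwall constant depends only on $R_0$, $\delta$, $\mathrm{Lip}(G)$, $d$, $k$ and on an a priori bound for $H_a'$ on $[\delta,R_0]$; such a bound follows from Lemma~\ref{lemma2}(ii) applied to $h_a$ (its hypothesis $r h_a'(r) \to 0$ as $r\to 0$ holds since $\gamma > 0$, and the constant $\bar C$ is universal), rescaled, together with the energy-type monotone quantity $V$ to control $H_a$ on bounded intervals. This yields (i).

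\textbf{Step 3: continuity of $a \mapsto h_a(R)$, including $R = \infty$.} For finite $R > 0$: by Proposition~\ref{Prop6} the map $a \mapsto (h_a(\delta), h_a'(\delta))$ is continuous for small fixed $\delta$ (same fixed-point argument, now with $a$ in a neighbourhood of an arbitrary finite value rather than $a\to\infty$), and then standard continuous dependence on initial data on $[\delta, R]$ gives continuity of $a \mapsto h_a(R)$; for $R = 0$ it is trivially $s_0$. The case $R = \infty$ is the only genuinely delicate point, and I expect it to be \textbf{the main obstacle}: one must show that the limit $h_a(\infty) := \lim_{r\to\infty} h_a(r)$ (which exists by Lemma~\ref{lemma2}, since $|h_a'(r)| \le C(h_a) r^{-3}$) depends continuously on $a$. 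The estimate from Lemma~\ref{lemma2}(ii) is the right tool: it gives, uniformly for $a$ in a bounded set, a bound $|h_a'(r)| \le \bar C r^{-3}$ for $r \ge 1$, hence $|h_a(\infty) - h_a(r)| \le \bar C r^{-2}/2$ uniformly in $a$. Combining this uniform tail control with the continuity of $a \mapsto h_a(R)$ for each fixed large finite $R$ (Step 3, finite case) gives, by a standard $\eps/3$ argument, continuity of $a \mapsto h_a(\infty)$. The point to be careful about is that $\bar C = \bar C(g,k)$ in Lemma~\ref{lemma2}(ii) is indeed independent of $a$ — which is exactly why part (ii) of that lemma was stated — so that the tail estimate is genuinely uniform; the slightly annoying bookkeeping is that the "initial'' value $F_a(1) = |h_a'(1)|^2$ entering \eqref{estf} must also be bounded uniformly for $a$ in a compact set, which again follows from $0 \ge V(h_a)(1) \ge F_a(1) - k\|g\|_\infty^2$.

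\textbf{Summary of difficulty.} Everything except the $R=\infty$ case of (ii) is routine ODE theory — construction near the singular point plus Gronwall — once one has unpacked the rescaling that turns \eqref{EX} into a one-parameter perturbation of \eqref{HM}. The $R=\infty$ case requires the uniform-in-$a$ decay estimate of Lemma~\ref{lemma2}(ii), which is why I would organize the proof around that lemma; with it, the $\eps/3$ argument closes things cleanly, and without it continuity at $r=\infty$ could genuinely fail.
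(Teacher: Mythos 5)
Your proposal is correct and takes essentially the approach the paper indicates (the paper itself only sketches the argument — rescale $h_a$ to $H_a(r)=h_a(a^{-1/\gamma}r)$, view the extra drift $\frac{r}{2a^{1/\gamma}}H_a'$ as a small perturbation of \eqref{HM}, and invoke continuous dependence on coefficients, with details referred to \cite{Dis}). You correctly identify the two non-trivial points: (a) continuous dependence must be pushed through the singular point $r=0$ where the data is an asymptotic growth rate rather than a Cauchy datum, handled by showing the fixed-point construction of Proposition~\ref{Prop6} depends continuously on the parameter; and (b) the case $R=\infty$ in (ii), for which the universal (i.e.\ $a$-independent) tail bound of Lemma~\ref{lemma2}(ii) is exactly the ingredient that lets an $\eps/3$ argument close. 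One small bookkeeping correction: for the a priori bound on $H_a'$ on $[\delta,R_0]$ needed for Gronwall, Lemma~\ref{lemma2}(ii) (which concerns $r\geq 1$ for the unrescaled $h_a$) is not the right tool for large $a$, since $a^{-1/\gamma}r<1$ there; what you want is the monotonicity of $V$ from \eqref{V}, giving $r|h_a'(r)|\leq\sqrt{k(\|g\|_\infty^2-g^2(s_0))}$ and hence a uniform bound on $|H_a'|$ away from $r=0$ — you do mention $V$ as an alternative, so this is a presentational slip rather than a gap.
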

Here we write for short $h_a(\infty)$ for the limit $\lim_{r\to \infty}h_a(r)$ which exists according to Lemma \ref{lemma2}. For the proof of these results we refer once more to \cite{Dis}.

\subsection{Comparison principles} \label{section:comp}
Comparison principles and maximum principles are very valuable tools to analyse the behaviour of solutions of differential equations. 
To study the properties of solutions of equation \eqref{EX} for general settings, we use
\begin{lemma}
\label{lemma4}
Let $G\in C^1((0,\infty))$ and $\phi\in C((0,\infty))$ be arbitrary fixed functions. We consider the differential operator 
\begin{equation}
\label{Tphi}
T_\phi (f):=f''+(\frac{d-1}{r}+\phi)f'-\frac{k}{r^2}\cdot G(f)
\end{equation}
on an interval $I=[r_1,r_2]\subset(0,\infty)$.
\begin{itemize}
\item[(i)] Suppose that $G\vert_{(a,b)}> 0$ on some interval $(a,b)\subset \R$. Then a non-constant function $f\in C^2(I, (a,b))$ with $T_\phi(f)\geq 0$ cannot achieve a local maximum in the interior of $I$.
\item[(ii)] Suppose that $G'\vert_{(c,d)}> 0$ on some interval $(c,d)\subset \R$. Let $f_1\neq f_2$ be two functions in $C^2(I,(c,d))$ with 
$$T_\phi(f_2)\leq T_\phi(f_1) \text{ on } I.$$ 
Assume that
$$c< f_2(r_1)\leq f_1(r_1)<d\text{ and } f_2'(r_1)\leq f_1'(r_1).$$
Then
$$f_2(r)<f_1(r) \text{ and } f_2'(r)<f_1'(r)$$
for all $r\in I$.
\end{itemize}
\end{lemma}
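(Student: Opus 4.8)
\textbf{Proof plan for Lemma \ref{lemma4}.}

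The plan is to prove both statements by classical maximum-principle arguments, writing the operator $T_\phi$ in a form to which the strong maximum principle and the Hopf boundary point lemma apply. For (i), suppose toward a contradiction that the non-constant $f\in C^2(I,(a,b))$ with $T_\phi(f)\ge 0$ attains an interior local maximum at $r_0\in(r_1,r_2)$. At such a point $f'(r_0)=0$ and $f''(r_0)\le 0$, so $T_\phi(f)(r_0)\le -\frac{k}{r_0^2}G(f(r_0))<0$ because $G>0$ on $(a,b)$ and $f(r_0)\in(a,b)$; this contradicts $T_\phi(f)\ge0$. To upgrade this to the assertion that $f$ cannot attain an interior local maximum \emph{at all} (even a non-strict one on a subinterval), one observes that if $f$ were constant on a subinterval the constant value $c_0$ would satisfy $0\le T_\phi(f)=-\frac{k}{r^2}G(c_0)<0$, again impossible; and if $f$ attains its maximum on $I$ at an interior point without being locally constant there, the strong maximum principle applied to the linear operator obtained by writing $G(f)=G(f)-G(M)\le 0$ near the maximum value $M$ (so that $f-M$ is a subsolution of a linear operator with nonnegative zeroth-order term handled appropriately) forces $f\equiv M$, contradicting that $f$ is non-constant. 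Concretely, on the set where $f$ is close to its max $M$, write $\frac{k}{r^2}G(f)=\frac{k}{r^2}\,\frac{G(f)-G(M)}{f-M}(f-M)+\frac{k}{r^2}G(M)$; since $G(M)>0$ the zeroth-order coefficient $c(r):=\frac{k}{r^2}\frac{G(f)-G(M)}{f-M}$ can be dealt with by the standard trick of subtracting $\frac{k}{r^2}G(M)>0$ to get a genuine subsolution inequality $L(f-M)\ge \frac{k}{r^2}G(M)>0$ for the linear operator $L w:=w''+(\frac{d-1}{r}+\phi)w'-c(r)w$, which is incompatible with $f-M$ attaining an interior maximum value $0$.

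For (ii), set $w:=f_1-f_2$ on $I$. Subtracting the two differential inequalities and using the mean value theorem on $G$, there exists $\xi(r)$ between $f_2(r)$ and $f_1(r)$ (hence in $(c,d)$) with
\begin{equation*}
w''+\Big(\frac{d-1}{r}+\phi\Big)w'-\frac{k}{r^2}G'(\xi(r))\,w \;=\; T_\phi(f_1)-T_\phi(f_2)\;\ge\;0 ,
\end{equation*}
so $w$ is a supersolution of a linear elliptic operator whose zeroth-order coefficient $-\frac{k}{r^2}G'(\xi(r))$ is strictly negative because $G'>0$ on $(c,d)$. We have $w(r_1)\ge 0$ and $w'(r_1)\ge 0$. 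First I would rule out $w(r_1)=0$ and $w'(r_1)=0$ simultaneously: in that case $w\equiv 0$ by uniqueness for the (linear, locally Lipschitz) ODE, contradicting $f_1\neq f_2$. Next, if $w(r_1)=0$ but $w'(r_1)>0$, then $w>0$ just to the right of $r_1$; and if $w(r_1)>0$ then $w>0$ near $r_1$ as well. In all admissible cases $w>0$ on some initial subinterval. I then claim $w>0$ on all of $I$: if not, let $r_*$ be the first zero of $w$ to the right of $r_1$; then $w\ge 0$ on $[r_1,r_*]$, $w(r_*)=0$, so $w$ attains its minimum value $0$ over $[r_1,r_*]$ at the interior-relative endpoint $r_*$, which by the Hopf boundary point lemma (applicable since the zeroth-order coefficient is $\le 0$, using $G'>0$) forces $w'(r_*)<0$ — contradicting that $w>0$ immediately to the left of $r_*$ would require $w'(r_*)\le 0$ is fine but combined with $w>0$ on $(r_1,r_*)$ and $w(r_*)=0$ we get $w'(r_*)\le 0$, while Hopf gives strict $w'(r_*)<0$; yet then just past $r_*$ the sign of $w$ would be negative, and one reruns the argument, or more cleanly: the strong maximum principle on $[r_1,r_*]$ already forces $w\equiv 0$ there unless the minimum $0$ is attained only at $r_*$, and then Hopf at $r_*$ gives $w'(r_*)<0$, so $w<0$ for $r$ slightly bigger than $r_*$, but choosing $r_*$ minimal we only used $[r_1,r_*]$; to close, note this shows $w$ has no interior zero, hence $w>0$ on $(r_1,r_2]$. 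Finally, from $w>0$ on $(r_1,r_2]$ and the differential inequality $w''=-\big(\frac{d-1}{r}+\phi\big)w'+\frac{k}{r^2}G'(\xi)w + (\text{nonneg.})$, one derives $w'(r)>0$ for all $r\in I$: indeed if $w'(r_0)\le 0$ at some $r_0$, then at a point where $w'$ has a nonpositive value and $w>0$ one integrates $\big(e^{\int(\frac{d-1}{r}+\phi)}w'\big)' = e^{\int(\frac{d-1}{r}+\phi)}\big(\frac{k}{r^2}G'(\xi)w + T_\phi(f_1)-T_\phi(f_2)\big)>0$ backwards from $r_1$, where $w'(r_1)\ge 0$, to conclude $w'>0$ throughout, with strict inequality because the right-hand side is strictly positive (as $w>0$ and $G'(\xi)>0$).

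The main obstacle — and the point requiring the most care — is the treatment of the \emph{non-strict} hypotheses at the left endpoint, i.e.\ the cases $f_2(r_1)=f_1(r_1)$ and/or $f_2'(r_1)=f_1'(r_1)$. A naive strong maximum principle would want strict separation of the data; the resolution is the Hopf lemma together with the ODE uniqueness theorem, which is why I singled out the subcase $w(r_1)=w'(r_1)=0$ (excluded by $f_1\neq f_2$) and then argued that any initial touching with $w'(r_1)>0$ immediately produces strict positivity. A secondary technical point is that the coefficient $\xi(r)$ (equivalently $c(r)=\frac{k}{r^2}\frac{G(f_1)-G(f_2)}{f_1-f_2}$, interpreted as $\frac{k}{r^2}G'(f_1)$ where $f_1=f_2$) is continuous on the compact interval $I\subset(0,\infty)$ and bounded, so all the classical linear-theory tools (strong maximum principle, Hopf lemma, ODE uniqueness — e.g.\ Theorem 1.4 in \cite{Protter-Weinberger}) apply without issue; the weights $r^{d-1}$, $e^{r^2/4}$ etc.\ are irrelevant here since $I$ is bounded away from $0$ and $\infty$.
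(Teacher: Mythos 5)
The paper itself does not spell out a proof of Lemma~\ref{lemma4}; it only says the result ``can be easily reduced to the classical maximum principle by the use of Taylor expansion'' and refers to the proof of Proposition~\ref{CompPrinc}. Your overall strategy — linearise the difference via the mean value theorem so that $w:=f_1-f_2$ satisfies $w''+(\frac{d-1}{r}+\phi)w'-\frac{k}{r^2}G'(\xi)w\geq0$ with a strictly negative zeroth--order coefficient, and then invoke maximum-principle tools — is exactly what the paper has in mind, and your part~(i) is fine (the one-line pointwise inequality at an interior local max already does the job; the ``upgrade'' paragraph is superfluous). However, your part~(ii) contains two genuine gaps.

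First, the treatment of the degenerate case $w(r_1)=w'(r_1)=0$ is wrong. You claim $w\equiv0$ ``by uniqueness for the (linear, locally Lipschitz) ODE'', but $w$ satisfies only a differential \emph{inequality} $L w = T_\phi(f_1)-T_\phi(f_2)\ge 0$, not an equation, so ODE uniqueness does not apply. In fact, one can take $f_2\equiv 0$, $G(f)=f$, and $f_1(r)=\eps\,(r-r_1-1)^3_+$ on a short interval $I=[r_1,r_2]$ with $r_2$ close to $r_1+1$: then $f_1\in C^2$, $T_\phi(f_1)\ge 0=T_\phi(f_2)$, $f_1(r_1)=f_2(r_1)$, $f_1'(r_1)=f_2'(r_1)$, $f_1\neq f_2$, yet $f_1\equiv f_2$ on $[r_1,r_1+1]$, so the claimed strict inequalities fail there. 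This shows both that your exclusion argument is incorrect and that the lemma as literally stated must be read with at least one strict inequality at $r_1$ (or with the conclusion holding only on $(r_1,r_2]$ and after the first point of separation); all uses of the lemma in the paper are of this non-degenerate type.

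Second, your Hopf-lemma step to establish $w>0$ does not actually close. At the putative first zero $r_*$ you derive $w'(r_*)<0$ from Hopf, and $w'(r_*)\le0$ from one-sided monotone approach to $0$; as you note yourself, these are compatible, so no contradiction results, and the paragraph ends circularly (``to close, note this shows $w$ has no interior zero'' is precisely what still needs an argument). The clean way — which you already have in hand for the derivative bound — is the divergence form: with $\mu(r):=\exp\int_{r_1}^r(\frac{d-1}{s}+\phi)\,ds$ one has $(\mu w')'=\mu\big(\frac{k}{r^2}G'(\xi)w+T_\phi(f_1)-T_\phi(f_2)\big)$, so $\mu(r)w'(r)=\mu(r_1)w'(r_1)+\int_{r_1}^r\mu\big(\frac{k}{r^2}G'(\xi)w+T_\phi(f_1)-T_\phi(f_2)\big)$. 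If $r_*$ were the first zero of $w$ in $(r_1,r_2]$ then $w\ge0$ on $[r_1,r_*]$ and the integral is nonnegative, and strictly positive since $w>0$ on a subinterval (using $w(r_1)>0$ or $w'(r_1)>0$); hence $w'(r_*)>0$, contradicting $w'(r_*)\le0$. The same identity then yields $w'>0$ at once. This is both simpler and fixes the gap; I'd recommend replacing the Hopf detour by this computation throughout.
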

This lemma can be easily reduced to the classical maximum principle by the use of Taylor expansion, see the proof of Proposition \ref{CompPrinc} below.

We remark that the condition $G'> 0$ is violated for the non-linearity $G=g\cdot g'$ of the equations \eqref{HM} and \eqref{EX} in a neighbourhood of $s^\star$ if $C_{s^\star}$ is an equator of $N$. Using the above lemma, we can thus compare solutions of these equations only as long as they map into an appropriate neighbourhood of a pole or a minimal sphere. In contrast, the following comparison principle applies to general solutions of \eqref{EX} if the considered setting satisfies the assumptions of Theorem \ref{THM1} (ii).
\begin{prop}[comparison principle]
\label{CompPrinc}
Let $k\in\N$, let $s_1<s_2$ and let $G\in C^1(\R)$ be any given function. Assume that 
$$4k\theta\leq (d-2)^2 \;\;\;\;\mbox{with}\;\;\;\;\theta:=\max\{-G'(s): s\in [s_1,s_2]\},
$$
and furthermore that $\phi(r)\geq c\cdot r$ for a constant $c=c(\phi)>0$.
Then the following comparison principle holds true for the operator 
$T_\phi$ defined by \eqref{Tphi}. 

Let $h_1$ and $h_2$ be two functions in $C^2((0,\infty),[s_1,s_2])$ such that 
$$T_\phi(h_1)\geq T_\phi(h_2)$$
and assume that 
\begin{equation}
\label{r0}
h_1(r_0)\geq h_2(r_0)\quad\text{ and } \quad h_1'(r_0)\geq h_2'(r_0)
\end{equation} for some $r_0> 0$.
Then either $h_1$ and $h_2$ coincide or 
\begin{enumerate}
\item[(i)] $\qquad \displaystyle{h_1(r)>h_2(r) \text{ for all } r>r_0}$ \\ $ $
\item[and] $ $ \\
\item[(ii)] $\qquad \displaystyle{\lim\limits_{r\to \infty}h_1(r)>\lim\limits_{r\to \infty}h_2(r)}.$
\end{enumerate}
\end{prop}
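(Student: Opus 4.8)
The plan is to reduce this comparison principle, which involves the modified operator $T_\phi$ with the extra first-order term $\phi$ growing at least linearly, to the classical maximum principle for linear second-order ODEs. The key device is the substitution already implicit in the proof of Proposition \ref{Prop3.1} and Lemma \ref{lemma:Hardy}: set $w(r):=r^{\alpha}(h_1(r)-h_2(r))$ for a suitable exponent $\alpha$ to be chosen so that the potential term coming from $-\frac{k}{r^2}(G(h_1)-G(h_2))$, linearized via the mean value theorem as $-\frac{k}{r^2}G'(\xi(r))(h_1-h_2)$ with $\xi(r)\in[s_1,s_2]$, is absorbed favorably. Writing $G'(\xi)\geq -\theta$ and using $4k\theta\leq(d-2)^2$, the natural choice is $\alpha=\frac{d-2}{2}$, exactly the exponent that makes the Hardy-type cancellation work; with this choice the resulting linear operator acting on $w$ has a nonpositive zeroth-order coefficient on intervals where $h_1,h_2$ stay in $[s_1,s_2]$.

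First I would record that $d(r):=h_1-h_2$ satisfies a linear ODE $d''+(\frac{d-1}{r}+\phi)d'-\frac{k}{r^2}q(r)d=g(r)\geq 0$ where $q(r)=G'(\xi(r))\geq -\theta$ and $g(r)=T_\phi(h_1)-T_\phi(h_2)\geq 0$ on $(0,\infty)$, all coefficients continuous on $(0,\infty)$. Then I would perform the substitution $w=r^{(d-2)/2}d$, compute the transformed equation, and check that the new zeroth-order coefficient equals $-\frac{1}{r^2}\big(k q(r)+\tfrac{(d-2)^2}{4}\big)+(\text{terms from }\phi)$; here I must be a little careful because $\phi$ contributes a term $\frac{d-2}{2}\cdot\frac{\phi(r)}{r}$ with the "wrong" sign. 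However, since $\phi(r)\geq c r>0$, this term has the sign that helps: it only makes the zeroth-order coefficient more negative, which is exactly what the maximum principle wants. Thus on any interval $[r_0,R]\subset(0,\infty)$ the operator governing $w$ satisfies the hypotheses of the classical maximum principle / Hopf boundary point lemma (e.g. Theorem 1.4 of \cite{Protter-Weinberger}, as already invoked after Proposition \ref{Prop6}).

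With this in hand, part (i) follows: from \eqref{r0} we get $w(r_0)\geq 0$ and $w'(r_0)=r_0^{(d-2)/2}d'(r_0)+\frac{d-2}{2}r_0^{(d-4)/2}d(r_0)\geq 0$; if $d(r_0)=d'(r_0)=0$ we additionally need to rule out $w\equiv 0$ unless $h_1\equiv h_2$, which is the Hopf-lemma alternative. If $d$ is not identically zero, the strong maximum principle forbids $w$ from attaining a nonpositive interior minimum, so $w>0$, hence $h_1>h_2$, for all $r>r_0$. For part (ii) I would use that by Lemma \ref{lemma2} both $h_i$ have limits as $r\to\infty$ and $h_i'(r)=O(r^{-3})$; the gap $d(r)$ is monotone-ish for large $r$ in the sense that once positive it stays bounded below — more precisely, integrating the divergence form $\frac{d}{dr}(r^{d-1}e^{\int\phi}d')=r^{d-1}e^{\int\phi}(\frac{k}{r^2}qd+g)$ and exploiting that $q\geq-\theta$ with the Hardy threshold, one shows $\liminf_{r\to\infty}d(r)>0$, giving the strict inequality of the limits. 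The main obstacle is organizing this last asymptotic step cleanly: unlike part (i), which is a one-line consequence of the classical maximum principle after the substitution, part (ii) requires quantitative control that the strict positivity of $w$ does not decay to zero at infinity, and here one genuinely uses both the linear growth of $\phi$ (which forces the weight $e^{\int\phi}$ to grow super-exponentially, so $d'$ is integrable against it) and the borderline Hardy condition $4k\theta\leq(d-2)^2$ — without the latter the zeroth-order term could overwhelm the estimate.
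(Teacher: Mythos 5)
Your treatment of part (i) matches the paper's: rescale $d:=h_1-h_2$ by $r^{\eta}$ with $\eta=\frac{d-2}{2}$, verify that the zeroth-order coefficient of the transformed differential inequality is strictly negative (using $4k\theta\leq(d-2)^2$ together with the $-\frac{\eta\phi}{r}$ contribution coming from $\phi$), and apply the classical maximum principle. The aside about the ``wrong sign'' of the $\phi$-term is a red herring --- that contribution is $-\frac{\eta\phi}{r}$ and is favourable from the outset --- but your conclusion is correct.

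Part (ii), however, is a genuine gap, and you yourself flag it as the main obstacle without actually overcoming it. Knowing that $w:=r^{(d-2)/2}d$ is positive and increasing only yields $d(r)\geq w(r_1)\,r^{-(d-2)/2}$, which tends to zero, so part (i) by itself does not preclude $\lim_{r\to\infty}d(r)=0$. Your proposed remedy --- integrating the divergence-form identity for $d$ and ``exploiting the Hardy threshold'' --- does not, as written, produce $\liminf_{r\to\infty} d>0$: the coefficient $q=G'(\xi)$ can be as negative as $-\theta$, so the right-hand side of that identity is not signed, and even in the favourable case $q\geq 0$ the resulting monotonicity of $r^{d-1}e^{\int\phi}\,d'$ says nothing about the limit of $d$ itself, since the weight grows super-exponentially while $d'$ merely decays. (You also invoke Lemma~\ref{lemma2}, which applies to solutions of \eqref{EX}, not to arbitrary $C^2$ functions satisfying a one-sided differential inequality; the proposition does not assume $h_1,h_2$ solve \eqref{EX}.) The paper closes this gap with a different device: it replaces the unbounded weight $r^\eta$ by the \emph{bounded} weight $\zeta(r):=\big(\frac{r}{C+r}\big)^\eta$, which tends to $1$ at infinity, and shows that the zeroth-order coefficient of the inequality satisfied by $f_2:=\zeta d$ is still negative on $[1,\infty)$ once $C$ is large enough. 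Here the linear growth of $\phi$ is used essentially, since the dominant negative contribution is $-\frac{\phi(r)}{r}\cdot\frac{C\eta}{C+r}$; without $\phi(r)\gtrsim r$ this term would be of the same order as the remaining ones and the sign condition could fail. The maximum principle then gives that $f_2$ is positive and increasing on $[r_0,\infty)$, and because $\zeta\to 1$ this directly yields $\lim_{r\to\infty}d(r)=\lim_{r\to\infty}f_2(r)>0$.
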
 
\begin{rem} 
By the characterisation of energy-minimising equator maps given in Proposition \ref{Prop3.1} the above comparison principle applies in particular to all solutions of \eqref{EX} if the setting $(N,\chi)$ satisfies the assumptions of Theorem \ref{THM1} (ii).
\end{rem}

\begin{proof}[Proof of Proposition \ref{CompPrinc}]
Let $h_1$ and $h_2$ be as in the statement of Proposition \ref{CompPrinc} and assume that $h_1\neq h_2$. In order to prove statement (i),
we consider the rescaled difference
$$f_1(r):=r^\eta\cdot(h_1(r)-h_2(r))$$
for $\eta>0$ to be determined later. Observe that $f_1$ satisfies the linear differential inequality
\begin{equation*}
 f_1''+(\frac{d-1-2\eta}{r}+\phi)f_1'+a_\eta(r) f_1\geq0
\end{equation*}
for 
\begin{equation*}
a_\eta(r)=\frac{\eta(\eta+1)}{r^2}-\frac{\eta}{r}(\frac{d-1}{r}+\phi)-\frac{k\cdot G'(\xi)}{r^2}< \frac1{r^2}[\eta^2-(d-2)\eta+k\theta]. \end{equation*}
Choosing $\eta=\frac{d-2}{2}$ in view of our assumption that $4k\theta\leq (d-2)^2$ we have $a_\eta<0$. Thus, if we assume that $f_1$ achieves a positive local maximum at a point $r_1\geq r_0$ a contradiction results; hence $f_1$ is an increasing, positive function on $[r_0,\infty)$ and statement (i) follows.\\

For the second part of the proof we let $\eta=\frac{d-2}{2}$ be as above and consider 
$$f_2(r):=\big(\frac{r}{C+r}\big)^\eta\cdot(h_1(r)-h_2(r))$$
for a (large) constant $C$ which is chosen later on. 

The first part of the proof implies that if $r_0<1$ then  
$$f_1'(1)=\eta(h_1-h_2)(1)+(h_1-h_2)'(1)\geq \delta$$ for some $\delta=\delta(h_1,h_2)>0$. For $f_2$ defined as above, we thus find not only that $f_2(1)\geq 0$, but also that
\begin{align*}
 f_2'(1)&=\,\big(\frac{1}{C+1}\big)^\eta\cdot (\frac{C\eta}{C+1}(h_1-h_2)(1)-(h_1-h_2)'(1))\\
&\geq\, \big(\frac{1}{C+1}\big)^\eta\cdot (\frac{C\delta}{C+1}-\frac{1}{C+1}\eta(h_1-h_2)(1))\geq 0 
\end{align*}
for $C$ sufficiently large. We can thus assume that $f_2(r_0)\geq 0$ and $f_2'(r_0)\geq 0$ for some $r_0\geq 1$.
The function $f_2$ satisfies the inequality
\begin{equation*}
f_2''+(\frac{d-1-2\eta}{r}+\frac{2\eta}{C+r}+\phi)f_2'+\tilde a_C(r) f_2\geq 0
\end{equation*}
where the coefficient $\tilde a_C(r)$ may be estimated as
$$\tilde a_C(r)\leq 
\frac{(d-3)\eta}{r(C+r)}+(\eta-\eta^2)\frac{2C+r}{(C+r)^2r}-\frac{\phi(r)}{r}\frac{C\eta}{C+r}.$$
On the interval $[1,\infty)$ the dominating term in the above bound is $-\frac{\phi(r)}{r}\frac{C\eta}{C+r}<0$ and thus $\tilde a_C(r)< 0$ if $C$ is large enough.
The same argument as above implies that $f_2$ is increasing and positive on $[r_0,\infty)$. Therefore
$$\lim\limits_{r\to \infty}h_1(r)-h_2(r)=\lim\limits_{r\to \infty}f_2(r)>0$$
as claimed. 
\end{proof}

\section{The ODE approach: proof of Theorem \ref{THM1}} \label{section:THM1}

\subsection{Proof of (i): existence of selfsimilar solutions}
We begin with the proof of the existence statement.
So let $s\in\R$ be any given number. If $s$ is a local extremum of $g^2$ then the constant function $h_s\equiv s$ induces a selfsimilar solution to \eqref{HF} for initial data $u_0(x)=(s,\chi(\frac{x}{\abs{x}}))$.
By symmetry we may thus assume that $G(s)>0$ and we denote by $s_0<s<s^\star$ the local minimum respectively local maximum of $g^2$ to the left respectively right of $s$.

The image by the continuous function 
$$L:a\mapsto \lim_{r\to\infty} h_a(r)$$
of $[0,\infty)$ is an interval, see Lemma \ref{lemma7}; we claim that it
contains the interval $[s_0,s^\star)$. Thus choosing $a_s>0$ such that $L(a_s)=s$, we obtain that $u(x,t):=R_\chi h_{a_s}$ is a solution of \eqref{HF} for the considered initial data $u_0(x)=(s,\chi(\frac{x}{\abs{x}}))$.

We first prove the corresponding claim for the continuous function 
$$M:[0,\infty) \ni a\mapsto \sup_{r\in \R} h_a(r).$$

Let us first remark that since $M(0)=s_0$, it is enough to show that to any given $\eps>0$ there is a number $a=a(\eps)>0$ such that $M(a)>s^\star-\eps$.

Let $\bar h$ be the solution of \eqref{HM} satisfying \eqref{ABhbar}. By Proposition \ref{PropKaul} the function $\bar h$ converges to a local extremum of $g^2$ as $r\to \infty$. Since the quantity $V$ defined in \eqref{V} is decreasing also for solutions of \eqref{HM}, we find that 
\beq
\label{V2}
-k g^2(s_0)=V(0)>V(r)\geq -k g^2(\bar h(r))
\eeq
for every $r>0$ and thus that $\bar h(r)>s_0$ for every $r>0$.

Consequently $\lim_{r\to \infty}\bar h(r)\geq s^\star$ and given any $\eps>0$ we may choose $R>0$ with $\bar h(R)>s^\star -\eps/2$. Lemma \ref{lemma7} then implies that $M(a)\geq \bar h_a(Ra^{-1/\gamma})\geq s^\star-\eps$ for $a$ large enough. This establishes the claim that $[s_0,s^\star)\subset M([0,\infty))$.
\\

It is now crucial to remark that $h_a$ is increasing if $M(a)<s^\star$ according to Lemma \ref{lemma4}. We thus find that the $L(a)=M(a)$ for these values of $a$ and the claim $[s_0,s^\star)\subset L([0,\infty))$ follows.

This concludes the proof of the existence statement of Theorem \ref{THM1}.

\subsection{Proof of (iii): multiplicity of solutions}

We now give the proof of the non-uniqueness result stated in Theorem \ref{THM1} (iii). So let $C_{s^\star}$ be an equator of a rotationally symmetric manifold such that the equator map $u^\star_{s^\star,\chi}$ is not even locally energy minimising, i.e. such that $-4kG'(s^\star)>(d-2)^2$.
Let $s_0<s^\star<s_1$ be the local minima of $g^2$ to the left and to the right of $s^\star$. We can assume without loss of generality that $g^2(s_0)\geq g^2(s_1)$.

Let $(h_a)_{a\geq 0}$ be the family of solutions to \eqref{EX} with $h_a(0)=s_0$ constructed in Proposition \ref{Prop6} and let $\bar h$ be the solution to equation \eqref{HM} satisfying \eqref{ABhbar}. Since the inequality \eqref{V2} is valid also for the functions $h_a$ we find that $s_0< h_a, \bar h<s_1$ on $(0,\infty)$ for each $a>0$. According to Proposition \ref{PropKaul} the function $\bar h$ thus converges to $s^\star$ as $r\to \infty$ while oscillating around the level $s=s^\star$ infinitely many times. 

We consider now the function 
\begin{equation}
\label{s.I}
[0,\infty)\ni a\mapsto I(a):=\#\{r>0:\, h_a(r)=s^\star\}
\end{equation}
counting the number of intersection points of the function $h_a$ with the level $s=s^\star$ of the equator $C_{s^\star}$. 

Lemma \ref{lemma7}, Lemma \ref{lemma2} and the above remark imply that $I(a)=I(0)=0$ for $a>0$ small enough while $I(a)\to \infty$ as $a\to \infty$.

The number $I(a)$ is however finite for each $a\in [0,\infty)$; in fact, we prove
\vspace{0.2cm}
\begin{lemma}\label{lemmaR}
For any rotationally symmetric manifold $N$, any equator $C_{s^\star}$ of $N$ and for any $k\in\N$ there exists a number $R>0$ such that the following holds true.
\begin{enumerate}
 \item[(i)] No solution $h$ of \eqref{EX} intersects the level $s=s^\star$ more than once on the interval $[R,\infty)$.
 \item[(ii)] If $h(r)=s^\star$ for some $r>R$, then $h$ cannot converge to $s^\star$ as $r\to \infty$.
\end{enumerate}
\end{lemma}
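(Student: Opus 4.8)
We may assume $h$ is non-constant (the constant solution $h\equiv s^\star$ being irrelevant for the application). The plan is to put \eqref{EX} into a Liouville normal form that is manifestly disconjugate near the level $s=s^\star$. Since $C_{s^\star}$ is an equator, $s^\star$ is a critical point of $g^2$, so $G(s^\star)=g(s^\star)g'(s^\star)=\tfrac12(g^2)'(s^\star)=0$, and hence $w:=h-s^\star$ solves \eqref{EX} in the (effectively linear) form $w''+p(r)\,w'-\tfrac{k}{r^2}b(r)\,w=0$ with $p(r):=\tfrac{d-1}{r}+\tfrac r2$ and $b(r):=\int_0^1 G'(s^\star+t\,w(r))\,dt$, so that $|b|\le\|G'\|_\infty=:\Lambda<\infty$ by \eqref{g} (recall $(g^2)''=2G'$). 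The substitution $w=e^{-P}u$ with $P:=\tfrac12\int p=\tfrac{d-1}{2}\log r+\tfrac{r^2}{8}$ kills the first-order term and gives $u''=c(r)u$ where
\[
c(r)=\frac{r^2}{16}+\frac d4+\frac{(d-1)(d-3)+4k\,b(r)}{4r^2},
\]
so there is $R_1=R_1(d,k,\Lambda)$ with $c(r)\ge \tfrac{r^2}{16}>0$ for $r\ge R_1$.

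For \textbf{(i)}: on $[R_1,\infty)$ the equation $u''=c(r)u$ with $c>0$ is disconjugate --- a nontrivial solution vanishing at $r_1<r_2$ there would attain an interior extremum $r_m$, where $u''(r_m)=c(r_m)u(r_m)$ has the same sign as $u(r_m)\neq0$, contradicting that $r_m$ is an extremum (and $u\equiv 0$ on a subinterval forces $h\equiv s^\star$ by ODE uniqueness). Since $e^{-P}$ is everywhere positive, $w$ has the same zeros as $u$, so $h$ meets the level $s^\star$ at most once on $[R_1,\infty)$; thus (i) holds for any $R\ge R_1$.

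For \textbf{(ii)}: suppose $h(r_1)=s^\star$ for some $r_1>R$ and $h\to s^\star$ as $r\to\infty$; we seek a contradiction. By (i), $w$ has a constant sign on $(r_1,\infty)$, say $w>0$, and $w'(r_1)=h'(r_1)>0$ (else $w(r_1)=w'(r_1)=0$ gives $h\equiv s^\star$). We would compare $u=e^{P}w$ with the explicit positive function $\zeta:=e^{-P}=r^{-(d-1)/2}e^{-r^2/8}$, which satisfies $c\zeta-\zeta''=\big(\tfrac12+\tfrac{k\,b(r)-(d-1)}{r^2}\big)\zeta>0$ for $r\ge R_2(d,k,\Lambda)$, so $\zeta$ is a strict supersolution there; set $R:=\max(R_1,R_2)$. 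The modified Wronskian $W:=u'\zeta-u\zeta'$ then satisfies $W'=u''\zeta-u\zeta''=u(c\zeta-\zeta'')>0$ on $(r_1,\infty)$ (as $u=w/\zeta>0$ there), while $W(r_1)=u'(r_1)\zeta(r_1)=w'(r_1)>0$; hence $W\ge w'(r_1)>0$ on $[r_1,\infty)$. Since $(w\zeta^{-2})'=(u/\zeta)'=W\zeta^{-2}\ge w'(r_1)\zeta^{-2}$ and $w(r_1)=0$, integrating from $r_1$ and using $\zeta^{-2}=r^{d-1}e^{r^2/4}$ together with $\int_{r_1}^{r}s^{d-1}e^{s^2/4}\,ds\gtrsim r^{d-2}e^{r^2/4}$ (an integration by parts, valid since $d\ge3$) yields $w(r)\gtrsim r^{-1}$ for large $r$. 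But Lemma~\ref{lemma2}(i) gives $|h'(r)|\le C(h)r^{-3}$ for $r\ge1$, and since $w\to0$ this forces $w(r)=-\int_r^\infty h'(s)\,ds\le C(h)r^{-2}/2$. The two bounds $r^{-1}\lesssim w(r)\lesssim r^{-2}$ for large $r$ are incompatible --- the required contradiction. (If $w<0$ on $(r_1,\infty)$, apply the same argument to $-w$, which solves the same equation with the same coefficient $b$.)

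\textbf{Main obstacle.} The delicate point is that the linearisation of \eqref{EX} at the level $s^\star$ has one bounded solution alongside one decaying like $e^{-r^2/4}$, and the Gaussian weight $e^{\pm r^2/8}$ of the Liouville change of variables cancels these exactly, so that neither a crude energy estimate nor a WKB heuristic settles (ii). The gain comes from the modified Wronskian $W$: its sign is pinned down by the boundary zero $w(r_1)=0$ (which makes $W(r_1)=h'(r_1)>0$), it is monotone because $\zeta=e^{-P}$ is a strict supersolution, and the resulting lower bound $w\gtrsim 1/r$ is incompatible with the a priori decay $|h'|\lesssim r^{-3}$ of Lemma~\ref{lemma2}. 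Part (i), by contrast, is routine once the disconjugacy threshold $R_1$ has been identified.
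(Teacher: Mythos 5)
Your proof is correct, and it takes a genuinely different route from the paper's. The paper's argument runs through the comparison principle (Proposition~\ref{CompPrinc}): it fixes a ``fictitious dimension'' $D\ge d$ with $4k\Theta\le (D-2)^2$, sets $R=2\sqrt{D-d}$, and compares the given solution $h$, on any maximal increasing interval $(r_0,r_1)$ with $r_0>R$, against the increasing solutions $f_a$ of the $D$-dimensional harmonic-map ODE \eqref{HM} (with the same nonlinearity $G$), which are supersolutions of the auxiliary operator $\widetilde T_{r/4}$. This forces $h<f_a<s^\star$ and $\lim h<\lim f_a=s^\star$, giving both (i) and (ii) in one stroke. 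Your argument instead linearises around the equator via $w=h-s^\star$ and $G(h)=b(r)\,w$ (using $G(s^\star)=0$ and $\sup|G'|<\infty$ from~\eqref{g}), applies the Liouville substitution $u=e^{P}w$ to reach a Schr\"odinger form $u''=c(r)u$, reads off disconjugacy from $c>0$ for part (i), and for part (ii) uses the modified Wronskian $W=u'\zeta-u\zeta'$ with $\zeta=e^{-P}$ as a strict supersolution, producing the lower bound $w\gtrsim r^{-1}$ which contradicts the decay $w\lesssim r^{-2}$ inherited from Lemma~\ref{lemma2}(i). What each buys: the paper's route is uniform with the comparison machinery used throughout Section~6 and yields the explicit threshold $R=2\sqrt{D-d}$; yours is self-contained classical Sturm theory, avoids the auxiliary $D$-dimensional equation and Proposition~\ref{PropKaul} entirely, but does lean on the a~priori decay of $h'$ from Lemma~\ref{lemma2}(i), which the paper's proof of this lemma does not need. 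One small remark: you correctly excise the trivial constant solution $h\equiv s^\star$ at the outset; this is implicitly understood in the paper's statement but is worth flagging as you did, since for that solution part (i) fails vacuously.
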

\vspace{0.2cm}
\begin{proof}
The key idea is to compare a given solution $h$ of \eqref{EX} with supersolutions of an appropriate differential equation for which the comparison principle is valid. 
So let $N$ be any rotationally symmetric manifold, let $C_{s^\star}$ be an equator of $N$ and let $k\in\N$.

We set $\Theta:=\max_{s\in\R}-G'(s)$ for the function $G=g\cdot g'$ and choose $D\geq d$ such that 
$$4k\Theta\leq (D-2)^2.$$
We claim that Lemma \ref{lemmaR} holds true for $R:=2\sqrt{D-d}.$

So let $h$ be a solution of \eqref{EX} with $h(r)=s^\star$ for some $r\geq R$. By symmetry we can assume that $h'(r)<0$. The claim is obviously true if $h$ is decreasing on all of $[r,\infty)$.
Suppose therefore that $h$ achieves a local minimum at some point $(r_0,h(r_0))$, $r_0>R$.

We now consider the solution $f$ of
\begin{equation}
\label{eqf}
f''+\frac{D-1}{r} f'-\frac{k}{r^2}G(h)=0
\end{equation}
with $f(0)=s_0$ and $\lim_{r\to 0}r^{-\Gamma}(f(r)-s_0)=1$, for $\Gamma:=\frac12(\sqrt{(D-2)^2+4kG'(s_0)}-(D-2))>0$. As usual, $s_0<s^\star$ denotes the local minimum of $g^2$ to the left of $s^\star$.

We should remark here that \eqref{eqf} does not necessarily represent the harmonic map equation in a new geometric setting since $k$ is in general no eigenvalue of $\Delta_{\mathbb{S}^{D-1}}$. Nonetheless the existence of $f$ still follows from standard methods. Furthermore the characterisation of solutions given by Proposition \ref{PropKaul} remains valid for equation \eqref{eqf}. 
The solutions $f_a(r)=f(a^{1/\Gamma}r)$, $a>0$ of \eqref{eqf} are thus increasing on $(0,\infty)$ and converge to $s^\star$ as $r\to \infty$. Since $h(r_0)<s^\star$ we find  
$$h(r_0)<f_a(r_0) \, \text{ and } h'(r_0)=0<f_a'(r_0)$$
for $a$ large enough.

Since $f_a$ is an increasing solution of \eqref{eqf}, it satisfies $\widetilde T_{r/4}(f_a)\geq 0$ on all of $(0,\infty)$ for the operator 
$$\widetilde T_{r/4}(f):=f''+(\frac{D-1}{r}+\frac{r}4) f'-\frac{k}{r^2}G(f).$$
On the other hand, let $r_1\in (r_0,\infty]$ be the maximal number such that $h$ is increasing on $(r_0,r_1)$. By our choice of $R$ and the assumption that $r_0>R$ we then find that $\widetilde T_{r/4}(h)\leq 0$ on $(r_0,r_1)$. Since the operator $\widetilde T_{r/4}$ satisfies the assumptions of the comparison principle, we find 
$$h\leq f_a<s^\star \text{ on } (r_0,r_1).$$
However, according to Lemma \ref{lemma4} the function $h$ cannot achieve a local maximum at $r_1$ unless $h(r_1)>s^\star$. Therefore $r_1= \infty$ and $h<s^\star$ on $(r_0,\infty)$. Finally, the comparison principle implies $\lim_{r\to \infty} h(r)<\lim_{r\to \infty} f_a(r)=s^\star.$
\end{proof}
The connection between the properties of the function $I(\cdot)$ defined in \eqref{s.I} and the existence of multiple solutions to the initial value problem \eqref{HF}, \eqref{IV} is given by
\begin{lemma}
\label{intersect}
The function $I:[0,\infty)\to \N_0$ defined in \eqref{s.I} has the following properties if $N$, $\chi$ and $C_{s^\star}$ satisfy the assumptions of Theorem \ref{THM1} (iii).
\begin{enumerate}
\item[(i)] $I$ is subcontinuous on $[0,\infty)$\footnote{i.e. for every $a_0\in [0,\infty)$ and every sequence $a_n\to a_0$ we have $I(a_0)\leq \underline{\lim}_{n\to \infty} I(a_n)$} and if $a_0$ is a point of discontinuity of $I(\cdot)$ then
$$ \underset{a\to a_0}{\ov{\lim}} I(a)=I(a_0)+1$$
and 
$$\lim_{r\to \infty}h_{a_0}(r)=s^\star.$$
\item[(ii)] For any $n\in \N_0$ there is number $A_n>0$ with $I(A_n)=n$ such that the corresponding solution $h_{A_n}$ of \eqref{EX} converges to $s^\star$ as $r\to \infty$.
\item[(iii)] The union $S_{2k}\cup S_{2k+1}$ of the sets 
$$S_n:=\{\lim_{r\to \infty} h_a(r):\, I(a)=n\},\quad n\in\N_0,$$
is a neighbourhood of $s^\star$ for every $k\in \N_0$.
\end{enumerate}
\end{lemma}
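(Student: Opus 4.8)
The plan is to extract all three statements from the structural facts already established about the family $(h_a)$: the comparison-type estimates of Lemma \ref{lemmaR}, the continuous dependence in $a$ recorded in Lemmas \ref{lemma7} and \ref{lemma2}, and the decay $V(h)$ is monotone (equation \eqref{V'1}), which in particular forces a non-constant solution to cross the level $s=s^\star$ transversally whenever it reaches it (since $V(h)<-kg^2(s^\star)$ at such a point, so $h'\neq 0$ there). For (i), I would fix $a_0\in[0,\infty)$ and split according to where the finitely many intersection points of $h_{a_0}$ with $s^\star$ lie. By Lemma \ref{lemmaR}(i) at most one of them lies in $[R,\infty)$; the rest lie in the compact interval $(0,R]$. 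On $(0,R]$ each crossing is transversal, hence stable under the uniform convergence $h_a\to h_{a_0}$ on compacta guaranteed by Lemma \ref{lemma7}(ii) (after the rescaling $r\mapsto a^{1/\gamma}r$, or directly by continuous dependence on the interval $[\delta,R]$ and the asymptotics near $0$ from Proposition \ref{Prop6}). So each of these $I(a_0)$-or-so crossings persists for $a$ near $a_0$, giving subcontinuity $I(a_0)\le\underline\lim I(a_n)$. For the quantitative jump: an extra crossing can only appear from the single ``last'' crossing in $[R,\infty)$, or from behaviour at infinity. If $h_{a_0}$ does not converge to $s^\star$ then its limit $\lim_{r\to\infty}h_{a_0}(r)$ is bounded away from $s^\star$ (Lemma \ref{lemma2}), the crossing in $[R,\infty)$ — if any — is transversal and isolated, and by continuity of $a\mapsto h_a(R')$ for all $R'\le\infty$ (Lemma \ref{lemma7}(ii)) no new crossing can be created near infinity; hence $I$ is continuous at $a_0$. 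Thus a discontinuity forces $\lim_{r\to\infty}h_{a_0}(r)=s^\star$, and then Lemma \ref{lemmaR}(ii) shows the crossing responsible lies in $(0,R]$... more precisely, the nearby solutions $h_a$ must themselves converge away from $s^\star$ (else they would be forced to cross once more in $[R,\infty)$ by Lemma \ref{lemmaR}(ii) being violated), and an elementary count using transversality and the shape of $h_{a_0}$ near its terminal crossing shows exactly one crossing is gained, i.e. $\overline\lim_{a\to a_0}I(a)=I(a_0)+1$.

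For (ii), recall from the discussion preceding the lemma that $I(a)=0$ for small $a>0$ and $I(a)\to\infty$ as $a\to\infty$, because $\bar h$ oscillates infinitely often about $s^\star$ (Proposition \ref{PropKaul}(ii)) and $h_a(a^{-1/\gamma}\cdot)\to\bar h$ locally uniformly. Given $n\in\N_0$, set $A_n:=\sup\{a:\ I(a)\le n\}$, which is finite and positive. By subcontinuity $I(A_n)\le n$; by the jump structure in (i), $I$ cannot jump \emph{down} from a value $\le n$ to something larger as $a\uparrow A_n$ without passing through... actually since $I$ takes values $\ge n+1$ immediately above $A_n$ and is subcontinuous, the only possibility consistent with (i) is that $A_n$ is a discontinuity point with $I(A_n)=n$ and $\overline\lim_{a\to A_n}I(a)=n+1$; in particular $\lim_{r\to\infty}h_{A_n}(r)=s^\star$. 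This is exactly the claim.

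For (iii), consider the maps $L_n:\{a:I(a)=n\}\to\R$, $a\mapsto\lim_{r\to\infty}h_a(r)$, whose images are the sets $S_n$. The set $\{I=n\}$ need not be an interval, but by (i) its closure differs from itself only by discontinuity points $a_0$ of $I$, at which $L(a_0)=s^\star$; and $L$ is continuous on all of $[0,\infty)$ (Lemma \ref{lemma7}(ii)). Therefore the closure $\overline{S_n}$ is connected — it is the continuous image of a set whose closure is covered by finitely many... here I would argue on each maximal interval $J$ of $\{I\le 2k+1,\ I\ge 2k\}$-type: between a discontinuity producing level $2k$ and the next producing level $2k+1$, the solution limit $L(a)$ varies continuously and passes through $s^\star$ (the value at the endpoints). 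Combining the level-$2k$ value $L=s^\star$, the level-$2k+1$ value $L=s^\star$, and continuity of $L$ in between, one gets that $S_{2k}\cup S_{2k+1}$ contains a full two-sided neighbourhood of $s^\star$: on one side from the monotone approach just before a crossing is created, on the other from just after. The main obstacle is this last step — turning the combinatorial information about $I$ and the continuity of $L$ into the statement that $s^\star$ is an \emph{interior} point of $S_{2k}\cup S_{2k+1}$, rather than merely a boundary point. I would handle it by a careful sign analysis: near a value $a_0$ where a new crossing is born, $h_{a_0}$ touches $s^\star$ from one side at infinity, and the one-sided limits $\lim_{a\to a_0^\pm}L(a)$ lie on opposite sides of $s^\star$ precisely because the parity of the number of crossings governs from which side $h_a$ approaches its limit; this, together with the intermediate value theorem applied to $L$ on $\{I=2k\}$ and on $\{I=2k+1\}$ separately, yields the two-sided neighbourhood.
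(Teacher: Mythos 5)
Your argument follows the paper's proof very closely for parts (i) and (ii): transversality of crossings from the monotonicity of $V$, stability of crossings on compact sets via Lemma~\ref{lemma7}, escape of the extra root to $[R,\infty)$ via Lemma~\ref{lemmaR} together with the uniform decay of Lemma~\ref{lemma2}, and for (ii) the identification of $A_n$ as a discontinuity point of $I$ (the paper takes $A_n:=\max\{a:I(a)=n\}$, but this is the same number as your $\sup\{a:I(a)\le n\}$ once the jump-by-one structure is established).

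For (iii), you correctly identify the sign/parity analysis as the crux, but the phrasing \emph{``the one-sided limits $\lim_{a\to a_0^\pm}L(a)$ lie on opposite sides of $s^\star$''} cannot be right as stated: $L$ is continuous (Lemma~\ref{lemma7}(ii)), so both one-sided limits equal $s^\star$. The paper's actual mechanism is one-sided and slightly different. For $a\in(A_n,A_n+\eps_n)$ with $\eps_n$ small, one has $I(a)=n+1$ and the newly created root $r_a$ of $h_a-s^\star$ lies in $(R,\infty)$; Lemma~\ref{lemmaR}(ii) then forces $L(a)\neq s^\star$, while $L(a)\to s^\star$ as $a\downarrow A_n$. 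The side on which $L(a)$ falls is determined by the parity of $I(a)=n+1$, since $h_a$ starts below $s^\star$ and each crossing switches sides. Thus $\{L(a):a\in(A_n,A_n+\eps_n)\}\subset S_{n+1}$ is a one-sided interval abutting $s^\star$, whose side alternates with $n$. Taking $n=2k-1$ and $n=2k$ (and noting $s^\star=L(A_{2k})\in S_{2k}$) yields the two-sided neighbourhood. In other words, you do not need to compare the values of $L$ on opposite sides of $A_n$ (where the set $\{I=n\}$ could in principle be awkward); the one-sided argument on $(A_n,A_n+\eps_n)$ for consecutive $n$ suffices.
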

As an immediate consequence of this lemma, we obtain the third statement of Theorem \ref{THM1} for the neighbourhoods $U_K$ of $s^\star$ given by
$$U_K:=\bigcap_{n=0}^{K-1} (S_{2n}\cup S_{2n+1}).$$ 

\begin{proof}[Proof of Lemma \ref{intersect}]
We need to understand how the number of intersection points of the continuous family of maps $(h_a)$ with the level $s=s^\star$ can change as we vary the parameter $a$. So let $a_0\in[0,\infty)$ be any given number.
Let us first remark that no solution of \eqref{EX} can be tangential to the level $s=s^\star$ of the equator at any point $r>0$; this follows from the definition of an equator as a local maximum of $g^2$ and since the quantity $V$ introduced in \eqref{V} is decreasing.
In addition $h_{a_0}(0)\neq s^\star$ and we therefore find a neighbourhood of $a_0>0$ on which $I(\cdot)\geq I(a_0)$. In particular $I$ is subcontinuous at each point.

Let us now assume that $a_0$ is a point of discontinuity of $I(\cdot)$ and let $a_i\to a_0$ be such that $\lim_{i\to \infty} I(a_i)=\ov{\lim}_{a\to a_0} I(a)>I(a_0)$. Let $R>0$ be the number determined in Lemma \ref{lemmaR} and recall that at most one of the zeros of $h_{a_i}-s^\star$ can be larger than $R$. In addition we can check that 
$$\norm{h_{a_0}-h_{a_i}}_{C^1([0,2R])}\underset{i\to\infty}{\longrightarrow} 0,$$ 
compare with Lemma \ref{lemma7} the corresponding remarks.
If the distance between two distinct roots of $h_{a_i}$ were to converge to zero as $i\to \infty$ we would therefore find a point $0\leq r<R$ with $h_{a_0}(r)=s^\star$ and $h_{a_0}'(r)=0$. As remarked before this is impossible. 

The discontinuity of $I$ at $a_0$ must therefore be caused by roots of $h_{a_i}-s^\star$ \textit{escaping to infinity} in the sense that $h_{a_i}(r_i)=s^\star$ for a sequence $r_i\to \infty$ as $i\to \infty$. 

By Lemma \ref{lemmaR} all roots of $h_{a_i}-s^\star$ different from $r_i$ must be strictly less than the constant $R$ for $i$ large enough. Consequently $I(a_i)\leq I(a_0)+1$ for $i$ large.

Furthermore, Lemma \ref{lemma2} implies that 
$$\abs{\lim_{r\to \infty}h_{a_i}(r)-s^\star}=\abs{\lim_{r\to \infty}h_{a_i}(r)-h_{a_i}(r_i)}\leq \frac{\ov{C}}{2r_i^2}\underset{i\to \infty}{\longrightarrow} 0. $$
Applying Lemma \ref{lemma7} we find that $h_{a_0}$ converges to $s^\star$ as $r\to \infty$ as claimed in (i).

A first consequence of statement (i) and the fact that $I(a)\to \infty$ as $a\to \infty$ is that $I:[0,\infty)\to \N_0$ is surjective. Given any number $n\in \N_0$ we can thus define 
$$A_n:=\max\{a:\, I(a)=n\}\in (0,\infty).$$ The function $I$ is obviously discontinuous at $A_n$ and we conclude that $h_{A_{n}}$ tends to $s^\star$ as $r\to \infty$ by statement (i).

Finally, according to the first part of the proof, we can choose $\eps_n>0$ so small that the solutions $h_a$ intersect the level $s=s^\star$ at a point $r_a>R$ for all $a\in (A_n,A_n+\eps_n)$. Lemma \ref{lemmaR} thus implies that $\lim_{r\to \infty}h_a(r)\neq s^\star$ for all $a\in (A_n,A_n+\eps_n)$. But of course 
$$\lim_{r\to \infty}h_a(r)\underset{ a\to A_n}{\longrightarrow} s^\star=\lim_{r\to \infty}h_{A_n}$$ again by Lemma \ref{lemma7}.

The connected subset 
 $$\{\lim_{r\to \infty}h_a(r):\, a\in(A_{n-1},A_{n-1}+\eps_{n-1})\}\subset I_{n},\quad n\in\N$$ therefore contains an open interval of the form $(s^\star-\delta_n,s^\star)$ (for $n$ even) respectively $(s^\star,s^\star+\delta_n)$ (for $n$ odd). Since $I_0=[s_0,s^\star]$ the final claim of Lemma \ref{intersect} follows. 
\end{proof}

\subsection{Proof of (ii): uniqueness of solutions}

Finally we turn to the proof of the uniqueness result stated in Theorem \ref{THM1} (ii). We first show 
\begin{lemma}
\label{lemma5.1}
Let $N$ be rotationally symmetric and let $\chi$ be a $k$-eigenmap. Let $s_0$ be a local minimum of $g^2$ for which condition $(C2)$ holds true and let $(h_a)$ be the family of solutions to \eqref{EX} with $h_a(0)=s_0$ constructed in Proposition \ref{Prop6}. Assume that condition (C1) holds true for the local maximum $s^\star>s_0$ of $g^2$ to the right of $s_0$ and that $-4kG'(s^\star)\leq (d-2)^2$.
Then the map
$$L:a\mapsto \lim\limits_{r\to \infty}h_a(r)$$
is a continuous bijection from $[0,\infty)$ to $[s_0,s^\star)$.
\end{lemma}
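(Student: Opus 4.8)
The plan is to prove that $L$ is a continuous bijection from $[0,\infty)$ onto $[s_0,s^\star)$ by combining the already-established continuity of $L$ (Lemma~\ref{lemma7}(ii)), the existence argument from part (i) which gives surjectivity, and a monotonicity/comparison argument for injectivity. The key new input is the comparison principle of Proposition~\ref{CompPrinc}, which is applicable here precisely because the hypotheses of Theorem~\ref{THM1}(ii) (conditions (C1), (C2), and $-4kG'(s^\star)\leq(d-2)^2$) guarantee — via Proposition~\ref{Prop3.1} and Proposition~\ref{PropKaul}(i) — that every solution $h_a$ stays in a range $[s_0,s_1]$ on which $4k\Theta\leq(d-2)^2$, so that the comparison principle applies to all the $h_a$ simultaneously (with $\phi(r)=r/2$).

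First I would record that each $h_a$ is \emph{increasing} and converges to $s^\star$ or to $s_0$; more precisely, by the argument in the proof of part (i), the bound \eqref{V2} (valid for \eqref{EX} as well) forces $h_a>s_0$ on $(0,\infty)$, and Lemma~\ref{lemma4}(i) together with the fact that $h_a$ cannot be tangential to $s=s^\star$ shows that as long as $h_a$ stays below $s^\star$ it is strictly increasing with $L(a)=M(a)\le s^\star$; condition (C1) and Proposition~\ref{PropKaul}(i) rule out $h_a$ crossing $s^\star$ and settling at a different extremum, so in fact $L(a)\in[s_0,s^\star]$ for all $a$, and $L(a)=s^\star$ would force $h_a$ to actually reach or asymptote $s^\star$. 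Next, injectivity: given $0\le a<b$, the two solutions $h_a,h_b$ of \eqref{EX} both take the value $s_0$ "at $r=0$" but, from the asymptotics $r^{-\gamma}(h_a(r)-s_0)\to a$ in Proposition~\ref{Prop6}, one has $h_b(r)>h_a(r)$ and $h_b'(r)>h_a'(r)$ for all small $r>0$; applying Proposition~\ref{CompPrinc} with $T_\phi=T_{r/2}$ (both sides solve \eqref{EX}, so $T_\phi(h_b)=T_\phi(h_a)=0$) yields $\lim_{r\to\infty}h_b(r)>\lim_{r\to\infty}h_a(r)$, i.e.\ $L(b)>L(a)$. Hence $L$ is strictly increasing, in particular injective, and its image is an interval.

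For surjectivity onto $[s_0,s^\star)$: $L(0)=s_0$ since $h_0\equiv s_0$; and the existence proof of part (i) shows $[s_0,s^\star)\subset M([0,\infty))$, while on the set where $M(a)<s^\star$ we have $L(a)=M(a)$, so $[s_0,s^\star)\subset L([0,\infty))$. It remains to check that $L$ never attains $s^\star$ or anything above it: strict monotonicity of $L$ plus the fact that $[s_0,s^\star)$ is already in the image means $L$ is a strictly increasing continuous bijection of $[0,\infty)$ onto a half-open interval $[s_0,\ell)$ with $\ell\ge s^\star$; but if $L(a_0)=s^\star$ for some finite $a_0$ then $h_{a_0}$ either hits $s^\star$ at some finite $r$ (impossible for an increasing solution staying below $s^\star$, since it cannot be tangential to the equator level by the monotonicity of $V$) or converges to $s^\star$ from below while remaining strictly increasing — this last case is exactly the one excluded, under (C1) and $-4kG'(s^\star)\le(d-2)^2$, by the comparison argument: comparing $h_{a_0}$ with $h_a$ for $a$ slightly larger than $a_0$ gives $L(a)>s^\star$, contradicting $L(a)\le s^\star$ for all $a$. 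Hence $L([0,\infty))=[s_0,s^\star)$.

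The main obstacle is verifying carefully that the comparison principle of Proposition~\ref{CompPrinc} genuinely applies to \emph{all} pairs $h_a,h_b$ — that is, that under the hypotheses of the lemma every $h_a$ takes values in a fixed interval $[s_0,s_1]$ on which $-G'\le(d-2)^2/(4k)$, so that the quantity $\theta$ in Proposition~\ref{CompPrinc} satisfies $4k\theta\le(d-2)^2$. This requires: (a) the lower barrier $h_a>s_0$ from \eqref{V2}; (b) the upper barrier $h_a\le s^\star$, which itself relies on (C1) and Proposition~\ref{PropKaul}(i) to prevent $h_a$ from overshooting $s^\star$; and (c) pinning down that $-4kG'(s)\le(d-2)^2$ holds throughout $[s_0,s^\star]$ — this is where condition (C1), which puts the minimum of $G'$ over $[s_1,s_2]$ at $s^\star$, is used to propagate the inequality $-4kG'(s^\star)\le(d-2)^2$ to the whole interval. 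A secondary technical point is the boundary-behaviour-as-$r\to0$ input needed to start the comparison: one must convert the asymptotic expansions of Proposition~\ref{Prop6} into a genuine ordering $h_b>h_a$, $h_b'>h_a'$ at some small $r_0>0$, which is routine given $a<b$ and $\gamma>0$.
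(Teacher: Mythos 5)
Your proposal correctly identifies the main ingredients (the barrier $h_a>s_0$ from \eqref{V2}, Lemma~\ref{lemma4} for monotonicity of $h_a$ below $s^\star$, the comparison principle for injectivity of $L$, and surjectivity onto $[s_0,s^\star)$ from the existence proof). However, there is a genuine gap at the crucial step of establishing the \emph{upper} barrier $h_a<s^\star$ for all $a$, and this gap propagates through the rest of the argument.

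You justify the bound $h_a<s^\star$ by appealing to condition (C1) together with Proposition~\ref{PropKaul}(i). But Proposition~\ref{PropKaul} is a statement about the solution $\bar h$ of the \emph{harmonic map} ODE~\eqref{HM}, not about the solutions $h_a$ of the selfsimilar ODE~\eqref{EX}; it does not directly say anything about $h_a$. Indeed, without the hypotheses of the lemma, the profiles $h_a$ \emph{do} cross $s^\star$ (that is precisely what drives the non-uniqueness in Theorem~\ref{THM1}(iii)), so the upper bound cannot be free. The paper supplies the missing bridge: since $\bar h$ is increasing, the rescaled profiles $\bar h_a(r):=\bar h(ra^{1/\gamma})$ still solve~\eqref{HM} and therefore satisfy $T_{r/2}(\bar h_a)=\tfrac r2\,\bar h_a'\geq 0$, i.e.~they are supersolutions of~\eqref{EX}; one then applies Proposition~\ref{CompPrinc} (which is licit since $\bar h_a$ takes values in $[s_0,s^\star]$ where, by (C1) and $-4kG'(s^\star)\leq(d-2)^2$, the quantity $4k\theta\leq(d-2)^2$) to conclude $h_a\leq\bar h_a<s^\star$. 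This supersolution argument is the idea you are missing; invoking Proposition~\ref{PropKaul}(i) by itself does not replace it.

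The gap is not cosmetic. Your closing argument (``comparing $h_{a_0}$ with $h_a$ for $a$ slightly larger than $a_0$ gives $L(a)>s^\star$, contradicting $L(a)\leq s^\star$'') is circular: Proposition~\ref{CompPrinc} requires both solutions to take values in an interval $[s_1,s_2]$ on which $4k\theta\leq(d-2)^2$, and whether $h_a$ stays below $s^\star$ is exactly what is at issue. Similarly, the strict monotonicity of $L$ via the comparison principle applied to $h_a,h_b$ also presupposes the common upper bound $h_a,h_b<s^\star$ (your own point (b) and (c) in the ``main obstacle'' paragraph acknowledge the need for it). So the comparison-based injectivity of $L$, while the right idea and the one used in the paper, only becomes available once $h_a\leq\bar h_a<s^\star$ is in hand.
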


\begin{proof}[Proof of Lemma \ref{lemma5.1}] We observe first of all that $h_a(r)\geq s_0$ for every $a\geq 0$ and every $r\geq 0$ since inequality \eqref{V2} is valid also for solutions of \eqref{EX}. Recall now that the solution $\bar h$ of the harmonic map equation \eqref{HM} to initial data \eqref{ABhbar} is increasing on $[0,\infty)$ with $\lim_{r\to \infty} \bar h(r)=s^\star$, see Proposition \ref{PropKaul}. The rescaled functions $\bar h_a(r):=\bar h(ra^{1/\gamma})$ are thus supersolutions of \eqref{EX} and we find that $h_a\leq\bar h_a<s^\star$ by the comparison principle. We may furthermore apply the comparison principle to conclude that two different solutions $h_a$ and $h_{\tilde a}$ do not intersect at any finite $r>0$ nor converge to the same limit as $r\to \infty$. Thus $L$ is increasing and Lemma \ref{lemma5.1} follows since we have already shown that $[s_0,s^\star)\subset L([0,\infty))$. 
\end{proof}
\begin{rem}
The above proof shows in particular that the solutions $h_a$ never reach the level $s=s^\star$ of the equator and thus that they are increasing by Lemma \ref{lemma4}.
\end{rem}
Let now $N$ and $\chi$ be as in Theorem \ref{THM1} (ii). 

Given any number $s\in\R$ we let $s_1^\star\leq s<s^\star_2$ be the local maxima of $g^2$ to the left and right of $s$ and $s_0$ the local minimum of $g^2$ in $(s_1^\star,s_2^\star)$. We then need to show that the only solution of \eqref{EX} in $H^1_{rad}(\R^d)$ with limit $s$ is given by $h_{L^{-1}(s)}$ for the family $(h_a)_{a\in[-\infty,\infty]}$ of solutions to \eqref{EX} constructed in Proposition \ref{Prop6} with $h_a(0)=s_0$. Here $L$ stands for the bijection $L:\R\cup\{\pm\infty\}\to [s^\star_1,s_2^\star]$ of Lemma \ref{lemma5.1} which we extend by $L(-\infty):=s_1^\star$ and  $L(\infty):=s_2^\star$. Furthermore, we denote by $h_{-\infty}\equiv s_1^\star$ and $h_\infty\equiv  s_2^\star$ the constant solutions of \eqref{EX} which induce the corresponding equator maps.

Since by assumption \textit{all} equator maps are $\chi$-energy-minimising Proposition \ref{Prop3.1} implies that 
$-4kG'(s^\star)\leq (d-2)^2$ for every equator $C_{s^\star}$ of $N$. By Lemma \ref{lemma5.1} we thus know that the above solution $u_s$ is unique among all solutions to \eqref{HF}, \eqref{IV} induced by elements of the families $(h_a)$, $h_a(0)$ any local minimum of $g^2$, of Proposition \ref{Prop6}. \\

To conclude the proof of Theorem \ref{THM1}, we therefore only need to show that there are no selfsimilar, equivariant solutions to \eqref{HF}, \eqref{IV} other than those induced by these families $(h_a)$ of solutions to \eqref{EX}. This is achieved in the following proposition which is valid for arbitrary compact manifolds and eigenmaps $\chi$.
\begin{prop}\label{total}
Let $N$ be any compact, rotationally symmetric manifold, $\chi$ a $k$-eigenmap and assume that condition (C2) is valid. 
Then every solution $h\in H^1_{rad}(\R^d)$ to \eqref{EX} is a member of one of the families $(h_a)_{-\infty\leq a\leq \infty}$ given in Proposition \ref{Prop6} corresponding to the local minima of $g^2$.
\end{prop}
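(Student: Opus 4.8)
The plan is to describe an arbitrary solution $h\in H^1_{rad}(\R^d)$ of \eqref{EX} near the origin: I want to show that $h$ extends continuously to $[0,\infty)$ with $h(0)$ a local extremum of $g^2$ (a pole being a particular case of a local minimum), and then to identify $h$ with one of the $h_a$ by the uniqueness statement already contained in Proposition~\ref{Prop6}.

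\emph{Step 1 (decay of $h'$, logarithmic variable).} Writing \eqref{EX} in the divergence form \eqref{div-form} and integrating, the derivative $kr^{d-3}e^{r^2/4}G(h)$ of $r^{d-1}e^{r^2/4}h'(r)$ is absolutely integrable near $0$ (as $N$ compact forces $g,g'$ bounded), so $\ell:=\lim_{r\to0}r^{d-1}e^{r^2/4}h'(r)$ exists in $\R$. If $\ell\neq0$ then $|h'(r)|\sim|\ell|\,r^{1-d}$, which makes $\int_0^1|h'|^2r^{d-1}\,dr$ diverge for every $d\geq3$, contradicting $h\in H^1_{rad}(\R^d)$; hence $\ell=0$, and the same integral representation then yields $|rh'(r)|\leq C$ on $(0,1]$. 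Introducing $t=\log r$ turns \eqref{EX} into the autonomous damped equation $\ddot h+(d-2)\dot h-kG(h)=\delta(t)$, with $\delta(t)=-\tfrac12 e^{2t}\dot h(t)\to0$ as $t\to-\infty$ and $\dot h=rh'$ bounded.

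\emph{Step 2 ($rh'\to0$ and convergence of $h$).} In these variables the quantity $V$ of \eqref{V} reads $V=\dot h^2-kg^2(h)$; by \eqref{V'1} it is non-increasing, it is bounded below by $-k\norm{g}_\infty^2$ and, by Step~1, bounded above near $r=0$ by $C^2$, so $V$ has a finite limit as $r\to0$. Integrating the identity $V'(r)=-\dot h^2\bigl(\tfrac{2(d-2)}{r}+r\bigr)$ then gives $\int_{-\infty}^{0}\dot h(t)^2\,dt<\infty$, and since $\ddot h$ is bounded this forces $\dot h(t)\to0$, i.e. $rh'(r)\to0$. Consequently $g^2(h(t))=\dot h^2-V\to\mu$ for some $\mu\geq0$, so $h(t)$ is trapped, for $t$ near $-\infty$, in arbitrarily small neighbourhoods of the level set $\{g^2=\mu\}$ (in particular $h$ is bounded, since on any half-period of $g$ where $g^2$ is non-constant it would otherwise oscillate away from $\mu$). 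Since $h$ is continuous in $t$ and any path in $\R$ joining two distinct components of $\{g^2=\mu\}$ must cross a point where $g^2<\mu$, the value $h(t)$ stays near a single such component; and near a point $s$ with $g^2(s)=\mu$, $G(s)\neq0$ the equation would give $\ddot h\to kG(s)\neq0$, incompatible with $\dot h\to0$. Hence $s_\star:=\lim_{r\to0}h(r)$ exists with $G(s_\star)=0$, and $h\in C^0([0,\infty))$. \textbf{This convergence analysis near $r=0$ --- ruling out oscillatory behaviour of $h(t)$ as $t\to-\infty$ --- is the main technical point}; the rest is comparatively soft.

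\emph{Step 3 (identification).} If $s_\star=s^\star$ is a local maximum of $g^2$: were $h$ non-constant, $V$ would be strictly decreasing with $V(0^+)=-kg^2(s^\star)$, so $V(r)<-kg^2(s^\star)$ for $r>0$; but for small $r$, $h(r)$ lies where $g^2\leq g^2(s^\star)$, whence $V(r)\geq-kg^2(h(r))\geq-kg^2(s^\star)$, a contradiction. Thus $h\equiv s^\star$, the constant member $h_{\pm\infty}$ of the family attached to an adjacent local minimum. If instead $s_\star=s_0$ is a local minimum of $g^2$ or a pole, then $G'(s_0)>0$ (at a non-pole minimum $2G'(s_0)=(g^2)''(s_0)>0$, since $g'$ has only simple roots so $g''(s_0)\neq0$; at a pole $G'(s_0)=g'(s_0)^2=1$), so the indicial exponents of the linearisation $w''+\tfrac{d-1}{r}w'-\tfrac{k}{r^2}G'(s_0)w=0$ of \eqref{EX} at $s_0$ are $\gamma=\tfrac12\bigl(\sqrt{(d-2)^2+4kG'(s_0)}-(d-2)\bigr)>0$ and $\gamma_-\leq-(d-2)$. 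As $h-s_0$ is bounded near $0$ with $r(h-s_0)'\to0$, the singular mode $r^{\gamma_-}$ is excluded, and a variation-of-parameters/fixed-point argument as in the proof of Proposition~\ref{Prop6} (see \cite{Dis}) shows that $\lim_{r\to0}r^{-\gamma}(h(r)-s_0)=:a$ exists in $\R$, so that $h$ satisfies \eqref{AB}. Under condition (C2) one has $\gamma\geq1$ and the solution of \eqref{EX}, \eqref{AB} is unique by Proposition~\ref{Prop6}; hence $h=h_a$, a member of the family attached to $s_0$. This exhausts all cases and proves the proposition.
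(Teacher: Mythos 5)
Your proof is correct but takes a genuinely different route from the paper's. The paper isolates three preliminary facts -- Lemma~\ref{lemma5.2} (solutions in $H^1_{rad}$ are bounded), Lemma~\ref{lemma5.3} (eventual monotonicity near $r=0$), and a separate Case~3 argument ruling out $h(0)$ being a non-critical point of $g^2$ -- and glues them together with the comparison principles of Section~\ref{section:comp}. Your Steps~1--2 replace all of this by a single dynamical-systems argument in the logarithmic variable $t=\log r$: the Lyapunov quantity $V=\dot h^2-kg^2(h)$, Barbalat's lemma giving $\dot h=rh'\to0$, and a connectedness argument on the level sets of $g^2$ to deduce $h(r)\to s_\star$ with $G(s_\star)=0$. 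This is tighter, dispenses with comparison principles entirely in the convergence part, and disposes of the paper's Case~3 for free. The two proofs diverge most in your Step~3, Case~(b): the paper assumes $r^{-\gamma}(h-s_0)$ fails to converge and reaches a contradiction by squeezing $h$ against members of the family $(h_a)$ via Lemma~\ref{lemma4} and Proposition~\ref{CompPrinc}, whereas you invoke regular-singular-point asymptotics (variation of parameters / stable-manifold dichotomy) directly. Both are valid; the paper's comparison route has the advantage of being self-contained within the article, while yours appeals to the same asymptotic ODE machinery from~\cite{Dis} that underlies Proposition~\ref{Prop6}, and you should make that appeal explicit (a Coddington--Levinson Theorem~8.1-type argument in $t=\log r$, exactly as the paper uses for Lemma~\ref{asymptEF}). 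Two minor points: in Step~3(b) you write that ``$h-s_0$ is bounded near $0$'', but what you actually have and need to exclude the $r^{\gamma_-}$ mode is the stronger fact $h-s_0\to0$ established in Step~2; and the parenthetical boundedness argument in Step~2 (``it would otherwise oscillate away from $\mu$'') deserves a sentence more -- namely that for $t$ sufficiently negative $h(t)$ lies in a single bounded connected component of $\{\,|g^2-\mu|<\eps\,\}$, which is where both boundedness and convergence of $h$ come from.
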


This result might be surprising since the condition imposed by $h\in H_{rad}^1(\R^d)$ is relatively mild. A priori, it does not exclude functions with singularities at $r=0$, but merely restricts the allowed blow-up rates. 
\\

As we will see below, most solutions of equation \eqref{EX} are unbounded and can thus be described by
\begin{lemma}
\label{lemma5.2} Let $N$ be compact, $k\in\N$ and let $h$ be an unbounded solution of equation \eqref{EX}. Then there exist $\delta>0$ and $\eps>0$ such that 
\begin{equation*}
\abs{h'(r)}>\frac{\delta}{r^{d-1}}\text{ for } r\in(0,\eps).
\end{equation*}
In particular $h\notin H_{rad}^1(\R^d)$.
\end{lemma}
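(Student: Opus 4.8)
The plan is to rephrase everything through the quantity $P(r):=r^{d-1}e^{r^2/4}h'(r)$, for which the divergence form \eqref{div-form} becomes $P'(r)=kr^{d-3}e^{r^2/4}G(h(r))$. Since $N$ is compact, $G=gg'$ is bounded, say $|G|\le C_0$, and since $d\ge 3$ the weight $r^{d-3}$ is integrable at the origin; hence $P'\in L^1$ near $0$, so $P$ extends continuously to $r=0$ with $L:=P(0^+)\in\R$ and $|P(r)-L|\le \tfrac{C_0 k\,e^{1/4}}{d-2}\,r^{d-2}$ for $0<r\le 1$. The key observation is that the conclusion of the lemma is \emph{equivalent} to $L\neq 0$: indeed $|h'(r)|=|P(r)|\,r^{1-d}e^{-r^2/4}$ and $e^{\pm r^2/4}$ is bounded above and below on $(0,1)$, so the bound $|h'(r)|>\delta r^{1-d}$ near $0$ is the same as $\inf_{(0,\eps)}|P|>0$, which holds (say with $\delta=\tfrac12|L|e^{-1/4}$) if and only if $L\neq0$, because $\liminf_{r\to0}|P(r)|=|L|$. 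Once $L\neq0$ is known one also gets $h\notin H^1_{rad}(\R^d)$ for free, since $\int_0^\eps |h'|^2 r^{d-1}\,dr\ge \delta^2\int_0^\eps r^{1-d}\,dr=+\infty$. Thus the whole statement reduces to proving: \emph{if $h$ is unbounded then $L\neq 0$}, i.e. $L=0$ forces $h$ to be bounded.

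First note that, being $C^2$ on $(0,\infty)$ and, by Lemma~\ref{lemma2}(i), convergent as $r\to\infty$, $h$ is bounded on $[\eps_0,\infty)$ for every $\eps_0>0$; so "$h$ unbounded'' means $\limsup_{r\to0^+}|h(r)|=\infty$. Assume $L=0$. Then $|h'(r)|\lesssim r^{d-2}\cdot r^{1-d}=r^{-1}$ near $0$, so $r|h'(r)|$ is bounded, hence so is $V(r)=r^2|h'(r)|^2-kg^2(h(r))$. Since $V$ is non-increasing with $V'(r)=-\bigl(2(d-2)r+r^3\bigr)|h'(r)|^2$ (Lemma~\ref{lemma2}), the limit $V_0:=V(0^+)$ is finite and $\int_0^\eps r\,|h'(r)|^2\,dr\le \tfrac{1}{2(d-2)}(V_0-V(\eps))<\infty$. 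The remaining — and main — step is to deduce boundedness of $h$ from this, using the periodicity and non-constancy of $g$. The point is that $r|h'(r)|=\sqrt{V(r)+kg^2(h(r))}$, so on any arc $[a,b]\subset(0,\eps)$ on which $h$ is monotone the change of variables $r\mapsto h$ gives
$$
\int_a^b r\,|h'(r)|^2\,dr=\int_{h(a)}^{h(b)}\sqrt{V(r(h))+kg^2(h)}\;dh .
$$
Let $T$ be the period of $g$, fix a small $\eta>0$, and pick $\eps_1\le\eps$ with $V(r)>V_0-\eta$ on $(0,\eps_1)$. If $h$ were unbounded near $0$, then inside $(0,\eps_1)$ it would cover infinitely many pairwise disjoint monotone arcs on which $h$ increments by at least $T$; on each such arc the contribution above is at least $B:=\int_0^T\sqrt{\max\!\bigl(0,\,V_0-\eta+kg^2(\xi)\bigr)}\,d\xi$. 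Moreover $B>0$ for $\eta$ small, because $V_0+k\max g^2>0$: otherwise, since $g^2$ is periodic and $h$ ranges over arbitrarily long intervals near $0$, one would have $r^2|h'(r)|^2=V(r)+kg^2(h(r))\to V_0+k\min g^2<0$ along a sequence $r\to0$, absurd. Summing the contributions of infinitely many disjoint such arcs contradicts $\int_0^\eps r|h'|^2\,dr<\infty$. Hence $h$ is bounded, contrary to assumption; so $L\neq 0$, and the lemma follows as explained.

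The main obstacle is precisely this last implication: turning the integral bound $\int_0^\eps r|h'|^2\,dr<\infty$ together with equation \eqref{EX} into genuine boundedness of $h$. Two technical points need care. First, when $h$ oscillates one must decompose it into monotone arcs and check that the arcs on which $h$ moves by a full period $T$ still account for infinitely much of $(0,\eps_1)$ — this follows from $\limsup_{r\to0}|h|=\infty$, since each excursion of $h$ toward $\pm\infty$ contains arbitrarily many disjoint length-$T$ monotone sub-arcs, at ever smaller scales, which remain disjoint across excursions. Second, one must secure $B>0$, which rests on the elementary phase-plane fact that $V_0>-k\max g^2$, i.e. that $h$ cannot concentrate all its variation near the $s$-values where $V+kg^2(h)$ degenerates. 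Both of these are exactly where the hypotheses that $N$ is compact (so $g$ is bounded and periodic) and that $g^2$ is non-constant enter.
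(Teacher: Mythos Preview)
Your approach is substantially different from the paper's, and the reduction to ``$L:=\lim_{r\to 0}r^{d-1}e^{r^2/4}h'(r)\neq 0$'' via the divergence form is correct and elegant. However, the step where you pass from $\int_0^\eps r|h'|^2\,dr<\infty$ to boundedness of $h$ has a genuine gap: the claim that an unbounded $h$ must contain \emph{infinitely many disjoint monotone arcs on which $h$ increments by at least $T$} is false for general $C^1$ functions (take a zigzag that rises by $3T/4$, falls by $T/4$, repeatedly), and nothing in your argument uses the ODE to rule this out. The fix is simple and in the spirit of what you wrote: set $\phi(\xi):=\sqrt{\max(0,V_0-\eta+kg^2(\xi))}$ and let $\Phi$ be an antiderivative; then $\Phi(\xi+T)=\Phi(\xi)+B$ with $B>0$, so $\Phi$ grows linearly. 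Since $r|h'|^2\ge \phi(h)|h'|=|(\Phi\circ h)'|$, the integral $\int_0^{\eps_1}r|h'|^2\,dr$ dominates the total variation of $\Phi\circ h$ on $(0,\eps_1]$; if $h$ is unbounded so is $\Phi\circ h$, hence this variation is infinite. No decomposition into monotone arcs is needed. (Your justification of $V_0+k\max g^2>0$ is also slightly garbled; the cleanest version is: otherwise $r^2|h'|^2=V(r)+kg^2(h)\le V_0+k\max g^2\le 0$ for all $r$, forcing $h'\equiv 0$.)

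For comparison, the paper's proof is a one-liner based on a different monotone quantity. It sets $\widetilde V(r):=r^{2(d-1)}\bigl[|h'|^2-\tfrac{k}{r^2}g^2(h)\bigr]$ and checks directly from \eqref{EX} that $\widetilde V'=-r^{2d-1}|h'|^2-2k(d-2)r^{2d-5}g^2(h)\le 0$. Since $h$ is unbounded and $N$ compact, $h$ hits the level of a pole at some $r_0>0$, where $g(h(r_0))=0$ and hence $\widetilde V(r_0)\ge 0$; strict monotonicity then gives $\widetilde V(r)\ge \delta^2>0$ on $(0,\eps)$ for any $\eps<r_0$, i.e.\ $r^{2(d-1)}|h'|^2\ge \delta^2+kr^{2(d-2)}g^2(h)\ge\delta^2$. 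This bypasses all the analysis of $P$, $V$, and total variation. Your route, once patched, has the mild advantage of not requiring the existence of a pole (it only uses that $g$ is bounded, periodic and non-constant), but the paper's argument is dramatically shorter.
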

\begin{proof}
Let $h$ be any unbounded solution of \eqref{EX}. Since $N$ is compact, $h$ must reach the level of a pole for some $r_0>0$, i.e.~$g(h(r_0))=0$. 
Let now 
$$\widetilde V(r)=\widetilde V(h)(r):=r^{2(d-1)}\cdot \big[\abs{h'}^2-\frac{k}{r^2}g^2(h)\big].$$
Obviously $\widetilde V(r_0)\geq 0$ and a short calculation shows that $\widetilde V$ is decreasing for any non-constant solution of \eqref{EX}. Given any $0<\eps<r_0$, we can thus choose $\delta>0$ such that 
$\widetilde V\vert_{[0,\eps]}\geq \delta^2>0$ and the claim follows.
\end{proof}

The behaviour of general solutions to \eqref{EX} is furthermore restricted by
\begin{lemma}
\label{lemma5.3} Let $N$ and $\chi$ be as in Proposition \ref{total}. Then for any solution $h$ of \eqref{EX} there exists $\eps=\eps(h)>0$ such that $h\vert_{(0,\eps)}$ is monotonous.
\end{lemma}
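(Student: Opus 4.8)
The statement asserts that every solution $h$ of \eqref{EX} is eventually monotone as $r\to 0^+$; equivalently, $h'$ has a definite sign on some punctured neighbourhood $(0,\eps)$. The plan is to distinguish two cases according to whether $h$ is bounded near $0$ or not, and in each case to extract enough control on $h'$ to rule out oscillation.

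First I would deal with the unbounded case. By Lemma \ref{lemma5.2}, if $h$ is unbounded then $|h'(r)|>\delta r^{-(d-1)}$ on some interval $(0,\eps)$, so $h'$ certainly does not vanish there and hence has a constant sign (being continuous), giving monotonicity. So the content of the lemma is really about bounded solutions. For a bounded solution $h$, write $h(r)\in[s_1^\star,s_2^\star]$ eventually, and recall the quantity $V(r)=r^2|h'(r)|^2-kg^2(h(r))$ from \eqref{V}, which is non-increasing along non-constant solutions with $V'(r)=-r^2|h'(r)|^2\big[\tfrac{2(d-2)}{r}+r\big]$. Since $V$ is monotone and bounded (a bounded solution has $r|h'|$ bounded, as in the proof of Lemma \ref{lemma2}, so $V$ is bounded below by $-k\|g\|_\infty^2$), the limit $V(0^+):=\lim_{r\to 0}V(r)$ exists in $[-k\|g\|_\infty^2,+\infty)$. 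From $V(r)=r^2|h'|^2-kg^2(h)$ and the boundedness of $g^2(h)$, I can then read off the limiting behaviour of $r|h'(r)|$ as $r\to 0$.

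The key step — and the main obstacle — is to show that a bounded solution cannot oscillate infinitely often near $r=0$. Suppose $h'$ changes sign infinitely often as $r\to 0$; then there is a sequence $r_n\downarrow 0$ of local extrema of $h$, so $h'(r_n)=0$, whence $V(r_n)=-kg^2(h(r_n))\leq 0$, and by monotonicity of $V$ we get $V(r)\le 0$ for all $r$ (since $r_n\to 0$ and $V$ is non-increasing, $V(0^+)=\sup V=\lim_{r\to0}V(r)$ is the largest value; but $V(r_n)\le0$ forces... ) — more carefully: $V$ non-increasing means $V(r)\le V(r_n)$ for $r\ge r_n$, so letting $n\to\infty$ gives $V(r)\le \liminf V(r_n)\le 0$. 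Hence $r^2|h'(r)|^2\le kg^2(h(r))$ for all small $r$. Now between two consecutive zeros $r_{n+1}<r_n$ of $h'$, $h$ attains an interior extremum value; evaluating equation \eqref{EX} (or its divergence form \eqref{div-form}) at such an extremum $r_*$, where $h'(r_*)=0$, forces $G(h(r_*))=0$, i.e.\ $h(r_*)$ is a critical point of $g^2$ — either a pole or a local extremum $s^\star$ or $s_0$. But the extrema $h(r_n)$ of a continuous oscillating solution must accumulate somewhere, and the alternating maxima/minima would have to sit exactly on the discrete critical set $\{G=0\}$; combined with $V\le 0$ and $V$ strictly decreasing (non-constant solution), I expect a contradiction with the solution passing through a fixed critical level. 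The cleanest route is probably: at an interior extremum $r_*$ of $h$, from \eqref{EX}, $h''(r_*)=\tfrac{k}{r_*^2}G(h(r_*))$; if $h(r_*)$ is a local maximum then $h''(r_*)\le 0$ so $G(h(r_*))\le 0$, if a local minimum then $G(h(r_*))\ge 0$, and between consecutive extrema $h$ is monotone, so the extremal values are monotone in $n$ and converge to some $s_\infty$ with $G(s_\infty)=0$; then a comparison/Taylor argument near $s_\infty$ (as in Lemma \ref{lemma4} and Proposition \ref{PropKaul}, using that $-4kG'(s_\infty)$ cannot be $>(d-2)^2$ relative to the rescaled operator near $0$, or directly analysing the linearisation $f''+\tfrac{d-1}{r}f'+\tfrac r2 f' - \tfrac{k}{r^2}G'(s_\infty)f=0$ whose indicial exponents are real because near $r=0$ the dominant balance is the Euler equation $f''+\tfrac{d-1}{r}f'-\tfrac{k}{r^2}G'(s_\infty)f=0$ with real roots $\gamma,-(d-2)-\gamma$ exactly when $(d-2)^2+4kG'(s_\infty)\ge0$) shows $h-s_\infty$ is eventually monotone, contradicting infinite oscillation. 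The delicate point is justifying that the indicial equation at $r=0$ has real roots — equivalently $(d-2)^2 + 4kG'(s_\infty)\ge 0$ — which holds automatically at a pole ($G'=|g'|^2>0$) and at a minimal sphere (by $(C2)$, $G'(s_0)\ge (d-1)/k>0$), and at an equator it is exactly the hypothesis that would be needed; here, though, one only needs it at the accumulation level $s_\infty$ of the extrema, and a separate argument (the oscillation near an equator is precisely the $-4kG'>(d-2)^2$ case, which is incompatible with the Euler-type balance forcing oscillation as $r\to 0$ rather than as $r\to\infty$) closes the remaining case. I would organise the write-up as: (1) unbounded case via Lemma \ref{lemma5.2}; (2) bounded case, existence of $V(0^+)$ and the dichotomy $r|h'|\to c$; (3) if $c=0$ the solution matches the framework of Proposition \ref{Prop6} and is monotone near $0$ by the remark there; (4) if $c>0$, rule out oscillation by the extremum-value monotonicity plus indicial-root argument above.
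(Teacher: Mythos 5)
Your proposal takes a genuinely different route from the paper's proof, and unfortunately the part that carries the real weight (the bounded case) is left with genuine gaps.

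The paper's argument for Lemma~\ref{lemma5.3} (given in the text only under the assumptions of Theorem~\ref{THM1}~(ii), with the general (C2) case deferred to \cite{Dis}) is a clean comparison-principle argument: if $h$ has a local extremum at $r_0>0$, say a local minimum, then by Lemma~\ref{lemma4} the value $h(r_0)$ lies strictly between a local minimum $s_0$ and a local maximum $s^\star$ of $g^2$; one then picks a member $h_a$ of the family from Proposition~\ref{Prop6} with $h_a(r_0)>h(r_0)$ and $h_a'(r_0)>0=h'(r_0)$, and Proposition~\ref{CompPrinc} forces $h<h_a<s^\star$ for all $r\geq r_0$. Lemma~\ref{lemma4} then rules out any later local maximum, so $h$ has at most one local extremum on all of $(0,\infty)$, which is strictly stronger than the stated lemma. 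This is a forward-in-$r$ barrier argument, not an analysis of oscillation near $r=0$. Your route is instead to analyse the bounded case via the monotonicity of $V(r)=r^2|h'|^2-kg^2(h)$ near $r=0$. This is the idea the paper hints at for the \emph{general} case in the remark following the proof; there the key quantitative fact is that $V(r_1)-V(r_2)>\Delta(h)>0$ between any two local extrema $r_1<r_2<1$, which immediately caps the number of extrema near $0$. You do not establish this quantitative decrease.

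Concretely, the bounded-case portion of your write-up has the following problems. First, the assertion that evaluating \eqref{EX} at an extremum $r_*$ ``forces $G(h(r_*))=0$'' is false; at an extremum $h''(r_*)=\frac{k}{r_*^2}G(h(r_*))$ only determines the sign of $G(h(r_*))$, which you later acknowledge, but the first claim should be struck. Second, and more seriously, the claim that the extremal values are monotone in $n$ and converge to a single level $s_\infty$ with $G(s_\infty)=0$ is not justified: what the $V$-monotonicity gives you is that $g^2(h(r_n))$ is monotone, which is compatible with the maxima and minima accumulating at two distinct values $s_-\neq s_+$ with $g^2(s_-)=g^2(s_+)$; nothing in what you write pins the oscillation amplitude to zero. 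Third, the concluding ``indicial root / separate argument'' paragraph is explicitly acknowledged to contain an unfilled case (an equator with $-4kG'>(d-2)^2$), and the reasoning offered for why it is incompatible with oscillation as $r\to 0$ is not an argument but an expectation. Finally, step (3) of your outline, which appeals to ``the remark'' after Proposition~\ref{Prop6} for monotonicity near $0$ when $c=0$, leans on the comparison-principle machinery that is only available under the Theorem~\ref{THM1}~(ii) hypotheses, so it does not supply an independent proof. Your observation about the unbounded case via Lemma~\ref{lemma5.2} is correct but is not where the content lies, since the lemma is only invoked in the paper for bounded solutions. To fix the bounded case along your line you would need to prove the definite drop of $V$ between consecutive extrema; without it the oscillation is not ruled out.
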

\begin{proof} For simplicity we give the details of the proof only for settings satisfying the assumptions of Theorem \ref{THM1} (ii). In this case we can show the stronger result that solutions of \eqref{EX} achieve at most one local extremum on all of $(0,\infty)$.

So let $N$ and $\chi$ be as in Theorem \ref{THM1} (ii) and let $h$ be a solution of \eqref{EX} that attains a local extremum, say a local minimum, at some point $(r_0,h(r_0))$, $r_0>0$. Then Lemma \ref{lemma4} tells us $G(h(r_0))>0$. We denote by $s_0<h(r_0)<s^\star$ the local minimum respectively the local maximum of $g^2$ to the left respectively right of $h(r_0)$. Let now $(h_a)$ be the family of solutions to \eqref{EX} with $h(0)=s_0$. The functions $h_a$ are increasing for every $a>0$ and $h_a(r_0)$ tends to $s^\star$ as $a\to \infty$. Choosing $a>0$ large enough, we thus have $s_0<h(r_0)<h_a(r_0)$ and $h'(r_0)=0<h_a'(r_0)$. By the comparison principle we conclude that 
$$h(r)<h_a(r)<s^\star \text{ for all } r\geq r_0.$$
Therefore $h$ we cannot achieve any local maximum and thus any local extremum at all after $r_0$ according to Lemma \ref{lemma4}

The claim follows because $r_0$ was chosen as an arbitrary extremal point of $h$.
\end{proof}

\begin{rem}
The proof of Lemma \ref{lemma5.3} for general settings makes use of the fact that the decreasing quantity $V$ of \eqref{V} is negative for bounded solutions of \eqref{EX} and satisfies
$$V(r_1)-V(r_2)>\Delta$$ for all local extrema $0<r_1<r_2<1$ of $h$ and a constant $\Delta(h)>0$; for details we refer to \cite{Dis}. 
\end{rem}

Finally, we conclude the proof of our main uniqueness result for selfsimilar solutions, Theorem \ref{THM1} (ii), by giving the 
\begin{proof}[Proof of Proposition \ref{total}]
Let $h\in H^1_{rad}(\R^d)$ be any solution of \eqref{EX}. By Lemma \ref{lemma5.2} the function $h$ is bounded. It can therefore be extended continuously up to $r=0$ according to Lemma \ref{lemma5.3}. We analyse the properties of $h$ based on the value $h(0)=\lim_{r\to 0} h(r)$. We begin with 
\\

\textbf{Case 1.} $h(0)$ is a local minimum of $g^2$.

Let $s_0$ be any local minimum of $g^2$ and let $\gamma>0$ and $(h_a)_{a\in \R}$ be as in Proposition \ref{Prop6}. We know that any solution $h$ of \eqref{EX} with $h(0)=s_0$ and $\lim_{r\to 0}r^{-\gamma}(h(r)-s_0)=a\in \R$ coincides with  $h_a$ by the uniqueness statement of Proposition \ref{Prop6}. 

So let us assume that there exists a solution $h$ of \eqref{EX} with $h(0)=s_0$ for which $r^{-\gamma}(h(r)-s_0)$ diverges as $r\to 0$.
According to Lemma \ref{lemma5.3} and by symmetry, we may assume that $h$ is increasing on a small interval $(0,\eps)$. 
We chose $b>s_0$ such that $G'\vert_{[s_0,b]}>0$ and fix $r_0\in (0,\eps)$ with $h(r_0)<b$. Following the arguments of the proof of statement (i) of Theorem \ref{THM1} we then find $a_0>0$ with $h(r_0)<h_{a_0}(r_0)$ and with $h_{a_0}\vert_{[0,r_0]}\leq b$. According to Lemma \ref{lemma4} the function $h_{a_0}$ is an upper bound for $h$ on $[0,r_0]$ and thus $\overline{\lim}_{r\to 0}r^{-\gamma}(h(r)-s_0)\leq a_0<\infty$. Since this quantity by assumption diverges, there exists a number $a>0$ with 
$$ 0\leq \underset{r\to 0}{\underline\lim}r^{-\gamma}(h(r)-s_0)< a<\underset{r\to 0}{\ov\lim}r^{-\gamma}(h(r)-s_0).$$

But then $h$ has to intersect the corresponding solution $h_a$ of \eqref{EX} in points arbitrarily close to $r=0$ in contradiction to 
Lemma \ref{lemma4}.

We conclude that the only solutions of \eqref{EX} with $h(0)=s_0$ are those of the family $(h_a)_{a\in\R}$.
\\

\textbf{Case 2.}\, $h(0)$ is a local maximum of $g^2$.

Let $C_{s^\star}$ be an equator of $N$. We claim that the only solution of \eqref{EX} with $h(0)=s^{\star}$ is the constant map $h_{\infty}\equiv s^\star$.

Indeed, let us assume that $h$ is a non-constant solution of \eqref{EX} with $h(0)=s^\star$ and let $r_1>0$ be such that $g^ 2(h(r_1))<g^2(s^\star)$. We set $\delta:=g^2(s^\star)-g^2(h(r_1))>0$ and choose $r_0\in(0,r_1)$ such that $g^2(h(r))\geq g^2(s^\star)-\delta/2$ for all $r\in [0,r_0]$. Since the quantity $V(r)$ given by \eqref{V} is non-increasing we obtain that on $(0,r_0)$
$$(rh')^2-kg^2(s^\star)+k\delta/2\geq V(r)\geq V(r_1)\geq-kg^2(s^\star)+k\delta.$$
Consequently $$\abs{h'(r)}\geq \frac{\sqrt{k\delta/2}}{r}$$ on $(0,r_0)$ and $h$ cannot converge as $r\to 0$, in contradiction to the assumption $h(0)=s^\star$.
\\

Finally, we need to consider
\\

\textbf{Case 3.} $h(0)$ is no local extremum of $g^2$.

We have assumed from the very beginning that $g'$ has no roots of multiplicity greater than one and thus find that $G(h(0))\neq 0$. By symmetry we can focus on solutions $h$ of \eqref{EX} with $G(h(0))>0$. 

Suppose $h$ is decreasing on some interval $(0,\eps)$. We can then bound the second derivative of $h$ on a small interval $(0,r_0]\subset(0,\eps)$ by
$$h''=k\cdot\frac{G(h(0))+o(1)}{r^2}-\big(\frac{d-1}{r}+\frac{r}{2}\big)h'\geq \frac{c}{r^2}$$
for a constant $c>0$ independent of $r$ and for $o(1)\to 0$ as $r\to 0$.

Integrating the obtained inequality from $r$ to $r_0$ gives
$$h'(r)\leq-\frac{c}{r}+h'(r_0)+\frac{c}{r_0}=-\frac{c}{r}+C(r_0)$$
for every $r\in(0,r_0)$, which is obviously wrong for bounded functions $h$.

According to Lemma \ref{lemma5.3}, we obtain that $h$ is increasing on some interval $(0,\eps)$. Using the divergence form of \eqref{EX} given in \eqref{div-form} we then find for $r\in(0,r_0)$
$$(e^{r^2/4}r^{d-1}h')'\geq cr^{d-3}$$
for a constant $c>0$ and for $r_0>0$ small enough.

Integrating from $r/2$ to $r<r_0$ we find
$$e^{r^2/4}r^{d-1}h'(r)\geq \Big(\frac{r}{2}\Big)^{d-1}e^{r^2/16}h'\Big(\frac{r}{2}\Big)+c\frac{1-2^{2-d}}{d-2}r^{d-2}\geq \tilde c r^{d-2}>0,$$
since $h$ is increasing on $(0,\eps)$.
The resulting lower bound of $h'(r)\geq \frac{\tilde c}{r}$ on $(0,r_0)$ once more  stands in contrast to the assumption that $h$ is continuous up to $r=0$.
\\

We conclude that $h(0)$ is a local extremum of $g^2$ for each bounded solution $h$ of \eqref{EX}. Combined with cases 1 and 2 and the description of unbounded solutions 
of Lemma \ref{lemma5.2}, we obtain Proposition \ref{total}.
\end{proof}
This concludes the proof of Theorem \ref{THM1}.

\section{Stability from time $t=0$: proof of Theorem~\ref{THM3}}
We study the stability properties of the constant in time solutions $u(x,t)=R_\chi(s^\star)$ of the harmonic map flow. That is to say, we consider the Cauchy problem 
\begin{equation}
\label{perturb}
\left\{
\begin{array}{l}
f_t-f_{rr}-\frac{d-1}{r}f_r+\frac{k}{r^2}\left[G(f+s^\star)-G(s^\star)\right]=0 \\ f(t=0)=f_0,
\end{array} 
\right.
\end{equation}
where $s^\star$ is such that $G(s^\star)=0$.

\subsection{Proof of (i): linear stability}

The linearised version of the above equation is obviously
\begin{equation*}
f_t-[\Delta f-\frac{k G'(s^\star)}{r^2} f] = 0.
\end{equation*}
Here and in the following $\Delta$ denotes the radial Laplacian on $\R^d$, $\Delta f:=f_{rr}+\frac{d-1}{r}f_r$.
By Hardy's inequality~\eqref{Hardy}, the operator $\displaystyle -\Delta + \frac{c}{r^2}$ is positive on $L^2$, or $H^1$, if $\displaystyle c > -\frac{(d-2)^2}{4}$. This suffices to prove (i).

\subsection{Proof of (ii): weak solutions} $ $

\noindent \underline{The a priori estimate:}
Let us begin with a formal derivation of the a priori estimate on $f$, solving~(\ref{perturb}), which is at the heart of the proof of $(ii)$. 
Since by assumption $k \inf G' > -\frac{(d-2)^2}{4}$, Taylor's formula gives
$$
\frac{k}{r^2}\left[G(f+s^\star)-G(s^\star)\right] \geq \left( -\frac{(d-2)^2}{4} + \epsilon \right) f 
$$
for some $\epsilon>0$. Thus, taking the scalar product of~\eqref{perturb} with $f$ in space, and integrating in time gives
$$
\displaystyle \|f\|_{L^\infty ([0,\infty),L^2(\mathbb{R}^d))}^2 + \|\nabla f\|_{L^2 ([0,\infty), L^2(\mathbb{R}^d))}^2 \leq C \|f_0\|_2^2,
$$
for some constant $C$ by the same argument as in (i). 

\bigskip

\noindent \underline{The rigorous proof:} In order to turn the above a priori estimate into a rigorous proof, we make use of an approximation scheme. Let $\chi$ be a smooth function, zero in a neighbourhood of the origin, and equal to one outside a (larger) bounded neighbourhood of the origin.
Then let $f^\epsilon$ solve
\begin{equation*}
\left\{
\begin{array}{l}
f^\epsilon_t-f^\epsilon_{rr}-\frac{d-1}{r}f^\epsilon_r+\frac{k}{r^2}\chi \left(\frac{r}{\epsilon} \right) \left[G(f^\epsilon+s^\star)-G(s^\star)\right]=0 \\ f^\epsilon(t=0)=f_0
\end{array} 
\right.
\end{equation*}
It is clear that for $\epsilon>0$, the above equation has a unique solution $f^\epsilon$ in $L^\infty L^2 \cap L^2 \dot{H}^1$, which is, by the above estimate, uniformly bounded in this space. Furthermore,
$$
\left\| f^\epsilon_t \right\|_{\dot{H}^{-1}(\mathbb{R}^d)} \leq \left\| \Delta f^\epsilon \right\|_{\dot{H}^{-1}(\mathbb{R}^d)} + \left\| \frac{G(f^\epsilon+s^\star)-G(s^\star)}{r^2} \right\|_{\dot{H}^{-1}(\mathbb{R}^d)}.
$$
Arguing by duality and using Hardy's inequality gives
\begin{equation*}
\begin{split}
\left\| \frac{G(f^\epsilon+s^\star)-G(s^\star)}{r^2} \right\|_{\dot{H}^{-1}} & = \sup_{\|\phi\|_{\dot{H}^1}\leq 1} \left| \int \frac{G(f^\epsilon+s^\star)-G(s^\star)}{r^2} \phi \right| \\
& \lesssim \sup_{\|\phi\|_{\dot{H}^1}\leq 1} \int \frac{|f^\epsilon|}{r} \frac{|\phi|}{r} \\
& \lesssim \sup_{\|\phi\|_{\dot{H}^1}\leq 1} \|f^\epsilon\|_{\dot{H}^1} \|\phi\|_{\dot{H}^1} = \|f^\epsilon\|_{\dot{H}^1}.
\end{split}
\end{equation*}
Putting together the two above inequalities gives
$$
\left\| f^\epsilon_t \right\|_{L^2 ([0,\infty),\dot{H}^{-1}(\mathbb{R}^d))} \lesssim \left\| f^\epsilon \right\|_{L^2 ([0,\infty),\dot{H}^{1}(\mathbb{R}^d)}
$$
which implies a uniform bound for $f^\epsilon_t$ in $L^2 \dot{H}^{-1}$. 
By Aubin's lemma (see for instance~\cite{Showalter}), the set of functions which is bounded in $L^2 H^1$, with time derivatives bounded in $L^2 H^{-1}$, embeds compactly in $L^2 L^{\frac{2d}{d-2}-\delta}_{loc}$. Thus a subsequence of $f^\epsilon$ converges to a function $f$ in $L^2 L^{\frac{2d}{d-2}-\delta}_{loc}$, where $\delta$ is positive and small.

We can now pass to the limit in the equation. The linear terms are of course easily handled. As for the nonlinear term, the strong convergence of $f_\epsilon$ implies that
$$
G(f^\epsilon+s^\star)-G(s^\star) \rightarrow G(f+s^\star)-G(s^\star) \;\;\;\;\mbox{in $L^2 L^{\frac{2d}{d-2}-\delta}_{loc}$}.
$$
On the other hand, $\frac{1}{r^2}\chi \left(\frac{r}{\epsilon} \right)$ converges strongly in $L^\infty L^{\frac{d}{2}-\delta}_{loc}$ to $\frac{1}{r^2}$ as $\epsilon$ goes to zero. Thus, 
$$
\frac{1}{r^2}\chi \left(\frac{r}{\epsilon} \right)\left[G(f^\epsilon+s^\star)-G(s^\star)\right] \rightarrow \frac{1}{r^2}\left[G(f^\epsilon+s^\star)-G(s^\star)\right]\;\;\;\;\mbox{in $L^2 L^{\frac{2d}{d+2}-\delta}_{loc}$}
$$
(for a new choice of $\delta$), which concludes the proof.

\subsection{Proof of (iii): strong solutions if $G'(s^\star)>0$} 
\label{grenouille}
Let $s^\star$ be such that $G(s^\star)=0$ and $G'(s^\star)>0$.
We first need to introduce some new notations: set
$$
c:= k G'(s^\star)>0\;\;\;\;H_c := -\Delta + \frac{c}{r^2} \;\;\;\;\mbox{and} \;\;\;\; J(x) := \frac{k G(x+s^\star) - c x}{x^2}
$$
(so that $J$ is a smooth and bounded function). This turns~\eqref{perturb} into
\begin{equation}
\label{eqH}
\left\{ \begin{array}{l}
\displaystyle f_t + H_c f = \frac{f^2}{r^2} J(f) \\ f(t=0) = f_0. \end{array} \right.
\end{equation}

The necessary estimates will be provided by the following lemma:

\begin{lemma}
\label{estim}
If $r^2 F \in L^\infty_t L^\infty_x$ and $f_0 \in L^\infty$, there exists a unique solution in $L^\infty_t L^\infty_x$ to
\begin{equation}
\label{eqH1}
\left\{ \begin{array}{l}
\displaystyle f_t + H_c f = F \\ f(t=0) = f_0. \end{array} \right.
\end{equation}
Furthermore, it satisfies
$$ \|f\|_{L^\infty_t L^\infty_x} \lesssim \|f_0\|_{L^\infty_x} + \|r^2 F\|_{L^\infty_t L^\infty_x}.$$
\end{lemma}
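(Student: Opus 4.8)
The plan is to establish Lemma~\ref{estim} via the Duhamel/semigroup formula associated with the Schr\"odinger-type operator $H_c = -\Delta + \frac{c}{r^2}$. Since $c = kG'(s^\star) > 0$, Hardy's inequality~\eqref{Hardy} shows $H_c$ is a nonnegative self-adjoint operator on $L^2(\R^d)$, so it generates an analytic semigroup $e^{-tH_c}$. The solution of~\eqref{eqH1} is then represented as
$$
f(t) = e^{-tH_c} f_0 + \int_0^t e^{-(t-\tau)H_c} F(\tau)\, d\tau,
$$
and the entire content of the lemma is the $L^\infty_x \to L^\infty_x$ bound $\|e^{-tH_c} g\|_{L^\infty_x} \lesssim \|g\|_{L^\infty_x}$ uniformly in $t$, together with the corresponding bound on the Duhamel term. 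Uniqueness is immediate once boundedness of the propagator is known, by taking the difference of two solutions and applying the estimate to $F \equiv 0$.

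The core step, then, is a pointwise (Gaussian-type) upper bound on the heat kernel $p_t^c(x,y)$ of $e^{-tH_c}$: because the potential $\frac{c}{r^2}$ is nonnegative, the Trotter--Kato / Feynman--Kac comparison gives $0 \le p_t^c(x,y) \le p_t^0(x,y)$, where $p_t^0$ is the ordinary heat kernel on $\R^d$. Hence $\int p_t^c(x,y)\,dy \le \int p_t^0(x,y)\,dy = 1$, which yields $\|e^{-tH_c}\|_{L^\infty \to L^\infty} \le 1$ for all $t>0$. For the inhomogeneous term, the source is of the form $F = \frac{1}{r^2}\tilde F$ with $\tilde F = r^2 F \in L^\infty_t L^\infty_x$, so one needs
$$
\left\| \int_0^t e^{-(t-\tau)H_c} \frac{\tilde F(\tau)}{r^2}\, d\tau \right\|_{L^\infty_x} \lesssim \|r^2 F\|_{L^\infty_t L^\infty_x}.
$$
This follows from the kernel bound combined with the elementary estimate $\int_0^\infty \int_{\R^d} p_s^0(x,y)\,\frac{dy}{|y|^2}\, ds \lesssim 1$ (for $d \ge 3$), which is exactly the statement that the Newtonian-type potential generated by $\frac{1}{|y|^2}$ is bounded; concretely, $\int_{\R^d} p_s^0(x,y)|y|^{-2} dy$ scales like $\min(|x|^{-2}, s^{-1})$ and its time integral converges. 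Care is needed near $x = 0$, but the singularity $|x|^{-2}$ of the spatial integrand is integrable in $s$ near $s=0$ after using $\int_{\R^d} p_s^0(0,y)|y|^{-2} dy \sim s^{-1}$ only for $s$ bounded and noting the $e^{-|x|^2/s}$ decay controls large $s$; alternatively one invokes the sharpened bound $\|H_c^{-1}(|x|^{-2} g)\|_\infty \lesssim \|g\|_\infty$ directly from the explicit form of the Green's function of $H_c$, which on radial functions is an Euler-type ODE with explicit power-law solutions.

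I expect the main obstacle to be making the inhomogeneous estimate fully rigorous rather than formal: one must be careful that $\frac{\tilde F(\tau)}{r^2}$ may fail to be in $L^2$ near the origin for $d = 3$ (where $|x|^{-2}$ is borderline), so the Duhamel integral must be interpreted with the kernel bound directly rather than through $L^2$ semigroup theory alone. The cleanest route is to prove the time-integrated kernel estimate $\sup_x \int_0^\infty \int_{\R^d} p^0_s(x,y)\,|y|^{-2}\,dy\,ds < \infty$ and $\sup_x \int_0^\infty \int_{\R^d} p^0_s(x,y)\,|y|^{-2}\,dy\,ds$ localized appropriately, then bootstrap from $p^c \le p^0$; existence of $f$ follows by a contraction-mapping or limiting argument on the linear problem (which is in fact already linear, so a direct construction via the semigroup suffices, and the $L^\infty$ bound is inherited from the kernel estimates). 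Once Lemma~\ref{estim} is in hand, statement~(iii) of Theorem~\ref{THM3} follows by a standard fixed-point argument in a small ball of $L^\infty_t L^\infty_x$, using that $J$ is bounded so that $\|r^2 \cdot \frac{f^2}{r^2} J(f)\|_\infty = \|f^2 J(f)\|_\infty \lesssim \|f\|_\infty^2$ is quadratic and hence contractive for small data.
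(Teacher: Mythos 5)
Your route through the comparison $p_t^c \le p_t^0$ together with the claimed bound $\int_0^\infty \int_{\R^d} p_s^0(x,y)|y|^{-2}\,dy\,ds \lesssim 1$ does not close: that time integral diverges. By scaling, $\int_{\R^d} p_s^0(x,y)|y|^{-2}\,dy = s^{-1}u(x/\sqrt{s})$ for a bounded $u$ with $u(w)\sim |w|^{-2}$ at infinity, which gives the pointwise estimate $\sim \min(|x|^{-2},s^{-1})$ that you invoke; but $\int_0^t \min(|x|^{-2},s^{-1})\,ds$ then grows like $\log t$ once $t\gg |x|^2$. This is no accident: a radial solution of $-\Delta u = |x|^{-2}$ in $\R^d$ is $-\frac{1}{d-2}\log|x|$, so $(-\Delta)^{-1}$ applied to $|y|^{-2}$ is unbounded and the free heat semigroup cannot absorb a source of the form $\tilde F(\tau,y)/|y|^2$ with a uniform-in-time $L^\infty$ bound. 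Dominating $p^c$ by $p^0$ discards the potential $c/r^2$, which is precisely the mechanism that makes the lemma true.

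The reason $H_c$ behaves better than $-\Delta$ is the identity you mention only in passing at the end, $H_c\bigl(\frac{1}{c}\bigr)=\frac{1}{r^2}$: the constant is annihilated by $-\Delta$ while the potential produces exactly $\frac{1}{r^2}$, and this is lost the moment the potential is thrown away. The paper exploits it directly via the maximum principle rather than through any time-integrated kernel estimate: after reducing to $F\ge 0$ (using positivity of the kernel), the constant $\frac{1}{c}\|r^2F\|_{L^\infty_t L^\infty_x}$ is a stationary supersolution of the inhomogeneous problem, so the parabolic comparison principle gives the a priori $L^\infty$ bound with no improper time integral to justify; the homogeneous part is handled, as you also do, by $0\le p^c\le p^0$. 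Your closing remark about the explicit Green's function of $H_c$ is the right instinct, but it is not an alternative to the $p^0$ argument --- it has to replace it, since that is where the finiteness lives. The uniqueness and existence-by-approximation steps you outline match the paper.
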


With the help of this lemma, it is easy to solve~(\ref{eqH}) by Picard's fixed point theorem: rewrite~(\ref{eqH}) via Duhamel's formula as
$$
f(t) = e^{-tH_c} f_0 + \int_0^t e^{(s-t)H_c} \frac{f^2}{r^2} J(f)(s)\,ds := \operatorname{RHS}(f).
$$
Lemma~\ref{estim} easily gives the estimates
$$
\left\| \operatorname{RHS}(f) \right\|_{L^\infty_t L^\infty_x} \lesssim \left\| f_0 \right\|_{L^\infty} + \|f\|_{L^\infty_t L^\infty_x}^2
$$
and
$$
\left\| \operatorname{RHS}(f) - \operatorname{RHS}(\widetilde{f}) \right\|_{L^\infty_t L^\infty_x} \lesssim \max (\|f\|_{L^\infty_t L^\infty_x},\|\widetilde{f}\|_{L^\infty_t L^\infty_x}) \|f - \widetilde{f} \|_{L^\infty_t L^\infty_x}.
$$
Thus the map $RHS$ is a contraction on a small enough ball in $L^\infty_t L^\infty_x$ which implies the existence of a unique fixed point and thus of a solution of \eqref{eqH} in this small ball.

\begin{proof}[Proof of Lemma~\ref{estim}] 1. The uniqueness part follows by the maximum principle (see for instance Quittner and Souplet~\cite{QS}, Prop.~52.4, page 509).

\medskip

2. Assuming a priori the existence of a solution $f$ in $L^\infty L^\infty$ to~(\ref{eqH1}), let us prove the bounds. If $F = 0$, they follow since the kernel of $e^{-tH_c}$ is positive, and pointwise smaller than the kernel of $e^{t\Delta}$, as is easily checked. Suppose now that $f_0=0$; by positivity of the kernel of $e^{-tH_c}$, it suffices to consider the case $F\geq 0$. Observe that
$$
H_c \frac{1}{c} = \frac{1}{r^2}.
$$
Thus $\frac{1}{c}$ is a constant solution of $\widetilde{f}_t + H_c \widetilde{f} = \frac{1}{r^2}$, and $\frac{\|r^2 F\|_{L^\infty_x L^\infty_t}}{c}$ a supersolution for our problem. By the maximum principle, 
$$
\|f\|_{L^\infty_t L^\infty_x} \lesssim \frac{\|r^2 F\|_{L^\infty_x L^\infty_t}}{c}.
$$

\medskip

3. It is now standard to obtain the existence result by combining these a priori bounds with an approximation scheme.
\end{proof}

\subsection{Optimality of Theorem~\ref{THM3}} \label{instab} We discuss for the statements (i) and (ii) to what extent they are optimal; in other words, for both of these statements we examine whether the given sufficient condition is also necessary.

\bigskip \noindent 
\underline{Statement (i)} The assumption is clearly optimal, since for $c<-\frac{(d-2)^2}{4}$, any self-adjoint extension of $-\Delta + \frac{c}{r^2}$ has an unbounded spectrum.

\bigskip \noindent \underline{Statement (ii)} This statement becomes wrong if $G'(s^\star) < -\frac{(d-2)^2}{4}$.
This corresponds to the situation where $C_{s^\star}$ is an equator which is not locally energy-minimising. 

As we saw in the proof of Theorem~\ref{THM1}, there exist profiles $\psi$, such that $h(r,t) = \psi\left(\frac{r}{\sqrt{t}}\right)$ is a solution to the equivariant harmonic 
map heat flow
$$h_t-h_{rr}-\frac{d-1}{r}h_r+\frac{k}{r^2}G(h)=0.$$
with $h(t=0,r) = \lim_{r\rightarrow \infty} \psi = s^\star$, and $\psi \not \equiv s^\star$. 
Consider the data $h(r,\epsilon)$ for such a solution $h$ and let $u$ be the corresponding solution of the harmonic map flow. By taking $\epsilon$ small, this data can be made arbitrarily close to $s^\star$ in $L^2(\mathbb{R}^d)$: 
denoting $f=u-s^\star$ the difference, this means $\|f(t=0)\|_{L^2}$ arbitrarily small.
However, the $L^2(\mathbb{R}^d)$ distance between $h(r,t+ \epsilon)$ and $s^\star$, $\|f(t)\|_2$, goes to infinity as $t$ goes to infinity: This contradicts $(ii)$ since $f(r,t+\epsilon)$ is the only solution to
$$
f_t-f_{rr}-\frac{d-1}{r}f_r+\frac{k}{r^2}\left[G(f)-G(s^\star)\right] = 0
$$
associated to the initial data $f(r,\epsilon)$.
Indeed, this solution is smooth and decays fast (as can be verified since $\psi$ converges to the equator), and thus one can easily prove ``weak -strong uniqueness'': any other solution satisfying the energy inequality~(\ref{tomahawk}) has to agree with this one.

\section{Stability from time $t=1$: proofs of theorems~\ref{chien} and~\ref{thm:lin-stab}}

In this section, the problem will be analysed in self-similar variables
$$\sigma:=\log(t) \quad \rho=\frac r{\sqrt{t}}.$$
Setting
$$w(\rho,\sigma) = v(e^{\sigma/2}\rho,e^\sigma) = v(r,t).$$
equation~(\ref{equiv}) becomes
\begin{equation}
\label{flower}
\partial_\sigma w - \partial_\rho^2 w - \left( \frac{d-1}{\rho} + \frac{\rho}{2} \right) \partial_\rho w + \frac{k}{\rho^2} (gg')(w) = 0.
\end{equation}
As we shall see, the operator in $\rho$ can be made self-adjoint in $L^2(d\mu)$ with
$$
d\mu(\rho)=e^{\abs{\rho}^2/4} \rho^{d-1} d\rho.
$$

\subsection{Proof of Theorem~\ref{chien}}

Equation~(\ref{flower}) can also be written
$$
\partial_\sigma w - \rho^{1-d} e^{-\frac{\rho^2}{4}} \partial_\rho \left[ \rho^{d-1} e^{\frac{\rho^2}{4}} \partial_\rho w \right] + \frac{k}{\rho^2} G(w) = 0
$$
Taking the scalar product with $\partial_\sigma w$ in $L^2 (d\mu)$ yields
$$
\partial_\sigma \overline{ E} (w) = - \left\| \partial_\sigma w \right\|_{L^2(d\mu)}^2,
$$
which gives the desired result.

\subsection{Proof of Theorem~\ref{thm:lin-stab}}

In self-similar coordinates the linearised equation of the harmonic map flow \eqref{LIN} reads
\begin{equation} \label{SS}
\partial_\sigma w- \partial_\rho^2 w-(\frac{d-1}{\rho} + \frac{\rho}{2}) \partial_\rho w+\frac{k}{\rho^2}G'(\psi(\rho))w=0.
\end{equation}

Recalling $d\mu(\rho)=e^{\abs{\rho}^2/4} \rho^{d-1} d\rho$, consider the weighted space
$$H:=L_{rad}^2(\mu):=\{v:\R\to \R: \int \abs{v}^2\,d\mu<\infty\}$$ and the corresponding Sobolev spaces. This is of course equivalent to considering the radial elements of $L^2(\mathbb{R}^d,e^{|y|^2/4}dy)$.
The operator
$$Aw:=-\partial_\rho^2 w -(\frac{d-1}{\rho}+\frac{\rho}{2})\partial_\rho w+\frac{k}{\rho^2}G'(\psi(\rho))w$$
defined on the dense subspace $H_{rad}^2(\mu)$ of the Hilbert space $(H,\norm{\cdot}_H)$ is symmetric.
The main step for the proof of Theorem \ref{thm:lin-stab} is to show
\begin{prop}
\label{Prop:EV}
The operator $A$ has a selfadjoint extension onto a dense subspace of $H$ whose spectrum is discrete. Furthermore, the number of eigenvalues less than one is equal to the number of local extrema of the function $\psi$ representing the original selfsimilar solution $u(x,t)=R_\chi \psi(\frac{x}{\sqrt{t}})$.
\end{prop}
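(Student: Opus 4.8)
\emph{Overall plan and self-adjointness.} The plan is to realise $A$ as a self-adjoint Sturm--Liouville operator on $L^2(d\mu)$, to prove that its spectrum is discrete by a compact-embedding argument, and then to read off the counting statement from classical oscillation theory, the crucial input being that $\rho\psi'(\rho)$ solves $A\theta=\theta$ --- which is exactly why the threshold in the statement is $1$. Since $e^{\rho^2/4}\rho^{d-1}\big(w''+(\tfrac{d-1}{\rho}+\tfrac{\rho}{2})w'\big)=\partial_\rho(\rho^{d-1}e^{\rho^2/4}w')$, the operator $A$ is formally self-adjoint for $d\mu$, with associated quadratic form $Q(w)=\int_0^\infty\big(|w'|^2+\tfrac{k}{\rho^2}G'(\psi)|w|^2\big)\,d\mu$. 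By Proposition~\ref{Prop6}, $\psi(\rho)\to s_0$, a local minimum of $g^2$, as $\rho\to0$, and $G'(s_0)>0$ (equivalently, the exponent $\gamma$ of Proposition~\ref{Prop6} is positive); hence $G'(\psi)>0$ on some $(0,\rho_1)$, while $\tfrac{k}{\rho^2}G'(\psi)$ is bounded on $[\rho_1,\infty)$. Thus $Q$ is bounded below on $C^\infty_c((0,\infty))$, and its Friedrichs extension is the desired self-adjoint realisation of $A$. Under (C2) one has $\gamma\ge1$, the second indicial solution $\rho^{\gamma'}$ of $A\theta=\lambda\theta$ near $0$ does not lie in $L^2(d\mu)$, so $0$ is a limit-point endpoint and this extension is the only one; at $\rho=\infty$ the growth of $e^{\rho^2/4}$ likewise forces the limit-point case.

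\emph{Discreteness.} It suffices to show that the form domain of $A$, with norm $\|w\|_{L^2(d\mu)}^2+Q(w)$, embeds compactly into $L^2(d\mu)$. The lower bound on $Q$ controls, for $w$ in the form domain, both $\int|w'|^2\,d\mu$ and $\int_0^{\rho_1}\rho^{-2}|w|^2\,d\mu$. Writing $e^{\rho^2/4}\rho^{d-1}=2\rho^{d-2}\partial_\rho e^{\rho^2/4}$ and integrating by parts gives the weighted Poincar\'e bound $\int_R^\infty|w|^2\,d\mu\lesssim R^{-2}\int|w'|^2\,d\mu$, so the tail at $\infty$ is uniformly small on bounded sets; near the origin $\int_0^\varepsilon|w|^2\,d\mu\lesssim\varepsilon^2\int_0^{\rho_1}\rho^{-2}|w|^2\,d\mu$. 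On compact subintervals of $(0,\infty)$ the weight is comparable to Lebesgue measure, so the Rellich theorem gives local precompactness; together with the two tail bounds this yields the compact embedding, hence a compact resolvent, hence a spectrum that is discrete and accumulates only at $+\infty$.

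\emph{The counting.} Differentiating \eqref{EX} and setting $\theta:=\rho\psi'$ one computes
$$-\theta''-\Big(\tfrac{d-1}{\rho}+\tfrac{\rho}{2}\Big)\theta'+\tfrac{k}{\rho^2}G'(\psi)\,\theta=\theta,\qquad\text{that is,}\qquad A\theta=\theta .$$
Near $\rho=0$ Proposition~\ref{Prop6} gives $\theta(\rho)\sim\gamma a\,\rho^{\gamma}$, which is the recessive one of the two indicial behaviours $\rho^{\gamma},\rho^{\gamma'}$; equivalently $\theta$ lies in the form domain near $0$, hence satisfies the boundary condition of the Friedrichs extension there (automatically so when $0$ is limit-point, e.g. under (C2)). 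Classical Sturm oscillation theory for bounded-below self-adjoint Sturm--Liouville operators with discrete spectrum then asserts that the number of eigenvalues of $A$ strictly less than $1$ equals the number of zeros of $\theta=\rho\psi'$ in the open interval $(0,\infty)$. But the zeros of $\rho\psi'$ in $(0,\infty)$ are exactly the zeros of $\psi'$, i.e. the local extrema of $\psi$, and each is simple: at a critical point $r_0$, \eqref{EX} gives $\psi''(r_0)=\tfrac{k}{r_0^2}G(\psi(r_0))$, and $G(\psi(r_0))=0$ together with $\psi'(r_0)=0$ would force $\psi$ to be constant. Hence $A$ has exactly as many eigenvalues below $1$ as $\psi$ has local extrema.

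\emph{Main difficulty.} The identity $A(\rho\psi')=\rho\psi'$ is the heart of the matter; given it, what remains is the standard-but-technical work of choosing the self-adjoint realisation and proving discreteness via the weighted Hardy/Poincar\'e estimates, and of invoking oscillation theory at the two singular endpoints. The delicate point is to verify that $\rho\psi'$ is the solution of $(A-1)\theta=0$ singled out by the boundary condition of our (Friedrichs) extension at $\rho=0$ --- which it is, because $\rho\psi'\sim\rho^{\gamma}$ is the recessive solution there --- and that at $\rho=\infty$ we are in the limit-point situation in which the zero-counting form of the theorem applies; when (C2) fails at $s_0$ this identification of the Friedrichs boundary condition with the asymptotics $\sim\rho^{\gamma}$ is the one extra thing to check. (If $\psi$ is constant, $\rho\psi'\equiv0$; that degenerate case is precisely the equator and minimal-sphere maps, whose stability is treated in Theorem~\ref{THM3}.)
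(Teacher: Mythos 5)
Your proof is correct and follows the paper's strategy closely: the Friedrichs extension; discrete spectrum via compactness (you embed the form domain compactly into $L^2(d\mu)$ using the weighted Poincar\'e/Hardy bounds, whereas the paper writes $A=A_0+A_1$ with $A_1$ a bounded, relatively compact multiplication operator and shows $A_0^{-1}$ is compact via Lemma~\ref{lemma:komp} --- same ingredients, slightly different packaging); the identification $A(\rho\psi')=\rho\psi'$ (you obtain it by differentiating \eqref{EX}, the paper by differentiating the time-translated family $\psi(\tfrac{\cdot}{\sqrt{t+\eps}})$ --- the same computation in disguise); and zero-counting via Sturm oscillation. The one place you lean on a citation where the paper argues from scratch is the oscillation count: the paper's Lemma~\ref{lemma:Sturm-Liou} proves, via a min--max estimate on the spans of $\mathds{1}_{(\rho_{i-1},\rho_i)}v_{E_0}$ and a Wronskian argument controlled by the asymptotics of Lemma~\ref{asymptEF}, that the number of eigenvalues below $E_0$ equals the number of zeros of the principal solution $v_{E_0}$ --- and you correctly flag the delicate point, namely checking that $\rho\psi'$ is indeed that principal solution for the Friedrichs boundary condition at $\rho=0$.
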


Theorem~\ref{thm:lin-stab} immediately follows from this proposition by transforming back to the original coordinates.

\begin{proof}[Proof of Proposition \ref{Prop:EV}]
Let us first remark that the operator $-\Delta -\frac{\rho}{2}\partial_{\rho} $ is non-negative since 
$$\langle (-\Delta -\frac{\rho}{2}\partial_{\rho} )w,w\rangle_{L^2(\mu)} =\int \abs{w'}^2 d\mu$$
for every $w\in H_{rad}^2(\mu)\subset L^2_{rad}(\mu)$. 

Since $N$ is compact and smooth the function $G'$ is bounded from below. Recall furthermore that $\psi(0)$ is the coordinate 
of either a pole or a minimal sphere by Proposition \ref{total}. We thus find an interval $[0,R_0]$ on which $G'(\psi(\cdot))>0$. 

The multiplication operator
$w\mapsto \frac{k}{\rho^2}G'(\psi(\rho))w$ is thus bounded from below in $H$ by some constant $\gamma$. Consequently, the same holds true for the operator $A$, i.e.~we have that $$\langle A w,w\rangle_H \geq\gamma\norm{w}_H^2$$ for every $w\in H^2_{rad}(\mu)$.

By the Friedrich's extension theorem the operator $A$ thus has a unique selfadjoint extension (still denoted by $A$) onto a domain $\mathcal{D}(A)\subset L^2_{rad}(\mu)$ contained in the form domain of $A$, i.e.~in $H^1_{rad}(\mu)$.\\

We now analyse the spectrum of this selfadjoint operator and begin by showing that it is discrete. 

Let $R_0$ be as above and let $\phi\in C_c^\infty([0,\infty),[0,1])$ be such that $\text{supp}(\phi)\subset [0,R_0]$ and 
$\phi\equiv 1$ on $[0,R_0/2]$.

We decompose the operator $A$ as
$$A=A_0+A_1$$ for the bounded multiplication operator $A_1: L^2(\mu)\to L^2(\mu)$ given by
$$A_1w:=(1-\phi)\cdot \frac{k}{\rho^2}G'(\psi(\rho))w.$$ 
We show
\begin{lemma}
\label{lemma:komp}
The operator $A_0:=A-A_1:\mathcal{D}(A)\to L^2(\mu)$
is a bijective unbounded operator with compact inverse.
\end{lemma}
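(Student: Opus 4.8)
The claim is that $A_0 := A - A_1$, where $A_1$ is the bounded multiplication by $(1-\phi)\cdot\frac{k}{\rho^2}G'(\psi(\rho))$, is a bijection $\mathcal{D}(A)\to L^2(\mu)$ with compact inverse. The strategy has two parts. First I would show that $A_0$ is bounded below by a \emph{positive} constant, hence invertible with bounded inverse: indeed $A_0 = A - A_1 = -\Delta - \frac{\rho}{2}\partial_\rho + \phi\cdot\frac{k}{\rho^2}G'(\psi(\rho))$, and on $[0,R_0]$ we have $G'(\psi)>0$ so the potential term is nonnegative, while $-\Delta - \frac{\rho}{2}\partial_\rho$ is already nonnegative by the identity $\langle(-\Delta-\frac{\rho}{2}\partial_\rho)w,w\rangle_{L^2(\mu)}=\int|w'|^2\,d\mu$ quoted just above. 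To get \emph{strict} positivity (so $0$ is not in the spectrum), I would argue that $\int|w'|^2\,d\mu \geq \lambda\|w\|_H^2$ for some $\lambda>0$ on the form domain $H^1_{rad}(\mu)$ — this is a spectral-gap/Poincar\'e-type inequality for the Ornstein–Uhlenbeck-type weight $e^{\rho^2/4}\rho^{d-1}$, which holds because the measure $d\mu$ is, up to the polynomial factor, a Gaussian and the radial Laplacian with this drift has a spectral gap (alternatively one invokes that the bottom of the spectrum of $-\Delta-\frac{\rho}{2}\partial_\rho$ on radial functions is $d/2>0$, since the first nontrivial radial Hermite-type eigenfunction has eigenvalue $d/2$ — only constants give $0$, and constants are not in $L^2(d\mu)$ because $d\mu$ is not finite... actually $\int e^{\rho^2/4}\rho^{d-1}d\rho=\infty$, so constants are automatically excluded and the operator is injective on its domain). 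So $A_0$ is invertible with $A_0^{-1}:L^2(\mu)\to L^2(\mu)$ bounded, and in fact $A_0^{-1}$ maps into $\mathcal{D}(A_0)=\mathcal{D}(A)$.

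The second and more delicate part is \emph{compactness} of $A_0^{-1}$. The plan is to show that the embedding $\mathcal{D}(A_0)\hookrightarrow L^2(\mu)$ is compact, where $\mathcal{D}(A_0)$ carries the graph norm. Equivalently, I would show that a sequence $(w_n)$ bounded in the graph norm of $A_0$ — hence bounded in $H^1_{rad}(\mu)$ by the form estimate — has a subsequence converging in $L^2(\mu)$. On any fixed compact interval $[\epsilon, R]\subset(0,\infty)$, boundedness in $H^1$ plus standard elliptic regularity (the $H^2(\mu)$ bound from the graph norm) gives compactness in $L^2([\epsilon,R])$ by Rellich. So the issue is tightness at the two ends $\rho\to 0$ and $\rho\to\infty$. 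Near infinity, the Gaussian-type weight $e^{\rho^2/4}$ makes the tail small: the Hardy inequality \eqref{Hardy2} (in its weighted form on $[0,\infty)$) gives $\int_R^\infty w^2\, r^2 r^{d-3}e^{r^2/4}\,dr \lesssim \int |w'|^2 r^{d-1}e^{r^2/4}\,dr$, so $\int_R^\infty w^2\,d\mu \lesssim R^{-2}\|w\|^2_{H^1(\mu)}\to 0$ uniformly. Near $\rho=0$, the potential $\phi\cdot\frac{k}{\rho^2}G'(\psi)$ with $G'(\psi(0))>0$ is a genuinely positive $\frac{c}{\rho^2}$-type potential, and combined with Hardy's inequality \eqref{Hardy} it controls $\int_0^\epsilon \frac{w^2}{\rho^2}\rho^{d-1}d\rho$, giving $\int_0^\epsilon w^2\,d\mu \lesssim \epsilon^2\|w\|^2_{\mathcal D(A_0)}\to 0$ uniformly. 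A standard diagonal argument then extracts an $L^2(\mu)$-convergent subsequence, proving $\mathcal D(A_0)\hookrightarrow L^2(\mu)$ compact, hence $A_0^{-1}$ compact.

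\textbf{The main obstacle.} The delicate point is the behaviour near $\rho=0$: without the positive potential $\frac{c}{\rho^2}$ (with $c=kG'(\psi(0))>0$), neither tightness at the origin nor even a spectral gap would be available — this is precisely why $\phi$ is chosen supported in $[0,R_0]$ with $G'(\psi)>0$ there, so that $A_0$ retains the good short-range potential while $A_1$ absorbs the part of the potential that may be negative (near the equator level). I must take care that the Hardy inequality \eqref{Hardy} is applied with the right boundary behaviour: functions in $H^1_{rad}(\mu)$ need not vanish at $r=1$, so one works on small intervals $(0,\epsilon)$ and uses a cutoff, paying a controllable error. The rest — positivity of $A_0$, the Friedrichs framework, Rellich on compact subintervals — is routine given the ingredients already assembled in the paper, in particular Proposition \ref{total} which guarantees $\psi(0)$ is a pole or minimal sphere so that $G'(\psi(0))>0$.
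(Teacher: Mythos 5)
Your plan is essentially the paper's: prove coercivity of the $A_0$-form on $H^1_{rad}(\mu)$, deduce bijectivity, and reduce compactness of $A_0^{-1}$ to compactness of the embedding $H^1_{rad}(\mu)\hookrightarrow L^2_{rad}(\mu)$. The main packaging difference is that the paper invokes the Lax--Milgram/Riesz representation theorem once to produce the inverse directly as a composition $A_0^{-1}=\iota\circ L$, where $\iota:H^1_{rad}(\mu)\hookrightarrow L^2_{rad}(\mu)$ is the inclusion and $L$ a bounded isomorphism; compactness is then a statement purely about $\iota$, proved from the single inequality $\int\rho^2w^2\,d\mu\leq 16\int|w'|^2\,d\mu$ (Rellich at infinity). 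This avoids the step you gloss over: a positive lower bound on a merely symmetric operator gives injectivity and a bounded left inverse, not surjectivity. Here that is not fatal because $A$ is the Friedrichs extension and $A_1$ is a bounded symmetric perturbation, so $A_0$ is self-adjoint and the lower bound puts $0$ in the resolvent set; but this needs to be said, and the Lax--Milgram route makes it automatic.

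Two further remarks. The weight $e^{\rho^2/4}$ grows at infinity --- it is not Gaussian up to polynomial factors, and your ``Ornstein--Uhlenbeck'' heuristics implicitly use the wrong sign (you notice this mid-sentence when observing that $d\mu$ is infinite). The growth is precisely the mechanism: it excludes constants from $L^2(\mu)$, and makes $\iota$ compact at $\rho\to\infty$. Your informal spectral-gap argument (``constants are not in $L^2$'') does not by itself rule out essential spectrum accumulating at $0$; the rigorous coercivity statement is exactly the weighted Hardy inequality \eqref{Hardy2}, which you do cite and which suffices with no asymptotic analysis. Finally, your tightness-at-the-origin step (via the positive potential $c/\rho^2$ and a cutoff) is superfluous: on $[0,R]$ the weight is comparable to $\rho^{d-1}$, so $H^1_{rad}(\mu)$-boundedness there is ordinary $H^1(B_R)$-boundedness for radial functions on a Euclidean ball, and classical Rellich compactly embeds $H^1(B_R)\hookrightarrow L^2(B_R)$ with no separate tightness argument near $\rho=0$. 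Only tightness at $\rho\to\infty$ is non-trivial, and that is where the weighted Hardy inequality and the growing weight actually earn their keep.
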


\begin{proof}
We consider the bilinear form 
$$B(w_1,w_2):=\langle A_0w_1,w_2\rangle_{L^2(\mu)}$$
induced by $A_0$. By the choice of $\phi$, the definition of $A$ and Hardy's inequality \eqref{Hardy2} we can extend $B(\cdot,\cdot)$ 
to a bounded and coercive bilinear form on all of $H_{rad}^1(\mu)$. The representation theorem of Riesz then implies 
the existence of an isomorphism $L$ from the dual space $(H^1_{rad}(\mu))^*$ to $H_{rad}^1(\mu)$ such that 
$$B(Lf,w)=\langle f,w\rangle$$
for every linear form $f\in (H_{rad}^1(\mu))^*$.

Remark that by definition $A_0$ and $L^{-1}$ agree on $\mathcal{D}(A)$ and that the domain $\mathcal{D}(A)$ is nothing else than the image of 
$L_{rad}^2(\mu)\subset (H_{rad}^1(\mu))^*$ under $L$ by the maximality of selfadjoint operators. Thus $A:\mathcal{D}(A)\to L^2(_{rad}\mu)$ is a bijection 
with inverse given by 
$$A^{-1}=\iota\circ L\vert_{L^2_{rad}(\mu)}.$$
Here $\iota:H^1_{rad}(\mu)\to L_{rad}^2(\mu)$ denotes the inclusion map. Contrary to the inclusion maps of standard Sobolev spaces on $\R^d$, 
the map $\iota$ is compact. In fact, the compactness of this operator can be easily derived from the inequality
$$\int \rho^2 w^2 d\mu\leq 16\int \abs{w'}^2 d\mu$$
which follows from 
\begin{align*}
 0&\leq d\int w^2 d\mu=\int \frac{d}{d\rho}(\rho^d)e^{\rho^2/4}w^2 \,d\rho=-\int(w^2e^{\rho^2/4})'\rho^{d}\,d\rho\\
&= -2\int ww'\rho\, d\mu-\frac12\int w^2\rho^2\, d\mu\\
&\leq 2\big(\int w^2 \rho^2\,d\mu\big)^{1/2}\cdot\big(\int\abs{w'}^2\, d\mu\big)^{1/2}-\frac12\int w^2 \rho^2 \, d\mu,
\end{align*}
compare also \cite{Vazquez-Zuazua}.
The lemma follows since the inclusion map $L^2_{rad}(\mu)\hookrightarrow (H^1_{rad}(\mu))^*$ is of course continuous.
\end{proof}
As an immediate consequence of the above lemma we obtain that the spectrum of $A_0^{-1}$ contains at most countably many eigenvalues
 which cannot accumulate at any point different from zero. Therefore the spectrum of $A_0$ is discrete.
Finally we need to remark that since $A_1$ is bounded, Lemma \ref{lemma:komp} implies that $A_1$ is relatively compact with respect to $A_0$. 
Thus the essential spectra of $A=A_0+A_1$ and $A_0$ agree and are thus empty, see e.g. \cite{Reed-Simon}. \\

To establish Proposition \ref{Prop:EV} we need to analyse the individual eigenvalues. 

If $\lambda\in\R$ is an eigenvalue of $A$ and if $v_\lambda\in L_{rad}^2(\mu)$ is a corresponding eigenfunction then $v_\lambda$ solves the equation
\begin{equation}
\label{EF}
-v''-(\frac{d-1}{\rho}+\frac{\rho}{2})v'+\frac{k}{\rho^2}G'(\psi(\rho))v=E\cdot v
\end{equation}
for $E=\lambda$. 

The asymptotic behaviour of solutions of the above linear differential equation can be described by
\begin{lemma}
\label{asymptEF}
Let $\psi\in H^1_{rad}(\R^d)$ be any solution of \eqref{EX} and let $E\in \R$. 
\begin{enumerate}
 \item[(i)] Let  $\gamma_1<0<\gamma_2$ be the solutions of the equation $\gamma^2+(d-2)\gamma-kG'(\psi(0))=0$. 
Then there are solutions $\phi_1$ and $\phi_2$ of \eqref{EF} such that 
$$\lim_{\rho\to 0} (\phi_i(\rho)+\rho\cdot\phi_i'(\rho))\rho^{-\gamma_i}=1, \quad i=1,2.$$
\item[(ii)] There are solutions $\phi_3$ and $\phi_4$ of \eqref{EX} such that 
\begin{equation} \label{limit1}
\lim_{\rho\to \infty} (\abs{\phi_3(\rho)}+\abs{\rho^{-1}\cdot\phi_3'(\rho)})e^{\rho^2/4}\rho^{d-2E}=1
\end{equation}
respectively
\begin{equation}\label{limit2}
\lim_{\rho\to \infty} (\abs{\phi_4(\rho)}+\abs{\rho^{-1}\cdot\phi_4'(\rho)})\rho^{2E}=1
\end{equation}
\end{enumerate}
\end{lemma}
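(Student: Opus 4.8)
The plan is to regard \eqref{EF} as a linear second order ODE with a regular singular point at $\rho=0$ and an irregular singular point at $\rho=\infty$, and in each case to produce the required solutions by a perturbative construction: one isolates an explicitly solvable model equation, writes the true solution as the model solution times a correction $1+w$, and solves the equation for $w$ by a Neumann series for a Volterra integral equation (equivalently, by Levinson's asymptotic integration theorem applied to the associated first order system). Differentiating the integral identity yields the stated control of the first derivative at the same time. Throughout we use two facts established earlier: by Proposition~\ref{total} the value $\psi(0)$ is the coordinate of a pole or a minimal sphere, hence $kG'(\psi(0))>0$; and by Lemma~\ref{lemma2} we have $\psi(\rho)\to s^\star$ with $|\psi'(\rho)|\lesssim\rho^{-3}$, so that $G'(\psi(\rho))=G'(s^\star)+O(\rho^{-2})$ as $\rho\to\infty$.

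For (i), multiplying \eqref{EF} by $\rho^2$ shows that $\rho=0$ is a regular singular point with indicial equation $\gamma^2+(d-2)\gamma-kG'(\psi(0))=0$, whose roots are real and, since $kG'(\psi(0))>0$, satisfy $\gamma_1<0<\gamma_2$ (and in fact $\gamma_2$ is the exponent $\gamma$ attached to $\psi$ in Proposition~\ref{Prop6}). The model is the Euler equation $v''+\frac{d-1}{\rho}v'-\frac{kG'(\psi(0))}{\rho^2}v=0$, with fundamental system $\rho^{\gamma_1},\rho^{\gamma_2}$, and the remaining terms of \eqref{EF}, namely $\frac{\rho}{2}v'$, $Ev$ and $\frac{k}{\rho^2}\bigl(G'(\psi(\rho))-G'(\psi(0))\bigr)v$, are all subordinate at $\rho=0$; indeed the last of these is $O(\rho^{\gamma_2-2})$ because $\psi(\rho)-\psi(0)=O(\rho^{\gamma_2})$ by Proposition~\ref{Prop6}. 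Variation of parameters against $\rho^{\gamma_1},\rho^{\gamma_2}$ turns \eqref{EF} into a Volterra equation on $(0,\rho_0]$ based at $0$ whose kernel has small norm on a suitably weighted sup space once $\rho_0$ is small; Picard iteration converges and gives $\phi_i=\rho^{\gamma_i}(1+o(1))$ and $\phi_i'=\gamma_i\rho^{\gamma_i-1}(1+o(1))$, so that after fixing the free multiplicative constant $(\phi_i+\rho\phi_i')\rho^{-\gamma_i}\to 1$.

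For (ii), since the potential $\frac{k}{\rho^2}G'(\psi(\rho))$ is only $O(\rho^{-2})$, the dominant behaviour at infinity is dictated by the model $v''+\bigl(\frac{d-1}{\rho}+\frac{\rho}{2}\bigr)v'+Ev=0$; equivalently, the substitution $v=\rho^{-(d-1)/2}e^{-\rho^2/8}y$ removes the first order term and produces the perturbed parabolic cylinder equation $y''=\bigl(\frac{\rho^2}{16}-E+\frac{d}{4}+O(\rho^{-2})\bigr)y$. Its two asymptotic branches are found by inserting the ans\"atze $v\sim\rho^{-2E}$ and $v\sim e^{-\rho^2/4}\rho^{2E-d}$ and matching exponents; Abel's formula confirms that the Wronskian of the two is $\propto\rho^{1-d}e^{-\rho^2/4}$. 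To obtain genuine solutions one writes $\phi$ as the relevant model branch times a correction $1+w$ and solves the resulting Volterra equation, integrating from $+\infty$ for the exponentially recessive branch and with the usual growth-adapted splitting of the Green's function for the algebraic one; in either case the integrand picks up the factor $\rho^{-2}$ from the potential and decays like a negative power of $\rho$, so the Neumann series converges and $w=o(1)$. Differentiating the integral identity gives $\phi_4'\sim-2E\rho^{-2E-1}$ and $\phi_3'\sim-\tfrac{\rho}{2}\phi_3$, and after normalising one reads off \eqref{limit1} and \eqref{limit2}.

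The main difficulty I expect is the construction of the recessive solution $\phi_3$ at $\rho=\infty$: being exponentially smaller than $\phi_4$, it must be obtained by integrating the Volterra equation from $+\infty$, and one has to check that the perturbation, paired in the variation of parameters kernel with the large ratio $\phi_4/\phi_3\sim e^{\rho^2/4}$, is still integrable. It is: dividing by the Wronskian $\propto\rho^{1-d}e^{-\rho^2/4}$ reinserts a compensating $\rho^{d-1}e^{\rho^2/4}$, and the net integrand is $O(\rho^{-3})$, so that $\int_\rho^\infty$ of it is $O(\rho^{-2})\to 0$, yielding both convergence and the contraction. The analogous bookkeeping for $\phi_1$ near $\rho=0$ (whose exponent $\gamma_1$ may be very negative) is routine, and possible resonances $\gamma_2-\gamma_1\in\N$ are harmless, since a logarithmic correction $\rho^{\gamma_2}\log\rho$ is still $o(\rho^{\gamma_1})$.
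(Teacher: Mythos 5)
Your proof is correct and follows essentially the same route as the paper, which (after the changes of variable $\rho\mapsto e^{-s}$ near the origin and $\rho\mapsto\sqrt{s}$ near infinity) simply invokes Theorem~8.1 of Coddington--Levinson, i.e.\ Levinson's asymptotic integration theorem for perturbed first-order linear systems -- the very theorem you identify as the systematized form of your Volterra/Neumann-series argument. Your extra bookkeeping (indicial roots, Wronskian $\propto\rho^{1-d}e^{-\rho^2/4}$, integrability of the paired kernel for the recessive branch, using Lemma~\ref{lemma2} and Proposition~\ref{total} for the decay of $\psi'$ and the positivity of $kG'(\psi(0))$) matches what the paper implicitly delegates to the cited theorem.
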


One way to prove the above lemma is to study the asymptotics as $s\to \infty$ of the functions $s\mapsto (v(e^{-s}), \frac{d}{ds} v(e^{-s}))$, respectively of $s\mapsto (v(\sqrt{s}),  \frac{d}{ds} v(\sqrt{s})$. One can check that each of these functions satisfies a system of first order differential equations for which Theorem 8.1 of \cite{Cod-Lev} applies. The claimed asymptotics follow. 

Given any $E\in \R$ we let $v_E$ be the solution of \eqref{EF} that satisfies 
$$v_E(0)=0 \text{ and } \lim_{\rho\to 0}\rho^{-\gamma_2}(v_E(\rho)+\rho\cdot v_E'(\rho))=1$$
where $\gamma_2>0$ is the constant determined in Lemma \ref{asymptEF}.

Let us remark that $v_E$ is in general not an element of $H$. However, if $E$ is an eigenvalue of $A$ then $v_E\in H$ must be (a multiple of) the corresponding eigenmap since other solutions of \eqref{LIN} are not square integrable (with respect to $\mu$) near the origin and thus certainly not in $H$. 
The multiplicity of each eigenvalue is thus one. 

Furthermore we have the following connection between the properties of the solutions $v_E$, $E\in\R$, and the eigenvalues of $A$.
\begin{lemma}
\label{lemma:Sturm-Liou}
For every $E_0\in \R$ the number of eigenvalues 
$$n_{E_0}:=\#\{E<E_0:\, E\text{ eigenvalue of } A\}$$ that are less than $E_0$ 
coincides with the number 
$$N_{E_0}:=\#\{\rho>0:\, v_{E_0}(\rho)=0\}$$
of zeros of the function $v_{E_0}$ on $(0,\infty)$.
\end{lemma}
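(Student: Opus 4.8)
The plan is to prove Lemma~\ref{lemma:Sturm-Liou} as a (singular) Sturm--Liouville oscillation statement, using the shooting solution $v_E$ and the asymptotics recorded in Lemma~\ref{asymptEF}. First I would put the eigenvalue equation \eqref{EF} into divergence form by multiplying by $p(\rho):=\rho^{d-1}e^{\rho^2/4}$, turning it into $-(pv')'+kG'(\psi)\rho^{d-3}e^{\rho^2/4}\,v=E\,p\,v$, a Sturm--Liouville equation whose leading coefficient equals its weight. Introducing Prüfer variables $v=R\sin\theta$, $pv'=R\cos\theta$, the zeros of $v_E$ on $(0,\infty)$ are exactly the points where the continuous angle $\theta(\,\cdot\,,E)$ meets a multiple of $\pi$, and at each such point $\theta'=1/p>0$, so $\theta$ passes through every multiple of $\pi$ monotonically (never returning below it). The prescribed behaviour $v_E(0)=0$, $\rho^{-\gamma_2}(v_E+\rho v_E')\to1$ fixes the initial angle $\theta(0^+,E)=\theta_0\in(0,\pi)$ independently of $E$, since near $\rho=0$ the term $E\,p\,v$ is of lower order than $kG'(\psi(0))\rho^{d-3}v$ and the leading profile $v_E\sim c\,\rho^{\gamma_2}$ is $E$-blind. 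Thus $N_{E}$ is precisely the number of multiples of $\pi$ that $\theta(\,\cdot\,,E)$ crosses on $(0,\infty)$.

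I would then bring in two standard facts. Monotonicity: differentiating the Prüfer equation in $E$ gives $\partial_E\theta(\rho,E)>0$ for every $\rho>0$ (the forcing is $p\sin^2\theta\ge0$ with vanishing data), so $\theta(\rho,E)$ is strictly increasing in $E$ for each $\rho>0$ and hence $E\mapsto N_E$ is non-decreasing. Characterisation of the spectrum: by Lemma~\ref{asymptEF}~(ii) every solution of \eqref{EF} is, for large $\rho$, a combination $\alpha(E)\phi_3(\rho,E)+\beta(E)\phi_4(\rho,E)$ with $\abs{\phi_3}\sim e^{-\rho^2/4}\rho^{2E-d}$ and $\abs{\phi_4}\sim\rho^{-2E}$; since $\phi_4\notin L^2(\mu)$ while $\phi_3\in L^2(\mu)$, and since — as recalled just before the lemma — an eigenfunction of $A$ must be a multiple of $v_E$, we obtain that $E$ is an eigenvalue of $A$ if and only if $v_E\in L^2(\mu)$, i.e.\ if and only if $v_E$ is the ($\phi_3$-type) principal solution at $\infty$, i.e.\ if and only if $\beta(E)=0$.

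The heart of the argument is to show that $E\mapsto N_E$ is locally constant off the spectrum and jumps by exactly one across each eigenvalue. If $E^\star$ is not an eigenvalue then $\beta(E^\star)\ne0$, so by continuous dependence of the fundamental system $\{\phi_3,\phi_4\}$ on the parameter the tail of $v_E$ keeps a definite sign near $\infty$ for $E$ in a neighbourhood of $E^\star$; no zero escapes to or enters from $+\infty$, the finitely many zeros move continuously, and $N_E$ is constant near $E^\star$. If $E^\star$ is an eigenvalue then $\beta(E^\star)=0$, and one checks by a Wronskian computation that $\beta'(E^\star)$ is a nonzero multiple of $\norm{v_{E^\star}}_{L^2(\mu)}^2$, so $\beta$ has a simple sign change at $E^\star$; because $\phi_4$ dominates $\phi_3$ as $\rho\to\infty$, this means that as $E$ decreases to $E^\star$ from the side on which $N$ is larger, exactly one zero of $v_E$ is pushed out to $\rho=+\infty$, so that $N$ drops by exactly one, and by monotonicity this drop occurs on decreasing $E$, i.e.\ $N$ is left-continuous with $N_{E^\star}=\lim_{E\uparrow E^\star}N_E$, which is also the number of zeros of the eigenfunction $v_{E^\star}$. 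Finally, the base case: for $E\le E_0:=\min\operatorname{spec}(A)$ the function $v_E$ has no zero, for if $v_E(\rho_1)=0$ then $v_E\vert_{(0,\rho_1)}$ would exhibit $E$ as an eigenvalue of the Friedrichs realisation of the same operator on $(0,\rho_1)$ with Dirichlet condition at $\rho_1$, contradicting domain monotonicity of the bottom eigenvalue (the borderline case $E=E_0$ being excluded because the ground state of $A$ is of one sign). Combining the base case with the jump analysis, and with the fact (noted above) that $\operatorname{spec}(A)$ is discrete, bounded below and simple, so that $\operatorname{spec}(A)=\{E_0<E_1<E_2<\dots\}$, we get $N_{E_0}=\#\{j:\ E_j<E_0\}=n_{E_0}$ for every $E_0\in\R$, which is the assertion.

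I expect the main obstacle to be precisely this jump analysis at the singular endpoint $\rho=\infty$ — in particular making rigorous the statement that a zero ``crosses infinity'' exactly when the coefficient $\beta$ of the non-$L^2$ solution vanishes. This needs the $E$-dependence of the fundamental system $\{\phi_3,\phi_4\}$ at $\infty$ to be controlled (analyticity of $\alpha,\beta$ in $E$, and the identity $\beta'(E^\star)\propto\norm{v_{E^\star}}_{L^2(\mu)}^2$ at eigenvalues, obtainable from the resolvent of $A$ together with the simplicity of the spectrum), along with an easier but necessary check at $\rho=0$ that $\theta_0$ does not depend on $E$ and that $v_E$ lies in the form domain $H^1_{rad}(\mu)$ near the origin.
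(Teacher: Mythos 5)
Your approach is correct but genuinely different from the paper's. The paper proves $n_{E_0}\geq N_{E_0}$ by a min--max argument: it truncates $v_{E_0}$ between its successive zeros to produce an $N_{E_0}$-dimensional subspace of the form domain whose Rayleigh quotient is at most $E_0$, and then upgrades $\lambda_{N_{E_0}}\leq E_0$ to a strict inequality by subcontinuity of $E\mapsto N_E$. For the converse inequality $N_{E_0}\geq n_{E_0}$ the paper integrates the derivative of the weighted Wronskian $\rho^{d-1}e^{\rho^2/4}(v_E v_{E_0}'-v_E'v_{E_0})$ between consecutive zeros and over $(\rho_{N_{E_0}},\infty)$, using the decay of the eigenfunction $v_{E_0}$ from Lemma~\ref{asymptEF} to kill the boundary term at infinity; a sign clash forces an extra zero of $v_E$ for $E>E_0$ whenever $E_0$ is an eigenvalue. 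Your Pr\"ufer-angle formulation replaces both halves by a single dynamical picture: $\theta(0^+,E)=\pi/2$ is $E$-blind, $\partial_E\theta\geq 0$ gives monotonicity of $N_E$, eigenvalues are characterised as the zeros of the connection coefficient $\beta(E)$ of the non-$L^2(\mu)$ solution $\phi_4$, and the base case follows from domain monotonicity of the bottom eigenvalue. The paper's route is shorter and more self-contained because it never needs to track zeros near $\rho=\infty$ uniformly in $E$ off the spectrum; the one place it does control the tail (at an eigenvalue) is handled directly by the Wronskian identity, which is in effect the same computation you invoke to show $\beta'(E^\star)\propto\|v_{E^\star}\|_{L^2(\mu)}^2$. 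Your version buys a more transparent ``oscillation counts modes'' narrative, at the cost of having to make rigorous the uniform-in-$\rho$ parameter dependence of the fundamental system $\{\phi_3,\phi_4\}$ near $\infty$ (so that no zeros silently escape or enter when $\beta(E^\star)\neq 0$), a point you correctly flag but which is precisely what the paper's min--max step circumvents.
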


\begin{proof}
We use methods known from the theory of Sturm-Liouville operators as presented in chapter XIII.3 of \cite{Reed-Simon}.

Let us first recall that $A$ is bounded from below and that the eigenvalues 
$$\lambda_1<\lambda_2<...$$
are discrete and have multiplicity one. 

Let now $E_0\in\R$ be any fixed number and let $N_{E_0}$ and $n_{E_0}$ be defined as above. We first show that 
$$\lambda_{N_{E_0}}<E_0.$$
We denote by 
$$0=\rho_0<\rho_1<...<\rho_{N_{E_0}}$$
the zeros of $v_{E_0}$. It may now be easily checked that the functions 
$$\mathds{1}_{(\rho_{i-1},\rho_i)}\cdot v_{E_0}, \quad i=1,..,N_{E_0}$$
span a $N_{E_0}$ dimensional subspace of the form domain $H^1_{rad}(\mu)$ of $A$ on which 
$$\langle A v,v\rangle_{H}\geq E_0\norm{v}_{H}^2.$$
Consequently we find that $\lambda_{N_{E_0}}\leq E_0$.

Since the function $E\mapsto N_{E_0}$ is subcontinuous (compare e.g.~Lemma \ref{intersect}) we find that also the strict inequality $\lambda_{N{E_0}}<E_0$ is valid and thus that $$n_{E_0}\geq N_{E_0}.$$
On the other hand we show\\

\textbf{Claim:} The map $E\mapsto N_E$ is non-decreasing and if $E_0$ is an eigenvalue then $N_E\geq N_{E_0}+1$ for every $E>E_0$.

Remark that since all eigenvalues of $A$ have multiplicity one, this claim implies that
$$N_{E_0}\geq n_{E_0}$$ and thus concludes the proof of Lemma \ref{lemma:Sturm-Liou}.

\textit{Proof of claim.}
Let $E_0\in\R$ and let $0=\rho_0<\rho_1<..<\rho_{N_{E_0}}$ be the zeros of $v_{E_0}$. We show on the one hand that all functions $v_E$, $E>E_0$, have a zero in each interval $(\rho_{i-1},\rho_i)$. On the other hand we prove that if $E_0$ is an eigenvalue of $A$ then there is a further zero of $v_E$ in the interval $(\rho_{N_{E_0}},\infty)$. 

We begin by the proof of this second claim. So let $E_0$ be an eigenvalue of $A$ and assume that there exists some $E>E_0$ such that $v_E$ has no zero in $(\rho_{N_{E_0}},\infty)$. By symmetry it is enough to consider the case that $v_E>0$ and $v_{E_0}>0$ in this interval. We now consider the integral
$$I:=\int_{\rho_{N_{E_0}}}^\infty \frac{d}{d\rho}\big[(v_E\cdot v_{E_0}'- v_E'\cdot v_{E_0})e^{\rho^2/4}\rho^{d-1}\big] \,d\rho.$$
Since $v_{E_0}$ is an eigenfunction of $A$ and thus an element of $L^2(\mu)$, Lemma \ref{asymptEF} implies that for $\rho\geq 1$
$$\abs{v_{E_0}(\rho)}+\abs{\rho^{-1}v_{E_0}'(\rho)}\leq C\cdot e^{-\rho^2/4}\cdot \rho^{2E_0-d}.$$
Conversely all solutions of \eqref{EF}, and thus is particular $v_E$, are bounded by
$$\abs{v_{E}(\rho)}+\abs{\rho^{-1}v_{E}'(\rho)}\leq C\cdot \rho^{-2E}$$
as $\rho\to\infty$.
We thus find that
$$\abs{(v_E(\rho)v_{E_0}'(\rho)-v_E'(\rho)v_{E_0}(\rho))e^{\rho^2/4}\rho^{d-1}}\leq C\rho^{2(E_0-E)}\underset{\rho\to \infty}{\to} 0$$
and therefore
$$I=-v_E(\rho_{N_{E_0}})v_{E_0}'(\rho_{N_{E_0}})\exp(\rho_{N_{E_0}}^2/4)\rho_{N_{E_0}}^{d-1}<0.$$
On the other hand $v_E$ is a solution of \eqref{EF} and thus a short calculation shows that 
$$
I=\int_{\rho_{N_{E_0}}}^\infty (E-E_0)v_E v_{E_0} d\mu>0$$
which leads to a contradiction.

The same argument applied on the intervals $(\rho_{i-1},\rho_i)$, $i=1,..,N_{E_0}-1$, shows that $v_E$ has a zero in the intervals $(\rho_{i-1},\rho_i)$. The assumption that $E_0$ is an eigenvalue is not needed for this part of the proof since we are integrating over compact intervals and thus do not need to control the asymptotics of $v_{E_0}$ as $\rho\to \infty$.

This concludes the proof of the above claim and thus of Lemma \ref{lemma:Sturm-Liou}.
\end{proof}

In order to establish Proposition \ref{Prop:EV} we finally need to understand how the solution $v_1$ of \eqref{EF} for $E=1$ is connected with the function $\psi$ representing the original selfsimilar solution of the harmonic map flow. 

Since the harmonic map flow is invariant under translations, the maps 
$$u_\eps(x,t):=u(x,t+\eps)=R_\chi \psi(\frac{x}{\sqrt{t+\eps}})$$
are solutions of \eqref{HF} on $\R^d\times (-\eps,\infty)$ for every $\eps\in\R$. 

Consequently 
$$\left.\frac{d}{d\eps}\big(\psi(\frac{x}{\sqrt{t+\eps}})\big) \right\vert_{\eps=0}=-\frac1{2t}\cdot\frac{x}{\sqrt{t}} \psi(\frac{x}{\sqrt{t}})$$
solves the linearised equation \eqref{LIN}. Working in selfsimilar coordinates, we thus find that the function 
$$\rho\mapsto \rho\cdot \psi'(\rho)$$ solves equation \eqref{EF} for $E=1$.

Since $\rho\psi'(\rho)=O(\rho^{\gamma_2})$ as $\rho\to 0$ for the constant $\gamma_2>0$ of Lemma \ref{asymptEF} the function $v_1$ is equal to (a multiple of) $r\psi'(r)$. The number of zeros of $v_1$ is thus given by the number of local extrema of $\psi$.

This concludes the proof of Proposition \ref{Prop:EV} and thus of our final result Theorem \ref{thm:lin-stab}.
\end{proof}
Finally we would like to remark that the number of local extrema of a solution \eqref{EX} coincides with the number of times this solution intersects the level of the equator. Thus a selfsimilar solution of the harmonic map flow enjoys the stability property of Theorem \ref{thm:lin-stab} (i), if and only if it does not cross the equator.
\bibliography{references}
\bibliographystyle{plain}	
\end{document}